\numberwithin{equation}{section}
\numberwithin{figure}{section}
\newcommand{\ab}{{\operatorname{ab}}}
\newcommand{\calL}{{\mathcal L}}
\newcommand{\calA}{{\mathcal A}}
\newcommand{\Z}{{\mathbb Z}}
\newcommand{\F}{{\mathbb F}}
\newcommand{\ord}{\text{ord}}
\newcommand{\calS}{{\mathcal S}}
\newcommand{\pp}{{\mathfrak p}}
\newtheorem*{theorem*}{Theorem}
\newtheorem*{conjecture*}{Conjecture}
\newtheorem{conjecture}[equation]{Conjecture}
\newtheorem*{question*}{Question}
\newtheorem{theorem}[equation]{Theorem}
\newtheorem{lemma}[equation]{Lemma}
\newtheorem{corollary}[equation]{Corollary}
\newtheorem{proposition}[equation]{Proposition}
\newtheorem*{mainthm}{Theorem \ref{thm:main}}
\theoremstyle{definition}
\newtheorem{definition}[equation]{Definition}
\newtheorem{question}[equation]{Question}
\newtheorem{remark}[equation]{Remark}
\theoremstyle{remark}
\def\a{\mathfrak{a}}
\def\d{\mathfrak{d}}
\def\e{\varepsilon}
\newcommand{\oo}{\mathcal{O}}
\newcommand{\Q}{\mathbb{Q}}
\newcommand{\Qbar}{\overline{\mathbb{Q}}}
\newcommand{\rank}{\mathrm{rank}\;}
\def\image{\mathrm{image}}
\title{Defining $\Z$ using unit groups}
\author{Barry Mazur}
\address{Department of Mathematics\\ Harvard University \\ Cambridge, MA 02138-2901}
\email{mazur@g.harvard.edu}
\urladdr{http://www.math.harvard.edu/~mazur}
\author{Karl Rubin}
\address{Department of Mathematics\\
UC Irvine\\
Irvine, CA 92697, 
USA}
\email{krubin@uci.edu}
\urladdr{https://math.uci.edu/~krubin}
\author{Alexandra Shlapentokh}
\address{Department of Mathematics \\ East Carolina University \\ Greenville, NC 27858}
\email{shlapentokha@ecu.edu }
\urladdr{myweb.ecu.edu/shlapentokha}
\subjclass[2020]{Primary 11U05} \keywords{Hilbert's Tenth
Problem, Diophantine definition, First-order definition}
\dedicatory{To Henryk Iwaniec on his 75th birthday, with admiration}
\begin{document}

\begin{abstract}
We consider first-order definability and decidability questions over 
rings of integers of algebraic extensions of $\Q$, paying attention 
to  the uniformity of definitions.   
The uniformity follows from the simplicity of our first-order definition of $\Z$.
Namely, we prove that for a large collection of algebraic extensions $K/\Q$,
$$
\{x \in \oo_K : \text{$\forall \e \in \oo_K^\times \;\exists \delta \in \oo_K^\times$ 
such that $\delta-1 \equiv (\e-1)x \pmod{(\e-1)^2}$}\} = \Z
$$
where $\oo_K$ denotes the ring of integers of $K$.  One of the corollaries of our results is undecidability of the field of constructible numbers, a question posed by Tarski in 1948.
\end{abstract}

\thanks{The authors would like to thank Caleb Springer for his careful reading of their paper and the anonymous Referee for very helpful comments and for pointing out the implication of our results for a question of Tarski.\\
The research for this paper was partially supported by DMS grants 2152098 (AS), 2152149 (BM), 2152262 (KR).
AS was also partially supported by an ECU Creative Activity and Research Grant during the summer of 2022.}

\maketitle
\setcounter{tocdepth}{1}

\tableofcontents

\section{Introduction}
We consider first-order definability and decidability questions over rings and fields of integers of algebraic  extensions of $\Q$, paying attention to  the uniformity of the manner in which definability is established.  

Let $K$ be an algebraic, possibly infinite, extension of $\Q$ and let $\oo_{K}$ be the ring of integers of $K$. 

\begin{question}
\label{q1}
Is there a first-order definition of $\Z$ over $\oo_{K}$ in the language of rings?
\end{question}

\begin{question}
\label{q2}
Is the first-order theory of $\oo_{K}$ in the language of rings undecidable?
\end{question}

\begin{question}
\label{q3}
Is the first-order theory of $K$ in the language of rings undecidable?
\end{question}

It is well-known that if $\oo_K$ is definable over $K$ and the first-order theory of $\oo_K$ is undecidable, then the first-order theory of $K$ is undecidable.  In this paper we concentrate on the first-order theory of rings of integers of infinite algebraic extensions of $\Q$ and then use some existing results on definability of the rings of integers of these fields over the fields to obtain undecidability results for fields.  

In particular, we answer a question of A. Tarski concerning (un)decidability of the first-order theory of the field of constructible numbers (see \cite{Tar48}, Notes \#10 and see Corollary \ref{cor:Tarski} below) and a question posed by C. Martinez-Ranero, J. Utreras and C. Videla concerning undecidability of $K^{(d)}$, the compositum of all extensions of degree less or equal to a fixed integer $d$ of a number field $K$ (see \cite{MarUtrVid}).

\subsection{The language of rings}
The first-order language $(0,1,+, \times)$ of rings is essentially the language of polynomial equations. For a fixed ring $R$, a sentence in this language can be shown to be equivalent to one that has the form:
\begin{equation}
\label{eq:sent}
E_1x_1\ldots E_rx_r \bigwedge_{i=1}^{\ell}P_i(x_1,\ldots,x_r)=0, 
\end{equation}
where each $E_i$ is either the universal or the existential quantifier ranging over $R$ and each $P_i$ is a polynomial with coefficients in $\Z$.

The first-order theory of a countable ring $R$ is the set of all sentences 
of the form \eqref{eq:sent} that are true over $R$.

   Using this language we can define subsets of our ring:
\begin{equation}
\label{eq:def}
A=\{a \in R|E_1x_1\ldots E_rx_r \bigwedge_{i=1}^{\ell} P_i(a,x_1,\ldots,x_r)=0\}, 
\end{equation}
where as above each $E_i$ is either the universal or the existential quantifier 
ranging over $R$ and $P_i$ are polynomials over $\Z$.  That is,
an element $a \in R$ is in $A$ if and only if the sentence 
$E_1x_1\ldots E_rx_r \bigwedge_{i=1}^{\ell}P_i(a,x_1,\ldots,x_r)=0$ is true over $R$. 
If \eqref{eq:def} holds for a set $A$ we say that the set $A$ is {\em (first-order)  definable over $R$}.

 If the sentence \ref{eq:sent}  has existential quantifiers only, then it is in what is called {\em the existential language of rings} and  we say that the set $A$ satisfying \ref{eq:def}  is {\em existentially  definable over $R$}.   The problem of deciding whether a sentence is true in this language over a ring $R$ is essentially Hilbert's Tenth Problem for the ring.

Sometimes we expand the language of rings by adding countably many constant symbols to allow the coefficients of the polynomials to be in $R$. 

Given two rings $R_1 \subset R_2$ such that $R_1$ has an undecidable first-order theory and has a first-order definition over $R_2$ as in \eqref{eq:def}, we can conclude that the first-order theory of $R_2$ is undecidable.   For, if $R_2$ were decidable, combining the algorithm giving first-order decidability in $R_2$ with the first-order definition of $R_1$ over $R_2$ would then give us an algorithm for decidability of $R_1$,   a contradiction.  

Applying this reasoning wiith $R_1 = \Z$ shows that if the answer to Question \ref{q1} above is ``yes'', then the answer to Question \ref{q2} is ``yes'' as well. 

\subsection{Uniform definitions}

\begin{definition}[Uniform definitions over rings of algebraic integers]
\label{def:uni}
Let $K$ be a field of algebraic numbers, $\calL$  a collection of extensions 
of $K$ in the algebraic closure ${\bar K}$, and $A$ a subset of $\oo_{\bar K}$. 
We say that {\em $A$ has a uniform first-order definition over rings of integers of 
all fields in $\calL$}  if there is a single polynomial 
$P(t,\bar x) \in \oo_{K}[t,\bar x]$  in $1+r$ variables 
($t$ and ${\bar x} = (x_1,x_2,\dots x_r)$, for some $r \in \Z_{\ge 1}$) 
and  a first-order formula 
\[
Q(t):=E_1x_1\ldots E_rx_rP(t,\bar x), 
\] 
where $E_i$ is either a universal or an existential quantifier and $t$ is the only free variable, 
such that $Q(t)$ is a first-order definition of $A \cap \oo_L$  over $\oo_L$ for every $L\in \calL$. 
\end{definition}

\begin{remark} 
Since $\Z$ is existentially undecidable, if $\Z$ has a uniform first-order 
definition over rings of integers of  all fields in $\calL$, then we have 
a single formula giving us  a ``uniform'' way of showing that these rings 
of integers are first-order undecidable.  
\end{remark}

\subsection{Brief history}
The first result showing that a first-order theory of a ring is undecidable 
was due to J.B. Rosser \cite{Rosser} building on results of A. Church \cite{Church} and K. G\"{o}del \cite{G}.  
The ring in question was $\Z$.  J. Robinson \cite{Rob1} produced a definition 
of $\Z$ for the rings of integers of every number field, thus showing that 
the first-order theory of these rings is undecidable.  Her definition was of 
a simple form $\forall\exists\ldots \exists$ but depended explicitly on the 
degree of the extension.  Later in \cite{Rob3} she produced a uniform 
definition of $\Z$ across all rings of integers  of number fields, but the 
uniform formula was much more complicated. (See Definition \ref{def:uni} 
for our interpretation of the notion of uniformity.)  
In this paper we construct a definition of $\Z$, without parameters, over the 
rings of integers of all number fields and some classes of infinite algebraic extensions of $\Q$. This definition has the form $\forall\forall\exists\ldots \exists$.

There are of course existential definitions of $\Z$ 
over rings of integers of some number fields and some infinite algebraic 
extensions of $\Q$ (see, for example, \cite{Den1}, \cite{Den2}, \cite{Den3}, 
\cite{Ph1}, \cite{Sha-Sh},  \cite{Sh2}, \cite{Vid89}, \cite{CPZ}, 
\cite{Sh36}, \cite{Sh33}, \cite{Sh37}, \cite{GFPast}, \cite{MRSh22}) and 
some well-known conjectures imply that $\Z$ has an existential definition 
over rings of integers of every number field (see \cite{MR}, \cite{MP18}, \cite{Pas22}).  
However, these definitions are not uniform across all number fields, and by 
Corollary \ref{thm:nonu} they cannot be.

While we now have a pretty good understanding of the first-order theory of rings of integers of number fields, the situation is quite different when it comes to infinite algebraic extensions of $\Q$.  For example there are infinite extensions whose ring of integers has decidable first-order theory (\cite{Rum1} and \cite{Dries4}), and other infinite extensions whose ring of integers has undecidable first-order theory. 

J. Robinson was the first person to produce examples of infinite algebraic extensions of $\Q$ where the first-order theory of the ring of integers is undecidable. She developed a method for constructing a first-order definition of $\Z$ for rings of totally real integers and a method for constructing  a first-order model of $\Z$ and thus proving undecidability for some rings of integers that are not totally real.  See for example \cite{V3}, \cite{VidVid1}, \cite{VidVid2}, \cite{GilRan}, \cite{CasVidVid}, \cite{MarUtrVid}, \cite{Sp}, \cite{ChecFehm},  \cite{Sp23} for results using J. Robinson's method applied to rings of totally real integers and rings of integers of extensions of degree 2 of totally real fields.  

The first important results showing that rings of integers of some infinite algebraic extensions of $\Q$ are first-order definable over their fields of fractions are due to C. Videla (see \cite{V2}, \cite{V3}).  These results were generalized by the third author in \cite{Sh40}.
\subsection{Totally real algebraic numbers versus totally real algebraic integers}
 The rings of integers and their fields of fractions do not always behave in the same fashion with respect to decidability/undecidability of their first-order theories.  A remarkable result  due to M. Fried, D. Haran and H. V{\"o}lklein in \cite{Fried3} shows that the first-order theory of the field of all totally real algebraic numbers is decidable. This field constitutes a boundary of sorts between the ``decidable'' and ``undecidable'', since J. Robinson showed in \cite{Rob3} that $\Z$ is first-order definable over the ring of all totally real integers and therefore the first-order theory of the ring of all totally real integers is undecidable. 

\subsection{ Non-big fields of algebraic numbers}
Our formula for defining $\Z$ is not dependent on whether the ring in question is real or not.  Rather it depends on whether the field in question is ``big'' or not (see \cite{MR20}).

\begin{definition}
\label{big} 
Let $K$ be a field of algebraic numbers.  
We say that $K$ is {\em  big } if 
$$
[K :{\mathbb  Q}] = \prod_{p\ {\rm prime}} p^\infty.
$$  
Equivalently,
$K$ is big if for every positive integer $n$, $K$ contains a number field $F$ 
with $[F : {\Q}]$ divisible by $n$.
\end{definition}

Our main theorem, which will be proved in \S\ref{mainpf}, is the following.

\begin{mainthm}
Let $\calA$ be the collection of all non-big fields of algebraic numbers.
There exists a first-order formula of the form 
``$\forall \forall\exists \ldots \exists$'' uniformly defining $\Z$ over the 
rings of integers of all fields in $\calA$.  
In particular the first-order theory of these rings can be uniformly shown to be first-order undecidable.
\end{mainthm}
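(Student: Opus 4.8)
The plan is to analyze the candidate set
\[
D_K := \{x \in \oo_K : \forall\, \e \in \oo_K^\times\ \exists\, \delta \in \oo_K^\times \text{ with } \delta - 1 \equiv (\e-1)x \pmod{(\e-1)^2}\}
\]
and to prove $D_K = \Z$ for every non-big $K \in \calA$. The inclusion $\Z \subseteq D_K$ should be the easy direction: given $x = n \in \Z$ and a unit $\e$, one wants a unit $\delta$ with $\delta \equiv 1 + n(\e-1) \pmod{(\e-1)^2}$. The natural guess is $\delta = \e^n$, since $\e^n = (1 + (\e-1))^n \equiv 1 + n(\e-1) \pmod{(\e-1)^2}$ by the binomial theorem, and $\e^n \in \oo_K^\times$. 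This works uniformly and without any hypothesis on $K$, so $\Z \subseteq D_K$ always.

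The substance is the reverse inclusion $D_K \subseteq \Z$. First I would reduce to a local statement: $x \in \oo_K$ lies in $\Z$ if and only if, for every prime $p$, the image of $x$ in $\oo_K \tensor \Z_p$ (or in suitable completions/localizations) lies in $\Z_p$ — this uses that $\Z = \bigcap_p \Z_p \cap \Q$ together with the fact that $x$ is an algebraic integer. For a fixed prime $p$, I would look for a unit $\e$ that is a principal unit at some prime $\pp \mid p$ of a number subfield $F \subseteq K$, i.e. $\e \equiv 1 \pmod{\pp}$, so that $\e - 1$ has positive $\pp$-valuation; the congruence $\delta - 1 \equiv (\e-1)x \pmod{(\e-1)^2}$ then forces, upon taking $\pp$-adic logarithms of the principal units $\delta$ and $\e$, a relation of the shape $\log\delta \equiv x\log\e \pmod{(\e-1)}$, pinning down $x \bmod \pp^{k}$ for $k$ growing with the valuation of $\e-1$. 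Producing such units is where the hypothesis enters: because $K$ is non-big, there is a prime $\ell$ and a bound on the power of $\ell$ dividing $[F:\Q]$ for number fields $F \subseteq K$, hence (by ramification/splitting control, e.g. via cyclotomic or Kummer extensions and the structure of local unit groups) one can find, for infinitely many primes $p$, subfields $F$ with a prime $\pp \mid p$ whose local unit group $\oo_{F_\pp}^\times$ contains principal units of arbitrarily large "depth" relative to the residue characteristic; running the congruence for such $\e$ forces $x$ to be congruent to a rational integer modulo arbitrarily high powers of $p$, and doing this for enough $p$ forces $x \in \Z$.

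The main obstacle I expect is exactly the construction of a rich enough supply of units in number subfields of a non-big field, uniformly enough that a single first-order formula does the job: one must leverage the non-bigness hypothesis to guarantee, for each prime $p$ we wish to control, the existence of a number field $F \subseteq K$ and a prime of $F$ above $p$ together with a principal unit $\e \in \oo_F^\times$ whose $\pp$-adic behaviour is strong enough to pin down $x$, and one must be careful that the argument degenerates correctly at the finitely many "bad" primes dividing $[K:\Q]^{\infty}$ (where the quantifier over units may be too weak), so that those primes do not actually occur for non-big $K$. I would handle this by an explicit choice — e.g. take $\e$ from a unit in $\Z[\zeta_{p^m}]^+ \cap F$ or from $S$-units / fundamental units whose valuations at primes above $p$ are understood — and then verify that the induced congruences on $x$, as $p$ and $m$ vary over the allowed range dictated by non-bigness, intersect down to $\Z$. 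The passage from "$x$ is $p$-adically rational for all $p$" to "$x \in \Z$" is then immediate since $x$ is integral and $K \cap \Q_p$-rationality at all $p$ forces $x \in \Q$, hence $x \in \Z$. Finally, undecidability follows formally: the displayed formula is of the form $\forall\forall\exists\cdots\exists$ with integer coefficients and no parameters, it defines $\Z$ in every $\oo_L$ with $L \in \calA$, and since the first-order theory of $\Z$ is undecidable (Hilbert–Bernays, Rosser), the reasoning recalled in the introduction gives that each such $\oo_L$ has undecidable first-order theory, uniformly via this single formula.
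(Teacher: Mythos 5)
Your set $D_K$ is exactly the set the paper calls $R_K$, and your easy direction ($\Z \subseteq D_K$ via $\delta = \e^n$) is correct and matches the paper. But the reverse inclusion $D_K \subseteq \Z$ is \emph{false} in general for non-big $K$, so the formula you propose does not by itself define $\Z$. The paper computes explicitly (Proposition 3.1) that for an imaginary quadratic field $K$, one has $R_K = \Z + 2\oo_K \ne \Z$: the unit group is finite, so the quantifier ``$\forall \e$'' ranges over only roots of unity and the set $R_K$ is much larger than $\Z$. More generally, Theorem \ref{cor:oddp} only proves that for non-big $K$, either $R_K = \Z$ or $R_K$ is an order in an imaginary quadratic subfield of $K$; the paper does not prove $R_K = \Z$ in the latter case (it is stated only as a conjecture). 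This is precisely why the paper's actual defining formula is \emph{not} the bare congruence you wrote, but that congruence conjoined with an auxiliary existential system $\calS$ (built from units of a real quadratic extension $K(\sqrt{d})$; see Lemma \ref{prel}) whose role is exactly to exclude non-rational elements of an imaginary quadratic order. Your proposal is missing this entire component, which is the main technical device that makes the single uniform formula work across all of $\calA$.

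There is also a gap in your sketch of the reverse inclusion even in the cases where $R_K = \Z$ does hold. You propose to pin $x$ down $p$-adically for ``infinitely many primes $p$'' by locating principal units of high depth at primes above $p$. But the hypothesis that $K$ is non-big gives you control at a \emph{single} prime $\ell$ (the one for which $\ord_\ell[K:\Q]$ is finite), not at infinitely many primes; you have no control over ramification or the supply of deep principal units at other primes. The paper's argument (Lemmas \ref{le:notnew} and \ref{cor:notthere}) works entirely at that one prime $\ell$: it constructs a specific $x \in \oo_K$ for which the required $\delta$ cannot exist, and then uses the norm map $N_{K(\delta)/K}$ (Lemma \ref{norm}) to descend from the infinite field $K$ to a fixed number field $K$, exploiting the uniform bound on $\ord_\ell[L:K]$ to keep the $\ell$-adic error controlled. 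This descent-by-norm step is essential for the argument to make sense over an infinite extension and is absent from your outline. Your ``$p$-adic logarithm'' heuristic, even if it could be made rigorous over a fixed number field with infinite unit group, does not address how to use non-bigness to handle all of $K$ at once, nor why the finitely-many-units case (imaginary quadratic) must be treated separately.
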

\begin{definition}
Let $\{F_i\}, i \in \Z_{>0}$ be a sequence of fields such that $F_i \subset F_{i+1}$ and $F_{i+1}/F_i$ is Galois.  Then we will call the sequence $\{F_i\}$ a {\it  tower of Galois extensions}.

\end{definition}
\begin{corollary}
The following fields have an undecidable first-order theory.
\begin{enumerate}
\item A non-big union of a tower of Galois extensions of a number field. 
\item A non-big Galois extension of a number field. 
\item A union of a tower of extensions of degree less or equal to $d \in \Z_{>1}$.
\item A compositum of all extensions of degree less or equal to $d \in \Z_{>1}$ of a number field. 
\item The closure of a number field under taking of $n$-th roots or under taking $2n$-th roots of positive elements.
\end{enumerate}
\end{corollary}

\begin{proof}
If $M$ is a tower of Galois extensions  (or a Galois extension) of a number field $K$ and is not big, then for some prime $p$ the field $M$ is uniformly $p$-bounded 
in the sense of \cite{Sh40}, so $\oo_M$ has a first-order definition 
over $M$. (See \cite{Sh40} for the proof and note that in that paper the 
term ``$q$-bounded'' is used in place of ``$p$-bounded''.)
By Theorem \ref{thm:main}, we have that $\Z$ is first-order definable over $\oo_M$. 
So $\Z$ is first-order definable over $M$ and 
therefore the first-order theory of $M$ is undecidable. 

In the case of the last three assertions of the corollary, the field in question is clearly non-big and uniformly $p$-bounded for any $p>d$ in the case of the compositum or a tower of extensions of degree less or equal to $d$ and for any $p>2n$ in the case of the closure under taking $n$-th roots or $2n$-th roots.  The rest of the argument is the same as for the first two assertions of the corollary.

\end{proof}

The last item in the corollary implies the following corollary answering a question of Tarski.
\begin{corollary}
\label{cor:Tarski}
The first-order theory of the field of constructible numbers is undecidable.
\end{corollary}
\begin{proof}
The field of constructible numbers is smallest subfield of the field of algebraic numbers that is closed under the operation of taking a square root of  positive elements of the field.

\end{proof}

Our methods also provide a potential approach to proving undecidability 
of the ring of integers of the maximal abelian extension of $\Q$ 
(see Corollary \ref{c1}).
\subsection{Using units}
One of the main ideas behind our use of units is a simple algebraic identity $\frac{x^n-1}{x-1} \equiv n \bmod (x-1)$ in $\Z[x]$ for $x \ne 1$.  If $x$ is  a unit of a ring of integers, then so is $x^n$ leading to our Definition \ref{def:1}.

The first use of units exploiting this algebraic identity for the purposes of (existential) definability in the form of Pell equation belongs to J. Robinson and M. Davis (see \cite{Dav73}). The Pell equation over $\Z$ is after all just a norm equation of the units of a quadratic extension.  Later, J. Denef and L. Lipshitz  adapted this idea for algebraic extensions (see \cite{Den1}, \cite{Den3},\cite{Den2}).  Following their lead T. Pheidas (\cite{Ph1}), C. Videla (\cite{Vid89}) and the third author (\cite{Sh2}) used this particular unit method for another collection of number fields.  The third author used units, $S$-units and more generic norm equations of units to define $\Z$ over big subrings of number fields and infinite algebraic extensions of $\Q$ (see for example \cite{Sh94}, \cite{Sh02}, \cite{Sh04}).

C. Martinez-Ranero, J. Utreras and C. Videla in \cite{MarUtrVid} and C. Springer in \cite{Sp} and \cite{Sp23} also make use of units of the field under consideration but their methods are entirely different from ours.

\section{First-order passage from submonoids to subrings}

For any algebraic extension $K$ of $\Q$ let $\oo_K$ denote the ring of integers of $K$ 
and $U_K := \oo_K^\times$ the group of units of $\oo_K$.

\begin{definition}
\label{def:1}
Let $K$ be a field of algebraic numbers, and $M\subset \oo_{K}$ a multiplicative monoid.   
Define
\begin{align*}
R_{K, M}:=&\{x \in \oo_{K} : \text{$\forall \e \in M \; \exists \delta \in M \; 
\exists w \in \oo_{K}$ such that $\delta-1 = (\e-1)x + (\e-1)^2w$}\} \\
=&\{x \in \oo_{K} : \text{$\forall \e \in M \; \exists \delta \in M$ 
such that $\delta-1 \equiv (\e-1)x \pmod{(\e-1)^2}$}\}.
\end{align*}
When $M = U_K$ we will denote $R_{K,U_K}$ simply by $R_K$.
\end{definition} 

\begin{lemma}
\label{sumform}
Suppose $\e, x_1, x_2,\ldots,x_n, \delta_1,\ldots,\delta_n \in \oo_K$ 
for some $n \ge 1$, and for every $i$ 
$$
\delta_i-1 \equiv (\e-1)x_i \pmod{(\e-1)^2}.
$$
Then 
$$
\prod_i\delta_i - 1 \equiv (\e-1)\sum_i x_i \pmod{(\e-1)^2}.
$$
\end{lemma}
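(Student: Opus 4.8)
The plan is to prove this by induction on $n$, where the base case $n=1$ is exactly the hypothesis. So the real content is the inductive step, which reduces immediately to the case $n=2$: given $\delta_1 - 1 \equiv (\e-1)x_1$ and $\delta_2 - 1 \equiv (\e-1)x_2$ modulo $(\e-1)^2$, I want to show $\delta_1\delta_2 - 1 \equiv (\e-1)(x_1+x_2)$ modulo $(\e-1)^2$.

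First I would write $\delta_1 = 1 + (\e-1)x_1 + (\e-1)^2 a$ and $\delta_2 = 1 + (\e-1)x_2 + (\e-1)^2 b$ for some $a, b \in \oo_K$, using the definition of congruence modulo $(\e-1)^2$. Then I would expand the product $\delta_1\delta_2$ directly. The key observation is that all cross terms involving $x_1 x_2$, $x_1 b$, $x_2 a$, $ab$, etc., carry a factor of $(\e-1)^2$ (or higher), so they vanish modulo $(\e-1)^2$. What survives is $1 + (\e-1)(x_1 + x_2) + (\e-1)^2(\text{stuff})$, which is precisely the desired congruence. This is a routine computation with no genuine obstacle.

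For the general inductive step, I would apply the $n=2$ case to $\delta' := \prod_{i=1}^{n-1}\delta_i$ and $\delta_n$: by the inductive hypothesis $\delta' - 1 \equiv (\e-1)\sum_{i=1}^{n-1} x_i \pmod{(\e-1)^2}$, and then combining with $\delta_n - 1 \equiv (\e-1)x_n$ via the two-factor case gives $\delta'\delta_n - 1 \equiv (\e-1)\sum_{i=1}^{n} x_i \pmod{(\e-1)^2}$, completing the induction.

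There is essentially no hard part here; the lemma is a formal consequence of multiplying out truncated expansions and discarding terms divisible by $(\e-1)^2$. If one wants to avoid explicitly introducing $a$ and $b$, an alternative is to note the identity $\delta_1\delta_2 - 1 = (\delta_1 - 1) + (\delta_2 - 1) + (\delta_1 - 1)(\delta_2 - 1)$, observe that the last term lies in $(\e-1)^2\oo_K$ since each factor lies in $(\e-1)\oo_K$, and then add the two given congruences. This phrasing makes the $n=2$ case a one-line argument, after which the induction proceeds as above.
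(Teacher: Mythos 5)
Your proof is correct and matches the paper's argument: the paper likewise reduces to $n=2$ by induction and uses exactly the identity $\delta_1\delta_2 - 1 = (\delta_1-1)+(\delta_2-1)+(\delta_1-1)(\delta_2-1)$ together with the observation that $(\delta_1-1)(\delta_2-1) \in (\e-1)^2\oo_K$. The explicit expansion with $a,b$ that you offer first is just a notational variant of the same computation.
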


\begin{proof}
By induction it is enough to prove the lemma for $n = 2$.  
Since $\delta_i-1 \equiv 0 \pmod{(\e-1)}$, when $n = 2$ the lemma follows from 
the congruence
\begin{multline*}
\delta_1\delta_2 - 1 = (\delta_1 - 1) + (\delta_2 - 1) + (\delta_1 - 1)(\delta_2 - 1) \\
   \equiv (\delta_1 - 1) + (\delta_2 - 1) \equiv (\e-1)x_1 +(\e-1)x_2 \pmod{(\e-1)^2}.
\end{multline*}
\end{proof}

\begin{proposition}
\label{prop:isaring} 
Let $M$ be a monoid in $\oo_{K}$, and let $\Q(M)$ be the field generated over $\Q$ by $M$.  Then:
\begin{enumerate} 
\item 
$R_{K, M}$ is closed under addition and multiplication  (i.e., is a `semi-ring').
\item   
If  $M$ is a subgroup of $U_{K}$ then  $R_{K, M}$ is a subring of $\oo_{K}$.
\item 
 If $M$ contains a unit of infinite order, then $R_{K, M} \subset \oo_{\Q(M)}$.
\end{enumerate}
\end{proposition}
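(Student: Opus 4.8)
The plan is to handle the three parts in order, building on Lemma~\ref{sumform} for part~(1).

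For part~(1), suppose $x_1, x_2 \in R_{K,M}$ and fix $\e \in M$. Choose $\delta_1, \delta_2 \in M$ with $\delta_i - 1 \equiv (\e-1)x_i \pmod{(\e-1)^2}$. Then $\delta_1\delta_2 \in M$ since $M$ is a monoid, and Lemma~\ref{sumform} (with $n=2$) gives $\delta_1\delta_2 - 1 \equiv (\e-1)(x_1+x_2) \pmod{(\e-1)^2}$, so $x_1 + x_2 \in R_{K,M}$. For multiplication, the natural move is to iterate: to witness $x_1 x_2$, one wants to run the defining condition for $x_1$ using $\delta_2$ (the witness for $x_2$) in place of $\e$. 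Concretely, given $\e$, first pick $\delta \in M$ with $\delta - 1 \equiv (\e-1)x_2 \pmod{(\e-1)^2}$; then since $x_1 \in R_{K,M}$ and $\delta \in M$, pick $\eta \in M$ with $\eta - 1 \equiv (\delta-1)x_1 \pmod{(\delta-1)^2}$. Now $(\delta - 1) \equiv (\e-1)x_2$ and $(\delta-1)^2 \equiv 0 \pmod{(\e-1)^2}$, so $\eta - 1 \equiv (\e-1)x_1 x_2 \pmod{(\e-1)^2}$, giving $x_1 x_2 \in R_{K,M}$. This also shows $0, 1 \in R_{K,M}$ (take $\delta = 1$, resp. $\delta = \e$), so $R_{K,M}$ is a semiring containing $\Z_{\ge 0}$.

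For part~(2), when $M$ is a subgroup of $U_K$ it remains only to show $R_{K,M}$ is closed under negation, i.e. contains additive inverses. Given $x \in R_{K,M}$ and $\e \in M$, note $\e^{-1} \in M$; applying the defining condition to $\e^{-1}$ yields $\delta \in M$ with $\delta - 1 \equiv (\e^{-1}-1)x \pmod{(\e^{-1}-1)^2}$. Since $\e^{-1} - 1 = -\e^{-1}(\e - 1)$ and $\e^{-1}$ is a unit, the ideals $(\e^{-1}-1)$ and $(\e-1)$ coincide, and $\e^{-1}-1 \equiv -(\e-1) \pmod{(\e-1)^2}$ because $\e^{-1} - 1 = -(\e-1) + (\e-1)(\e^{-1}-1)(-1)\cdot(\e^{-1})^{-1}\cdot\ldots$ — more cleanly, $\e^{-1} - 1 + (\e - 1) = (\e-1)(\e^{-1}-1)\cdot(-\e^{-1})$ lies in $(\e-1)^2$. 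Hence $\delta - 1 \equiv -(\e-1)x \equiv (\e-1)(-x) \pmod{(\e-1)^2}$, so $-x \in R_{K,M}$. Combined with part~(1) this makes $R_{K,M}$ a subring of $\oo_K$.

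For part~(3), suppose $M$ contains a unit $u$ of infinite order, and let $x \in R_{K,M}$. For every integer $n \ge 1$, applying the defining condition to $u^n \in M$ produces $\delta_n \in M \subset \oo_K^\times$ with $\delta_n - 1 \equiv (u^n - 1)x \pmod{(u^n-1)^2}$, so $u^n - 1$ divides $\delta_n - 1$ in $\oo_K$, and likewise $u^n - 1$ divides $\delta_n^{-1} - 1$ by the argument of part~(2). I expect the main obstacle to be the following step: one must deduce from these divisibilities, as $n$ ranges over all positive integers, that $x$ already lies in the smaller ring $\oo_{\Q(M)}$ (indeed it suffices to show $x \in \Q(M)$, since $x$ is an algebraic integer). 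The idea is that if $x \notin \Q(M)$, then $x$ has a nontrivial conjugate over $\Q(M)$; choosing a prime of $\oo_{\bar K}$ and comparing valuations of $\delta_n - 1$ against $(u^n-1)x$ should force a contradiction once $n$ is large enough that $u^n$ is sufficiently close to $1$ at that prime — one exploits that $u$ has infinite order so that some prime detects $u^n \to 1$ (or $u^n \to \infty$) along a suitable subsequence, while the conjugation argument shows $x$ and its conjugate cannot both satisfy the required congruence. Making this valuation-theoretic comparison precise, and in particular locating a prime where $u$ behaves nontrivially, is the delicate point; everything else is formal manipulation of congruences modulo $(\e-1)^2$.
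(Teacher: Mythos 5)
Parts (1) and (2) of your proposal are correct and essentially coincide with the paper's argument: the iteration $\e \to \delta \to \eta$ for products is exactly the paper's proof of multiplicative closure, and your closure under negation rests on the same identity $\e^{-1}-1 \equiv -(\e-1) \pmod{(\e-1)^2}$ that the paper uses (the paper packages it as ``$-1 \in R_{K,M}$'' and then invokes (1), but that is a cosmetic difference). One small algebra slip: your displayed identity $\e^{-1}-1+(\e-1)=(\e-1)(\e^{-1}-1)(-\e^{-1})$ is off by a unit factor --- the left side equals $\e^{-1}(\e-1)^2$, the right side $\e^{-2}(\e-1)^2$ --- but since only membership in $(\e-1)^2\oo_K$ is needed, the conclusion stands.

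Part (3), however, has a genuine gap, which you yourself flag. The step you are missing is not a delicate valuation comparison at a well-chosen prime; it is a global and quite soft observation. Let $y$ be a conjugate of $x$ over $\Q(M)$. Since $\e^n$ and the witness $\delta$ both lie in $M$, hence in $\oo_{\Q(M)}$, applying a $\Q(M)$-embedding to the relation $\delta-1=(\e^n-1)x+(\e^n-1)^2w$ shows that $y$ satisfies the \emph{same} congruence with the \emph{same} $\delta$ and $\e^n$; subtracting and cancelling one factor of $\e^n-1\neq 0$ gives $x-y\in(\e^n-1)\oo_{K'}$ for every $n\ge 1$ (in a suitable finite extension $K'$). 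The key lemma is then that $\bigcap_{n\ge 1}(\e^n-1)\oo_{K'}=0$: for \emph{every} ideal $\a$ of $\oo_{\Q(\e)}$ the image of $\e$ in $(\oo_{\Q(\e)}/\a)^\times$ has finite order, so $(\e^n-1)\subset\a$ for some $n$, and the intersection lands in $\bigcap_\a \a\oo_{K'}=0$. Hence $x=y$ for every conjugate, i.e.\ $x\in\Q(M)$, and being an algebraic integer, $x\in\oo_{\Q(M)}$. So the infinite order of $\e$ is used to guarantee that $\e^n-1$ is nonzero yet divisible by arbitrarily deep ideals as $n$ varies --- not to locate a single prime where $u$ ``behaves nontrivially,'' which is the point at which your sketch stalls.
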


\begin{proof}
That $R_{K, M}$ is closed under addition follows directly from Lemma \ref{sumform}.

Suppose $x, y \in R_{K,M}$ and $\e \in M$.
Since $x \in R_{K, M}$, there is a $\delta \in M$ such that 
$$
\delta - 1 \equiv (\e-1)x \pmod{(\e-1)^2}.
$$
Since $y \in R_{K, M}$, there is a $\delta' \in M$ such that
$$
\delta' - 1 \equiv (\delta-1)y \equiv ((\e-1)x)y \pmod{(\e-1)^2}.
$$
It follows that $xy \in R_{K, M}$, which completes the proof of (1).

Now suppose $M$ is a subgroup of $U_K$.  
That $R_{K,M}$ is a ring will follow from (1) once we show that $-1 \in R_{K,M}$.  
Suppose $\e \in M \subset U_{K}$.  Since $M$ is a group, $\e^{-1} \in M$, and 
$$
\e^{-1}-1 = -(\e-1) + \e^{-1}(\e-1)^2 \equiv -(\e-1) \pmod{(\e-1)^2}.
$$
It follows that $-1 \in R_{K, M}$, which completes the proof of (2).

Finally, suppose that $M$ contains a unit $\e$ of infinite order.  If $\a$ is a nonzero ideal 
of $\oo_{\Q(\e)}$, then the reduction of $\e$ has finite order in $(\oo_{\Q(\e)}/\a)^\times$,
i.e., $\e^n-1 \in \a$ for some $n \ge 1$.  Therefore
\begin{equation}
\label{Y}
\bigcap_{n \ge 1} (\e^n-1)\oo_{K} \subset 
   \bigcap_{\text{nonzero ideals $\a$ of $\oo_{\Q(\e)}$}} \a\oo_{K} = 0.
\end{equation}

Suppose $x \in R_{K, M}$, and $n \in \Z_{>0}$.  Since $\e^n \in M$, 
there is a $\delta \in M$ such that
$$
\delta-1 \equiv x(\e^n-1) \pmod{(\e^n-1)^2}.
$$ 
If $y$ is any conjugate of $x$ over $\Q(M)$, then the same congruence holds with $x$
replaced by $y$, and since $\e^n \ne 1$ we get
$$
x-y \in (\e^n-1)\oo_K.
$$
Thus $x-y \in \cap_{n>0} (\e^n-1)\oo_K$, 
which is zero by \eqref{Y}.  Hence $y = x$, and it follows that $x \in \oo_{\Q(M)}$.
\end{proof}

\begin{remark}
Suppose $M$ is a subgroup of $U_K$.
A priori, the ring $R_{K, M}$ is not necessarily the full ring of 
integers  ${\tilde{R}_{K, M}}$ in its field of fractions.  
But ${\tilde{R}_{K, M}}$ admits an existential definition in terms of $R_{K, M}$:
\end{remark}

\begin{proposition}
If $M$ is a subgroup of $U_K$, then
$${\tilde{R}_{K, M}}=\{x \in \oo_{K} \,|\, \exists y, z \in R_{K, M}, y \ne 0, xy=z\}.$$
\end{proposition}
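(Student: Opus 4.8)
The plan is to unwind both sides until they are visibly the same set. Write $R = R_{K,M}$ and let $F$ be its field of fractions; since $R \subseteq \oo_K \subseteq K$, this $F$ is a subfield of $K$, and by definition ${\tilde{R}_{K,M}}$ is the ring of integers $\oo_F$ of $F$. Let $T$ denote the set on the right-hand side of the claimed identity. The one fact I will invoke, twice, is the standard characterization that for any algebraic extension $L/\Q$ the ring $\oo_L$ is precisely the set of algebraic integers lying in $L$. Applying this to $L = F$ and to $L = K$, and using $F \subseteq K$, gives $\oo_F = \oo_K \cap F$. So it suffices to prove $T = \oo_K \cap F$.

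For $T \subseteq \oo_K \cap F$: an element $x \in T$ is of the form $x = z/y$ with $y, z \in R$ and $y \ne 0$, so $x \in F$; and $x \in \oo_K$ by the very definition of $T$. For the reverse inclusion $\oo_K \cap F \subseteq T$: given $x \in \oo_K \cap F$, membership in $F = \mathrm{Frac}(R)$ lets us write $x = z/y$ with $y, z \in R$, $y \ne 0$, hence $xy = z$ with $y,z \in R$ and $y \ne 0$; combined with $x \in \oo_K$ this is exactly the defining condition of $T$. Together with $\oo_F = \oo_K \cap F$ this yields $T = \oo_F = {\tilde{R}_{K,M}}$. As a consequence, the remark preceding the proposition is immediate: if $\varphi$ is any (existential) definition of $R_{K,M}$, then ``$\exists y \, \exists z\,\bigl(y \ne 0 \;\wedge\; xy = z \;\wedge\; \varphi(y) \;\wedge\; \varphi(z)\bigr)$'' is an (existential) definition of ${\tilde{R}_{K,M}}$.

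The only point that needs a moment's care — and the only plausible obstacle — is the identity $F = \{\,z/y : y,z \in R,\ y \ne 0\,\}$ used in the second inclusion. This is the usual description of a fraction field and is entirely routine as soon as $R$ is a ring, which by Proposition \ref{prop:isaring}(2) is the case whenever $M$ is a subgroup of $U_K$ — the situation relevant to our applications. When $M$ is merely a monoid, $R_{K,M}$ need only be a semiring; then ``field of fractions'' should be read as the subfield of $K$ generated by $R_{K,M}$, and one must check separately that each of its elements is a ratio of two elements of $R_{K,M}$. Beyond this bookkeeping there is nothing deep here: the content of the proposition is just the observation that $\oo_K \cap \mathrm{Frac}(R_{K,M})$ is an integrally closed domain with fraction field $\mathrm{Frac}(R_{K,M})$, hence is its full ring of integers.
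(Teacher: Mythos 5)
Your proof is correct, and it takes a genuinely more direct route than the paper's. You identify both sides with $\oo_K \cap F$, where $F = \mathrm{Frac}(R_{K,M})$, using only the elementary description of the fraction field of a domain as the set of ratios $z/y$ of its elements. The paper instead proves the nontrivial inclusion $\tilde{R}_{K,M} \subseteq \tilde{R}'_{K,M}$ by working number field by number field: for each number field $k$ inside $F$ it picks $\alpha \in R_{K,M}$ with $\Q(\alpha)=k$, clears denominators to get $m\beta \in \Z[\alpha] \subseteq R_{K,M}$ for any $\beta \in \oo_k$, and exhibits the explicit witnesses $y=m$, $z=m\beta$; it then concludes by observing that a set containing $\oo_k$ for every such $k$ is all of $\tilde{R}_{K,M}$. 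Your argument is shorter and avoids the paper's unjustified (though repairable) claim that every number field in $F$ is generated over $\Q$ by a single element of $R_{K,M}$; the paper's argument buys explicit, integer-valued denominators and makes the ``integrally closed in its fraction field'' mechanism visible, which is the conceptual point of the surrounding remark. You are also right to flag that both arguments tacitly use that $R_{K,M}$ is a ring (the paper needs $\Z[\alpha]\subseteq R_{K,M}$, you need $\mathrm{Frac}(R_{K,M})=\{z/y\}$), which Proposition \ref{prop:isaring}(2) guarantees in the cases the paper actually uses, namely $M$ a subgroup of $U_K$.
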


\begin{proof}  
Note that the set $\{y \in \oo_K : y \ne 0\}$ has an existential definition by a result of 
Denef (\cite[Lemma 9]{Den3}; see also \cite[Lemma 4.6]{MRSh22}).

Let $R$ denote the right hand side of 
the equation in the statement of the proposition.  
Then $R \subset {\tilde R}_{K, M}$.

For every number field $F$ contained in the field of fractions of $R_{K, M}$, 
there is an element $\alpha \in R_{K, M}$ such that $\Q(\alpha)=F$. For every 
$\beta \in \oo_{F}$ there exists $m \in \Z$ such that 
$m\beta \in \Z[\alpha] \subset R_{K, M}$, and taking $x = \beta$, $y=m$, and $z = m\beta$ 
in the definition of $R$ we see that $\beta \in R$. Thus $R$ contains 
every algebraic integer of $F$. Since this is true for every number field $F$ in 
the field of fractions of  ${\tilde{R}_{K, M}}$,  it follows that 
$R$ is integrally closed in its field of fractions, i.e., 
$R={\tilde{R}_{K, M}}$.  
\end{proof} 

\begin{corollary} 
If $M$ is a subgroup of $U_K$, and $M$ admits a first-order definition in $\oo_{K}$, 
then the rings $R_{K, M}$ and ${\tilde{R}_{K, M}}$ admit first-order definitions 
in $\oo_{K}$ as well.
\end{corollary}

\begin{lemma}
\label{norm}
Let $L/K$ be a finite extension.  
Suppose $\e \in U_{K}$, $x \in \oo_{K}$, and $\delta \in U_{L}$ satisfy 
\[
(\e-1)x \equiv \delta-1 \pmod {(\e-1)^2\oo_{L}}.
\]
  Then 
 \[
 (\e-1)[L:K]x \equiv {N_{L/K}(\delta)-1} \pmod{(\e-1)^2\oo_{K}}.
 \]
 \end{lemma}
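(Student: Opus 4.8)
The plan is to work in the ring $\oo_L$ modulo the ideal $(\e-1)^2\oo_L$ and track how the conjugates of $\delta$ behave, then descend to $\oo_K$ by taking the product over the embeddings. First I would fix an algebraic closure and let $\sigma_1,\dots,\sigma_d$ (with $d = [L:K]$) be the distinct $K$-embeddings of $L$; since $\e \in U_K$, each $\sigma_j$ fixes $\e$, so applying $\sigma_j$ to the hypothesis $(\e-1)x \equiv \delta - 1 \pmod{(\e-1)^2}$ gives $(\e-1)x \equiv \sigma_j(\delta) - 1 \pmod{(\e-1)^2\oo_{L'}}$ for a suitable finite extension $L'$ containing all the conjugates. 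In particular, for every $j$ we have $\sigma_j(\delta) - 1 \equiv (\e-1)x \pmod{(\e-1)^2}$, i.e. exactly the situation of Lemma \ref{sumform} with $\delta_j := \sigma_j(\delta)$ and $x_j := x$ for each $j$.

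Next I would apply Lemma \ref{sumform} to these $d$ elements: it yields
\[
\prod_{j=1}^d \sigma_j(\delta) - 1 \equiv (\e-1)\sum_{j=1}^d x = (\e-1)d\,x = (\e-1)[L:K]x \pmod{(\e-1)^2}.
\]
Now $\prod_j \sigma_j(\delta) = N_{L/K}(\delta)$, so this reads $N_{L/K}(\delta) - 1 \equiv (\e-1)[L:K]x \pmod{(\e-1)^2\oo_{L'}}$. Since $N_{L/K}(\delta) \in \oo_K$ and $x, \e \in \oo_K$, both sides of this congruence lie in $\oo_K$; the congruence a priori holds in the larger ring $\oo_{L'}$, so the last step is to check that it descends, i.e. that $(\e-1)^2\oo_{L'} \cap \oo_K = (\e-1)^2\oo_K$. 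This is the only genuinely non-formal point.

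The main obstacle is therefore this descent of the congruence. It is not automatic in general that $\a\oo_{L'} \cap \oo_K = \a$ for an ideal $\a$ of $\oo_K$, but here $\a = (\e-1)^2\oo_K$ is principal, generated by an element of $\oo_K$; and for a principal ideal $(\alpha)$ with $\alpha \in \oo_K$ one does have $\alpha\oo_{L'} \cap \oo_K = \alpha\oo_K$, since if $\alpha\beta \in \oo_K$ with $\beta \in \oo_{L'}$ then $\beta = (\alpha\beta)/\alpha \in K$ and $\beta$ is an algebraic integer, hence $\beta \in \oo_K$. Applying this with $\alpha = (\e-1)^2$ completes the argument. (Alternatively, one can avoid $L'$ altogether by ordering the conjugates so that $\delta \in L$ and noting $N_{L/K}(\delta) - (\e-1)[L:K]x - 1 \in L \cap (\e-1)^2\oo_{L'}$, then applying the same principal-ideal observation inside $L$.)
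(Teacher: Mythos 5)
Your proof is correct and follows essentially the same route as the paper's: apply each $K$-embedding to the hypothesis, invoke Lemma \ref{sumform} with $\delta_j = \sigma_j(\delta)$ and $x_j = x$, and identify the product with $N_{L/K}(\delta)$. The only difference is that you make explicit the descent step (that a congruence between elements of $\oo_K$ modulo the principal ideal $(\e-1)^2\oo_{L'}$ descends to $\oo_K$), which the paper leaves implicit in the phrase ``both sides of the congruence lie in $K$''; your justification of it is correct.
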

 
 \begin{proof} 
 Let $\Sigma$ be the set of embeddings of $L/K$ into the Galois closure $\tilde{L}$ of $L/K$. 
 Since $x$ and $\e$ lie in $K$, for every $\sigma \in \Sigma$ we have that
 \[
(\e-1)x \equiv \sigma(\delta)-1 \pmod{(\e-1)^2\oo_{\tilde{L}}}.
\]
We have $\prod_{\sigma\in\Sigma} \sigma(\delta) = N_{L/K}(\delta)$ and
$|\Sigma| = [L:K]$.  Therefore Lemma \ref{sumform} applied with $n = [L:K]$, the given $\e$, 
all $x_i$ equal to the given $x$, and the $\delta_i$ equal to  
$\sigma(\delta)$ for $\sigma \in \Sigma$ shows that 
$$
{N_{L/K}(\delta)-1} \equiv (\e-1)[L:K]x  \pmod{(\e-1)^2\oo_{\tilde{L}}}.
$$
Both sides of this congruence belong to $\oo_K$, so the congruence holds
 modulo $(\e-1)^2\oo_K$.
\end{proof}

Recall that we denote $R_{K,U_K}$ simply by $R_K$.

\begin{lemma}
\label{le:smaller}
Let $W$ be a subgroup of $U_K$, let $n \in \Z_{>0}$ and let $\a$ be an ideal of $\oo_K$.  Let
$$
M := \{u^n : u \in W, u \equiv 1 \pmod{\a}\}.
$$
Then $nR_{K,W} \subset R_{K,M}$.  
\end{lemma}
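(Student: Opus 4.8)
The plan is to verify directly that every element of $nR_K$ meets the defining condition for membership in $R_{K,M}$. So fix $x \in R_K$ and an arbitrary $\e' \in M$; by definition of $M$ we may write $\e' = u^n$ with $u \in U_K$ and $u \equiv 1 \pmod{\a}$. We must produce some $\delta' \in M$ with $\delta' - 1 \equiv (\e'-1)(nx) \pmod{(\e'-1)^2}$.

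The key move is to apply the hypothesis $x \in R_K$ not to the unit $u$ but to the unit $\e' = u^n \in U_K$: this yields some $\delta \in U_K$ with $\delta - 1 \equiv (u^n-1)x \pmod{(u^n-1)^2}$, say $\delta - 1 = (u^n-1)x + (u^n-1)^2w$ with $w \in \oo_K$. Since $u \equiv 1 \pmod{\a}$ we have $u^n - 1 \in \a$, so both terms on the right-hand side lie in $\a$; hence $\delta \equiv 1 \pmod{\a}$ automatically, and therefore $\delta^n \in M$.

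It then remains only to pass from the congruence for $\delta$ to one for $\delta^n$, and this is exactly Lemma \ref{sumform} applied with $\e = u^n$ and with $x_1 = \cdots = x_n = x$, $\delta_1 = \cdots = \delta_n = \delta$: it gives $\delta^n - 1 \equiv (u^n-1)(nx) \pmod{(u^n-1)^2}$, that is, $\delta^n - 1 \equiv (\e'-1)(nx) \pmod{(\e'-1)^2}$. So $\delta' := \delta^n$ is the required witness, and since $\e'$ was arbitrary this shows $nx \in R_{K,M}$. The argument is short and I do not anticipate a real obstacle; the only point worth spotting is that one should feed the $n$th power $u^n$ (rather than $u$) into the definition of $R_K$, after which the membership $\delta^n \in M$ comes for free because the mod-$\a$ condition on $\delta$ is forced by the congruence, and the $n$th power is absorbed by Lemma \ref{sumform}.
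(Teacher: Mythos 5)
Your proof is correct and is essentially identical to the paper's: both apply the hypothesis $x \in R_K$ to the unit $\e = u^n \in M \subset U_K$ to obtain $\delta$, observe that the congruence forces $\delta \equiv 1 \pmod{\a}$ (since $\e - 1 \in \a$), and then invoke Lemma \ref{sumform} with $x_i := x$, $\delta_i := \delta$ to conclude $\delta^n - 1 \equiv (\e-1)nx \pmod{(\e-1)^2}$ with $\delta^n \in M$.
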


\begin{proof}
Let $x \in R_{K,W}$.  We will show $nx \in R_{K,M}$.  Let $\e \in W$.  
Since $x \in R_{K,W}$ there exists $\delta \in W$ such that 
\[
(\e-1)x \equiv \delta-1 \pmod{(\e-1)^2}
\]
 and it follows that $\delta \equiv 1 \pmod{\a}$ and $\delta^n \in M$.  
 Therefore by Lemma \ref{sumform} 
 applied with this $n$ and $\e$, all $x_i := x$, and all $\delta_i := \delta$,
 \[
 (\e-1)nx \equiv \delta^n-1 \pmod{(\e-1)^2}.
 \]
Thus $nx \in R_{K,M}$.
\end{proof}

\begin{definition}
We call a field $K$ of algebraic numbers a {\em CM-field} if $K$ is a 
totally imaginary quadratic extension of a totally real field.  We 
include the case $[K:\Q] = \infty$.
\end{definition}

\begin{proposition}
\label{cor:real1}
Let $K$ be a CM-field that is not an imaginary quadratic field, 
and let $F$ be the maximal real subfield of $K$.  
Then $R_{K} \subset \oo_F$.
\end{proposition}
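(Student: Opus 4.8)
The plan is to exploit the Galois-theoretic structure of a CM-field. Let $c$ denote complex conjugation, i.e., the unique nontrivial automorphism of $K/F$, which is central in $\Gal{\tilde K}{\Q}$ for the Galois closure $\tilde K$. The key observation is that by part (3) of Proposition \ref{prop:isaring}, if $K$ contains a unit of infinite order then $R_K \subset \oo_{\Q(U_K)}$; and for a CM-field that is not imaginary quadratic, Dirichlet's unit theorem gives that $U_K$ and $U_F$ have the same rank, so $U_F$ is a finite-index subgroup of $U_K$ and in particular $\Q(U_K)$ contains a real field of positive unit rank. I would first reduce to showing that every $x \in R_K$ is fixed by $c$. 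Since $x \in \oo_K$ and $K^c = F$, this gives $x \in \oo_F$ as desired.

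**Applying Lemma \ref{norm} with $L = K$, $K$ replaced by $F$.** Here is the main idea for showing $c(x) = x$. Take any $x \in R_K$ and any unit $\e \in U_F$ of infinite order (such exists since $F$ has positive unit rank). Since $\e \in U_F \subset U_K$, by definition of $R_K = R_{K,U_K}$ there is $\delta \in U_K$ with $(\e-1)x \equiv \delta - 1 \pmod{(\e-1)^2\oo_K}$. Now apply Lemma \ref{norm} to the extension $K/F$: since $\e \in U_F$, $x \in \oo_K$, wait — Lemma \ref{norm} requires $x \in \oo_K$ where $K$ is the \emph{bottom} field. So I restate: applying Lemma \ref{norm} with the extension "$L/K$" taken to be $K/F$, with $\e \in U_F$ and $\delta \in U_K$, I need the congruence input to have $x \in \oo_F$, which is not yet known.

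**The correct route: compare $x$ with its conjugate directly.** Instead I will mimic the proof of Proposition \ref{prop:isaring}(3). Fix a real unit $\e \in U_F$ of infinite order. For each $n > 0$, $\e^n \in U_F \subset U_K$, so there is $\delta_n \in U_K$ with $(\e^n - 1)x \equiv \delta_n - 1 \pmod{(\e^n-1)^2\oo_K}$. Apply $c$ to this congruence: since $c(\e^n) = \e^n$ (as $\e \in F$), we get $(\e^n-1)c(x) \equiv c(\delta_n) - 1 \pmod{(\e^n-1)^2\oo_K}$. Subtracting, $(\e^n - 1)(x - c(x)) \equiv (\delta_n - c(\delta_n)) \pmod{(\e^n-1)^2\oo_K}$; but also $\delta_n - 1 \equiv 0$ and $c(\delta_n) - 1 \equiv 0 \pmod{(\e^n-1)\oo_K}$, hence $\delta_n - c(\delta_n) \equiv 0 \pmod{(\e^n-1)\oo_K}$, and dividing by $\e^n - 1$ (a nonzerodivisor) shows $x - c(x) \in (\e^n-1)\oo_K$ for every $n$. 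Since $\e$ has infinite order, the argument of equation \eqref{Y} — the reduction of $\e$ in any finite quotient $(\oo_{\Q(\e)}/\a)^\times$ has finite order — gives $\bigcap_{n\ge1}(\e^n-1)\oo_K = 0$, so $x = c(x)$, i.e.\ $x \in \oo_F$.

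**Main obstacle.** The only delicate point is confirming that a CM-field $K$ that is not imaginary quadratic actually contains a unit of infinite order lying in its maximal real subfield $F$ — equivalently, that $F \neq \Q$ or that $F$ itself has infinitely many units. Since $K$ is a totally imaginary quadratic extension of the totally real field $F$, if $[K:\Q] > 2$ then $[F:\Q] \geq 2$, and a totally real field of degree $\geq 2$ has unit rank $\geq 1$ by Dirichlet's unit theorem (finite case) or simply contains such a subfield (infinite case). This handles all cases, since "$K$ is CM but not imaginary quadratic" forces $[F:\Q] \geq 2$. I expect this to be straightforward; the congruence manipulation above is the substance, and it is essentially a re-run of the proof of Proposition \ref{prop:isaring}(3) with the extra input that $c$ fixes the chosen unit.
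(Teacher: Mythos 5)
There is a genuine gap at the step ``dividing by $\e^n-1$ shows $x - c(x) \in (\e^n-1)\oo_K$.'' After applying $c$ and subtracting you have
$$
(\e^n-1)\bigl(x - c(x)\bigr) \equiv \delta_n - c(\delta_n) \pmod{(\e^n-1)^2\oo_K},
$$
and you correctly observe $\delta_n - c(\delta_n) \in (\e^n-1)\oo_K$, say $\delta_n - c(\delta_n) = (\e^n-1)y_n$. But dividing through by $\e^n-1$ only yields $x - c(x) \equiv y_n \pmod{(\e^n-1)\oo_K}$; it does \emph{not} give $x - c(x) \in (\e^n-1)\oo_K$ unless $y_n \in (\e^n-1)\oo_K$, i.e., unless $\delta_n - c(\delta_n) \in (\e^n-1)^2\oo_K$, which you have no way to guarantee. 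The argument you are mimicking — Proposition \ref{prop:isaring}(3) — works because there both $\e$ and $\delta$ lie in $\Q(M)$, so $\delta$ is \emph{fixed} by the conjugation and the term $\delta - \sigma(\delta)$ is literally zero. In your setting $\delta_n$ is an arbitrary element of $U_K$ and is not fixed by $c$, so that cancellation is unavailable. (One could observe that $\delta_n/c(\delta_n)$ is a root of unity $\zeta_n$, reducing the error term to $\zeta_n-1$ modulo $(\e^n-1)^2$; but controlling $\zeta_n$ uniformly, especially over CM-fields of infinite degree where $\mu_K$ may be infinite, requires further work and is not addressed.)

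The missing idea is precisely the one the paper uses: first shrink the unit group to a subgroup $M$ that is forced to lie in $U_F$, so that the witnessing $\delta$ is automatically $c$-invariant. Concretely, the paper takes $M := \{u^2 : u \in U_K,\ u \equiv 1 \pmod{d\oo_K}\}$ for a fixed $d>1$, shows via the root-of-unity argument on $u/\bar u$ (with the congruence $u \equiv 1 \pmod d$ killing the possible nontrivial root of unity) that $M \subset U_F$, uses Lemma \ref{le:smaller} to get $2R_K \subset R_{K,M}$, and then applies Proposition \ref{prop:isaring}(3) directly with this $M$, landing in $\oo_{\Q(M)} \subset \oo_F$. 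Your proposal omits the reduction from $R_K$ to $R_{K,M}$ and so never controls $\delta_n$; this is where it breaks.
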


\begin{proof}
Fix $d \in \Z_{>2}$ and let $M := \{u^2 : u \in U_K, u \equiv 1 \pmod{d\oo_K}\}$.  
We first claim that $M \subset U_{F}$.  Let $\alpha \mapsto \bar\alpha$ 
denote complex conjugation on $K$.
If $\e \in M$, we can write $\e = u^2$ with $u \in U_K$, 
$u \equiv 1 \pmod{d}$. Thus 
$$
\e = u^2 = (u\bar{u})(u/\bar{u}).
$$
Since $u/\bar{u} \in U_K$ and all of its archimedean absolute values are $1$, 
$u/\bar{u}$ is a root of unity.  
Since $u \equiv 1 \pmod{d}$ we must have $u/\bar{u} = 1$. 
Thus $\e = u\bar{u} \in U_F$.

Since $K$ is not an imaginary quadratic field, $M$ contains units of infinite order. 
Thus, by Lemma \ref{le:smaller} for the first containment and 
Proposition \ref{prop:isaring}(3) for the second, we have
$$
2 R_{K} \subset R_{K,M} \subset \oo_{\Q(M)} \subset \oo_F.
$$
It follows that $R_K \subset \oo_F$.
\end{proof}

\begin{remark}
We can apply Proposition \ref{cor:real1} to $\Q^{\ab},$ the maximal abelian 
extension of $\Q$, to conclude that there is a totally real ring 
(namely, $R_{\Q^\ab}$) that is first-order definable over $\oo_{\Q^{\ab}}$.  
Unfortunately, at the moment we cannot identify the ring $R_{\Q^\ab}$.
\end{remark}

\begin{remark}
Let $\Q{^{\text{ab},+}}$ be the the maximal totally real subfield of $\Q^{\ab}$. 
It is easy to see that the proof of J. Robinson from \cite{Rob3} showing undecidability of the ring of integers of the field of totally real numbers applies to the ring of integers of  $\Q{^{\text{ab},+}}$. So, the first-order theory of $\oo_{\Q{^{\text{ab},+}}}$ is 
undecidable.  Moreover, she  conjectured that the ring of integers of any 
totally real field is undecidable.  J. Koenigsmann \cite{Ko14} conjectured that 
$\oo_{\Q{^{\text{ab},+}}}$ has a first-order definition over $\oo_{\Q^{\text{ab}}}$ 
implying  via the above mentioned result  that $\oo_{\Q^{\ab}}$ also 
has an undecidable first-order theory. As a corollary of Proposition \ref{cor:real1} 
we can add the following conditions that would 
imply undecidability of the first-order theory of $\oo_{\Q^{\ab}}$.
\end{remark}

\begin{corollary}
\label{c1}
The ring $\oo_{\Q^{\text{ab}}}$ is first-order undecidable if at least one of the following
conditions is satisfied:
\begin{enumerate}
\item the ring $R_{\Q^{\text{ab}}}$ is undecidable, or
\item 
the ring of integers of every real abelian extension of $\Q$ is undecidable, or
\item 
there exists a non-big subfield of $\Q^{\ab}$ whose ring of integers is 
first-order definable over $\oo_{\Q{^{\ab}}}$.
\end{enumerate}
\end{corollary}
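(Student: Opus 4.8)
The plan is to treat the three conditions in turn, in each case producing a first-order definition of $\Z$ over $\oo_{\Q^{\ab}}$ and invoking the general principle, recalled in the introduction, that if $\Z$ is first-order definable over a ring then that ring has undecidable first-order theory. Throughout I would exploit the fact, established in Proposition \ref{cor:real1} and the remark following it, that $R_{\Q^{\ab}}$ is a subring of $\oo_{\Q^{\ab,+}}$ consisting of totally real integers, and that $R_{\Q^{\ab}}$ is itself first-order definable over $\oo_{\Q^{\ab}}$ (since $U_{\Q^{\ab}}$ is; this is the Corollary immediately after Proposition \ref{prop:isaring}, together with the explicit shape of the definition of $R_{K,M}$).

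For condition (2): if $R_{\Q^{\ab}}$ is undecidable, then since $R_{\Q^{\ab}}$ has a first-order definition over $\oo_{\Q^{\ab}}$, the first-order theory of $\oo_{\Q^{\ab}}$ is undecidable by the standard relativization argument --- a decision procedure for $\oo_{\Q^{\ab}}$ would, combined with the defining formula, decide $R_{\Q^{\ab}}$. For condition (1): I would show $R_{\Q^{\ab}}$ is a ring of the type covered by the hypothesis. By Proposition \ref{cor:real1}, $\tilde R_{\Q^{\ab}}$ is the ring of integers of its fraction field $F'$, which is a subfield of $\Q^{\ab,+}$, hence a (possibly infinite) real abelian extension of $\Q$; by hypothesis (1) its ring of integers is undecidable, and as in the previous case this undecidability transfers to $\oo_{\Q^{\ab}}$ through the definition of $\tilde R_{\Q^{\ab}}$ given in the Proposition just before this corollary. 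One subtlety to address here is whether $F'$ is actually an infinite extension or could be a number field --- but a number field ring of integers is undecidable by J. Robinson, so hypothesis (1) (or rather its content) still applies; I would phrase condition (1) to cover all real abelian extensions including number fields, as is done. For condition (3): if $L\subset\Q^{\ab}$ is non-big with $\oo_L$ first-order definable over $\oo_{\Q^{\ab,+}}$, then first note $\oo_{\Q^{\ab,+}}$ is (as just discussed) of the form $\tilde R_{\Q^{\ab}}$ up to the fraction-field issue --- more carefully, I would want $\oo_{\Q^{\ab,+}}$ itself, not merely $\tilde R_{\Q^{\ab}}$, to be definable over $\oo_{\Q^{\ab}}$; this holds because $\Q^{\ab}$ is a non-big Galois extension (its degree over $\Q$ omits, say, a prime not dividing any $[\Q(\zeta_n):\Q]=\phi(n)$ in the relevant sense --- actually $\Q^{\ab}$ \emph{is} big, so this needs care), so I would instead route through Proposition \ref{cor:real1} directly: $R_{\Q^{\ab}}\subseteq\oo_{\Q^{\ab,+}}$ and $\tilde R_{\Q^{\ab}}=\oo_{F'}$, and use that $\oo_L$ definable over $\oo_{\Q^{\ab,+}}$ lets us define $\oo_L\cap\oo_{F'}$. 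Then $L$ being non-big, Theorem \ref{thm:main} gives a first-order definition of $\Z$ over $\oo_L$, hence over $\oo_{\Q^{\ab}}$.

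The main obstacle, and the point demanding the most care, is condition (3): $\Q^{\ab}$ is a big field, so one cannot directly define $\oo_{\Q^{\ab,+}}$ inside $\oo_{\Q^{\ab}}$ by the $p$-bounded machinery used in the Corollary to Theorem \ref{thm:main}. The chain of definability must instead be built entirely from Proposition \ref{cor:real1}: one has a concretely definable real subring $R_{\Q^{\ab}}$ (or $\tilde R_{\Q^{\ab}}=\oo_{F'}$) of $\oo_{\Q^{\ab}}$ whose fraction field $F'$ is totally real abelian, and the hypothesis of (3) must be understood as definability over $\oo_{F'}$ --- or one must first check $\oo_{\Q^{\ab,+}}$ is itself obtainable, perhaps by taking integral closure, which is an existential operation by the Proposition preceding this corollary. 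Once the definability of $\oo_L$ over $\oo_{\Q^{\ab}}$ is secured, the application of Theorem \ref{thm:main} to the non-big field $L$ and the relativization of undecidability are routine. I would therefore structure the write-up so that the hard definability bookkeeping for (3) is isolated in one paragraph, with (1) and (2) dispatched quickly first.
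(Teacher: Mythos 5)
Your treatment of conditions (1) and (2) is correct and is essentially the argument the corollary is pointing at (the paper itself records no explicit proof): $R_{\Q^{\ab}}$ is first-order definable over $\oo_{\Q^{\ab}}$ by its very definition (units being existentially definable), $\tilde{R}_{\Q^{\ab}}$ is existentially definable from it, and by Proposition \ref{cor:real1} the latter is the full ring of integers of a totally real subfield $F'$ of $\Q^{\ab,+}$, i.e.\ of a real abelian extension of $\Q$; relativizing quantifiers to these definable subrings transfers undecidability up to $\oo_{\Q^{\ab}}$.

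Condition (3) is where your proof has a genuine gap, and you have not closed it. The hypothesis of (3) is a definability statement over $\oo_{\Q^{\ab,+}}$, whereas the conclusion concerns $\oo_{\Q^{\ab}}$, so some definable bridge between the two rings is indispensable. Your proposed bridge --- that ``$\oo_L$ definable over $\oo_{\Q^{\ab,+}}$ lets us define $\oo_L\cap\oo_{F'}$'' --- does not work: a formula defining $\oo_L$ in the structure $\oo_{\Q^{\ab,+}}$ says nothing once its quantifiers are reinterpreted over the proper subring $\oo_{F'}$, so it yields no definition of $\oo_L\cap\oo_{F'}$ over $\oo_{F'}$, and hence nothing over $\oo_{\Q^{\ab}}$. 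Your fallback of ``taking integral closure'' of $R_{\Q^{\ab}}$ only produces $\oo_{F'}$ for the unidentified field $F'=\mathrm{Frac}(R_{\Q^{\ab}})$, not $\oo_{\Q^{\ab,+}}$; the paper explicitly says it cannot identify $R_{\Q^{\ab}}$, and a first-order definition of $\oo_{\Q^{\ab,+}}$ over $\oo_{\Q^{\ab}}$ is precisely Koenigsmann's open conjecture, not something you may assume. As written, your argument for (3) establishes only that the hypothesis implies undecidability of $\oo_{\Q^{\ab,+}}$ (already known from Robinson), not of $\oo_{\Q^{\ab}}$. To complete (3) you would need either to show $F'=\Q^{\ab,+}$, or to work with definability of $\oo_L$ over $\oo_{F'}$ (or over $\oo_{\Q^{\ab}}$ itself), after which Theorem \ref{thm:main} applied to the non-big field $L$ finishes the argument exactly as you indicate.
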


\section{Computing $R_K$ in special cases}

\begin{proposition}
Let $K$ be an imaginary quadratic field.  Then $R_K = \Z + 2 \oo_K$.
\end{proposition}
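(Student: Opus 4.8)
The plan is to prove the two inclusions separately, and the key structural observation at the outset is that $U_K$ is a group, so $R_K$ is a subring of $\oo_K$ by Proposition \ref{prop:isaring}(2); since taking $\delta=\e$ in the definition shows $1\in R_K$, we get $\Z\subseteq R_K$. Hence the inclusion $\Z+2\oo_K\subseteq R_K$ reduces to showing $2\oo_K\subseteq R_K$, and for the reverse inclusion it will suffice to cut $R_K$ down using a single test unit.

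For the inclusion $R_K\subseteq\Z+2\oo_K$, I would substitute only the test value $\e=-1$ into the defining condition. This produces $\delta\in U_K$ with $\delta-1\equiv-2x\pmod{4\oo_K}$; reducing modulo $2\oo_K$ forces $\delta\equiv1\pmod{2\oo_K}$, and the only roots of unity of an imaginary quadratic field congruent to $1$ modulo $2$ are $\pm1$ (a root of unity of odd order $>1$ injects into $(\oo_K/\gp)^\times$ for any prime $\gp\mid2$ and so cannot reduce to $1$, while in $\Q(i)$ one checks $\pm i\not\equiv1\pmod{2\oo_K}$ directly). If $\delta=1$ then $2x\in4\oo_K$, so $x\in2\oo_K$; if $\delta=-1$ then $2(x-1)\in4\oo_K$, so $x\in1+2\oo_K$. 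Either way $x\in\Z+2\oo_K$.

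For $2\oo_K\subseteq R_K$, I would fix $x\in\oo_K$ and, for each $\e\in U_K$, exhibit a $\delta\in U_K$ with $\delta-1\equiv2x(\e-1)\pmod{(\e-1)^2}$. When $\e=1$, or whenever $(\e-1)\mid2$ in $\oo_K$, one simply takes $\delta=1$, since then $(\e-1)^2\mid2x(\e-1)$. Because every root of unity $\e$ with $[\Q(\e):\Q]\le2$ lies in $\{\pm1,\pm i,\zeta_3^{\pm1},\zeta_6^{\pm1}\}$, and $(\e-1)\mid2$ holds for all of these except the primitive cube roots (for instance $\zeta_6-1=\zeta_3$ is a unit, whereas $\zeta_3-1$ has norm $3$), the only remaining possibility is that $\e$ is a primitive cube root of unity, which forces $K=\Q(\sqrt{-3})$. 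In that case $(\e-1)^2\oo_K=3\oo_K$; writing $\pi=\e-1$ one has $\e^k-1\equiv k\pi\pmod{\pi^2}$ for $k=0,1,2$ (because $\e\equiv1\pmod\pi$), so choosing $k\in\{0,1,2\}$ with $k\equiv2x\pmod\pi$, which is possible since $\oo_K/\pi\cong\F_3$, and setting $\delta=\e^k$ gives $\delta-1\equiv2x\pi\pmod{\pi^2}$, as required. This exhausts $U_K$ for every imaginary quadratic $K$, completing the proof.

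The only genuinely delicate point is the bookkeeping for the two fields $\Q(i)$ and $\Q(\sqrt{-3})$, whose unit groups are larger than $\{\pm1\}$; for every other imaginary quadratic $K$ one has $U_K=\{\pm1\}$ and both inclusions fall out of the $\e=-1$ computation alone. So the ``obstacle'' is really just checking that the extra units — in particular the primitive cube roots of unity in $\Q(\sqrt{-3})$, where $\e-1$ is a prime above $3$ rather than a divisor of $2$ — neither enlarge nor shrink $R_K$ beyond $\Z+2\oo_K$.
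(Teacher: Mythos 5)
Your proposal is correct in substance and takes a genuinely different route from the paper. The paper brute-forces the computation: it defines $S_{\e,\delta}=\{x : \delta-1\equiv(\e-1)x\pmod{(\e-1)^2}\}$, writes $R_K=\bigcap_\e\bigcup_\delta S_{\e,\delta}$, and tabulates $S_{\e,\delta}$ for every pair $(\e,\delta)\in U_K^2$, field by field. You instead leverage the subring structure (Proposition \ref{prop:isaring}(2)) to reduce $\Z+2\oo_K\subseteq R_K$ to the single inclusion $2\oo_K\subseteq R_K$, and obtain the reverse containment $R_K\subseteq\Z+2\oo_K$ from the single test unit $\e=-1$. This is cleaner and more conceptual: it isolates exactly which units ever impose a nontrivial constraint (only $\e=-1$ for the upper bound, only the primitive cube roots in $\Q(\sqrt{-3})$ for the lower bound, since $(\e-1)\mid 2$ disposes of everything else). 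The paper's tabulation is more mechanical but leaves less to check.

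One small gap to close: in the step ``the only roots of unity congruent to $1$ modulo $2$ are $\pm1$,'' your parenthetical covers roots of unity of odd order and $\pm i$, but not $\zeta_6^{\pm1}$ in $\Q(\sqrt{-3})$, which have order $6$ (even) and are not $\pm i$. You in fact already observe the needed fact in the next paragraph: $\zeta_6-1=\zeta_3$ is a unit, so $\zeta_6\not\equiv 1\pmod 2$ (and similarly for $\zeta_6^{-1}$); alternatively $\zeta_6=-\zeta_3^{-1}$, and $-1\equiv 1\pmod{2\oo_K}$, so $\zeta_6\equiv 1\pmod 2$ would force $\zeta_3^{-1}\equiv 1\pmod 2$, contradicting the odd-order case. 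Either remark closes the gap; as written, the parenthetical justification is incomplete even though the asserted fact is true.
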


\begin{proof}
For $\e,\delta\in U_K$ let 
$$
S_{\e,\delta} := \{x \in \oo_K : \delta - 1 \equiv (\e-1)x \pmod{(\e-1)^2}\}.
$$
Then by definition, 
\begin{equation}
\label{capcup}
R_K := \bigcap_{\e\in U_K} \bigl(\bigcup_{\delta\in U_K} S_{\e,\delta}\bigr).
\end{equation}
One checks easily that 
$S_{1,1} = \oo_K$, $S_{-1,1} = 2\oo_K$, and $S_{-1,-1} = 1 + 2\oo_K$.
When $U_K = \{-1,1\}$, the proposition now follows from \eqref{capcup}.

Now suppose $K = \Q(i)$. In this case 
\begin{gather*}
S_{i,1} = S_{i,-1} = (1+i)\oo_K, \quad S_{i,i} = S_{i,-i} = 1 + (1+i)\oo_K, \\
S_{-i,1} = S_{-i,-1} = (1+i)\oo_K, \quad S_{-i,i} = S_{-i,-i} = 1 + (1+i)\oo_K, \\
S_{-1,i} = S_{-1,-i} = \emptyset.
\end{gather*}
Again it follows from \eqref{capcup} that $R_{\Q(i)} = \Z + 2\Z[i]$.
The proof when $K = \Q(\sqrt{-3})$ is similar.
\end{proof}

The following theorem will be used to study the case of unit groups of rank 1.
Its proof can be found in \cite{EG}.

\begin{theorem}[Corollary 6.1.2 of \cite{EG}]
\label{le:finmany}
Suppose $K$ is a number field.  Then the equation $x_1+x_2+x_3=1$ has only finitely 
many solutions with all $x_i \in U_K - \{1\}$.
\end{theorem}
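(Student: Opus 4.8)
The plan is to recognize this as a special case of the finiteness theorem for unit equations, the point being that the hypothesis $x_i \neq 1$ automatically excludes every degenerate solution, so that one may appeal directly to the Subspace-Theorem-based result in its cleanest form.

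First I would recall that, by Dirichlet's unit theorem, $U_K = \oo_K^\times$ is a finitely generated abelian group. Given a solution $(x_1,x_2,x_3)$ with each $x_i \in U_K \setminus\{1\}$, I would rewrite the equation as
\[
x_1 + x_2 + x_3 + x_4 = 0, \qquad x_4 := -1,
\]
a vanishing linear combination, with all coefficients equal to $1$, of four elements of the finitely generated group $U_K$.

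The crucial (and entirely elementary) step is to check that \emph{no proper nonempty subsum of $\{x_1,x_2,x_3,x_4\}$ vanishes}. A singleton cannot vanish since units are nonzero. If $x_i + x_4 = 0$ then $x_i = 1$, excluded; and a subsum of the form $\{x_i,x_j,x_4\}$ has complementary singleton $\{x_k\}$, so it equals $-x_k \neq 0$. If $x_i + x_j = 0$ for two of the first three indices, then the complementary subsum $\{x_k,x_4\}$ also vanishes, forcing $x_k = 1$, excluded; and the triple $\{x_1,x_2,x_3\}$ sums to $1 \neq 0$. Hence every solution with $x_i \neq 1$ is \emph{non-degenerate}.

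Finally I would invoke the finiteness theorem for non-degenerate solutions of unit equations (van der Poorten--Schlickewei, Evertse; a consequence of Schmidt's Subspace Theorem): an equation $a_1 u_1 + \dots + a_n u_n = 0$ in characteristic zero, with the $u_i$ ranging over a fixed finitely generated multiplicative group and with no vanishing proper subsum, has only finitely many solutions up to a common scalar factor. Applying this with $n = 4$, all $a_i = 1$, the group $U_K$, and the scaling already pinned down by $x_4 = -1$, one obtains finitely many non-degenerate solutions, hence finitely many solutions of $x_1 + x_2 + x_3 = 1$ with $x_i \in U_K \setminus\{1\}$; this is precisely the content of Corollary 6.1.2 of \cite{EG}. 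The main obstacle is entirely the Subspace Theorem input underpinning that finiteness statement (already for three summands the classical Baker-type treatment of the two-term unit equation does not suffice); the reduction itself is just Dirichlet's theorem together with the subsum bookkeeping above, which is why the result is quoted rather than reproved here.
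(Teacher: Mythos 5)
The paper does not supply a proof of this statement; it simply cites Corollary~6.1.2 of Evertse--Gy\H{o}ry \cite{EG} and moves on, so there is no internal argument to compare yours against. Your blind derivation is nonetheless correct and is the standard way such a corollary is extracted from the general finiteness theorem for non-degenerate solutions: homogenize by setting $x_4 := -1$, note that $-1 \in U_K$ and that fixing $x_4 = -1$ selects a unique representative from each proportionality class, and then check that every degenerate solution has some $x_i = 1$. Your subsum bookkeeping is exhaustive and accurate: singletons are nonzero since units are nonzero; a pair $\{x_i, x_4\}$ vanishing forces $x_i = 1$; a pair $\{x_i, x_j\}$ with $i,j \le 3$ vanishing forces the complementary $x_k = 1$; and triples are minus singletons. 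You are also right to flag the Subspace Theorem as the real engine here, since for three or more summands the classical two-term machinery does not suffice. In short, the proposal is a correct proof sketch of the cited corollary, consistent with the reference the paper relies on.
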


\begin{lemma}
\label{enmo}
Suppose $K$ is a number field, $\e \in U_K$ is a unit of infinite order, 
and $a, b \in \Z$.  Let $g := \gcd(a,b)$.  
\begin{enumerate}
\item
If $b \mid a$ then $(\e^b-1) \mid (\e^a-1)$ and $(\e^a-1)/(\e^b-1) \equiv a/b \pmod{(\e^b-1)}$.
\item
The ideal of $\oo_K$ generated by $\e^a-1$ and $\e^b-1$ is $(\e^g-1)\oo_K$.
\item
If $(\e^b-1) \mid (\e^a-1)$ then $(\e^b-1)/(\e^g-1) \in U_K$.
\end{enumerate}
\end{lemma}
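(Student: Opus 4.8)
The plan is to prove each of the three parts of Lemma~\ref{enmo} using elementary properties of the elements $\e^n - 1$ in $\oo_K$, exploiting that $\e$ has infinite order so that $\e^n \ne 1$ for all $n \ne 0$ (hence $\e^n - 1 \ne 0$).

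For part (1), if $b \mid a$, write $a = bk$ and use the factorization
$\e^a - 1 = \e^{bk} - 1 = (\e^b - 1)(\e^{b(k-1)} + \e^{b(k-2)} + \cdots + \e^b + 1)$,
where the second factor lies in $\oo_K$. (One should be slightly careful if $a$ or $b$ is negative or zero; the cleanest route is to first reduce to the case $a, b > 0$, noting that $\e^{-n} - 1 = -\e^{-n}(\e^n - 1)$ and $\e^{-n}$ is a unit, so divisibility among the $\e^n - 1$ is unaffected by sign changes, and the case $b = 0$ is trivial since $\e^0 - 1 = 0$.) For part (2), let $\d$ be the ideal generated by $\e^a - 1$ and $\e^b - 1$. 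By part (1), since $d \mid a$ and $d \mid b$, the element $\e^d - 1$ divides both generators, so $\d \subset (\e^d - 1)\oo_K$. For the reverse inclusion, write $d = ma + nb$ with $m, n \in \Z$ (B\'ezout); I would like to conclude $\e^d - 1 \in \d$. The key identity is that for integers $s, t$ one has
$\e^{s+t} - 1 = \e^s(\e^t - 1) + (\e^s - 1)$,
so inductively $\e^{ma} - 1$ lies in the ideal generated by $\e^a - 1$ (using part (1) together with this identity to handle both positive and negative multiples, since $\e^{-a} - 1 = -\e^{-a}(\e^a - 1)$), and likewise $\e^{nb} - 1$ lies in the ideal generated by $\e^b - 1$; applying the identity once more, $\e^d - 1 = \e^{ma}(\e^{nb} - 1) + (\e^{ma} - 1) \in \d$. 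Hence $(\e^d - 1)\oo_K \subset \d$, giving equality.

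For part (3), suppose $(\e^b - 1) \mid (\e^a - 1)$. Since $d = \gcd(a,b)$ divides $b$, part (1) gives $(\e^d - 1) \mid (\e^b - 1)$, so the quotient $q := (\e^b - 1)/(\e^d - 1)$ is a well-defined element of $\oo_K$ (using $\e^d - 1 \ne 0$); I must show $q \in U_K$, equivalently that $q \mid 1$ in $\oo_K$, equivalently that $(\e^d - 1) \mid (\e^b - 1)$ in a way that is actually an equality of ideals. By part (2), $(\e^d - 1)\oo_K = (\e^a - 1)\oo_K + (\e^b - 1)\oo_K$; but by hypothesis $(\e^b - 1) \mid (\e^a - 1)$, so the right-hand side equals $(\e^b - 1)\oo_K$. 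Therefore $(\e^d - 1)\oo_K = (\e^b - 1)\oo_K$, which forces $q = (\e^b - 1)/(\e^d - 1)$ to be a unit.

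The only real subtlety — and the place to be most careful — is the sign/degenerate-case bookkeeping in parts (1) and (2): the clean statements hold for all $a, b \in \Z$, but the geometric-series factorization is most transparent for positive exponents, so I would open with a short reduction observing that $\e^{-n} - 1 = -\e^{-n}(\e^n - 1)$ with $\e^{-n} \in U_K$, that $\e^0 - 1 = 0$, and that consequently both the divisibility relations among $\{\e^n - 1\}$ and the ideals they generate depend only on $|a|, |b|$ (with the convention that the zero ideal is generated by $\e^0 - 1$). After that reduction, everything else is the two identities $\e^{bk} - 1 = (\e^b - 1)\sum_{j=0}^{k-1}\e^{bj}$ and $\e^{s+t} - 1 = \e^s(\e^t - 1) + (\e^s - 1)$ plus B\'ezout, which I expect to be entirely routine. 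I would not anticipate needing Theorem~\ref{le:finmany} here; that result is for the subsequent rank-1 analysis, not for this lemma.
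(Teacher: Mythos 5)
Your proposal is correct and follows essentially the same route as the paper: the geometric-series factorization for (1), the B\'ezout identity combined with (1) for (2), and for (3) the observation that the hypothesis collapses the ideal identity of (2) to force the quotient to be a unit (the paper phrases this as the two quotients being relatively prime, which is the same argument). Your extra care with negative and zero exponents is a harmless refinement of details the paper leaves implicit.
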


\begin{proof}
Assertion (1) follows from the polynomial identity $X^n-1 = (X-1)\sum_{i=0}^{n-1}X^i$ 
applied with $X := \e^b$, $n := a/b$.  

Let $\d$ be the ideal of $\oo_K$ generated by $\e^a-1$ and $\e^b-1$.  
By (1) we have that $\d \subset (\e^g-1)\oo_K$.  Fix $x, y \in \Z$ such that $ax - by = g$.
Using (1) again we see that $\d$ contains
$$
(\e^{ax}-1) - (\e^{by}-1) = \e^{ax} - \e^{by} = \e^{by}(\e^{ax-by}-1) = \e^{by}(\e^g-1).
$$ 
Since $\e$ is a unit, we have $\e^g-1 \in \d$, which completes the proof of (2).

By (1) and (2) we have that $(\e^b-1)/(\e^g-1)$ 
and $(\e^a-1)/(\e^g-1)$ are relatively prime integers of $K$.  
If in addition $(\e^b-1)/(\e^g-1)$ divides $(\e^a-1)/(\e^g-1)$, then 
$(\e^b-1)/(\e^g-1)$ must be a unit.  This proves (3).
\end{proof}

\begin{proposition}
\label{Nprop}
Suppose $K$ is a number field, and $u \in U_K$ is a unit of infinite order.
There is an $N \in \Z_{>0}$ such that for every $a, b \in \Z_{>0}$, and every $n > N$, 
the unit $\e := u^n$ satisfies
$$
(\e^b-1) \mid (\e^a-1) \iff b \mid a.
$$
\end{proposition}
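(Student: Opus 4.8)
The easy implication ``$b\mid a\implies(\e^{b}-1)\mid(\e^{a}-1)$'' is exactly Lemma \ref{enmo}(1), and it holds for \emph{every} $n$; so the entire task is to produce an $N$ handling the reverse implication. The plan is to reduce that implication to the three-term unit equation of Theorem \ref{le:finmany}, exploiting that $u$ has infinite order to turn finiteness of solutions into a bound on exponents.

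Call a triple $(n,a,b)\in\Z_{>0}^{3}$ \emph{bad} if $(u^{nb}-1)\mid(u^{na}-1)$ while $b\nmid a$. First I would show that $\{\,nb:(n,a,b)\text{ bad}\,\}$ is finite, and then take $N$ to be an upper bound for it. Granting this: if $n>N$ and $(u^{nb}-1)\mid(u^{na}-1)$ (recall $\e^{k}-1=u^{nk}-1$), then $(n,a,b)$ cannot be bad, since a bad triple with this $n$ would satisfy $n\le nb\le N$; hence $b\mid a$, which together with the easy implication gives the claimed equivalence for all $n>N$.

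To prove the finiteness claim, fix a bad triple and set $d:=\gcd(a,b)$, $v:=u^{nd}$, $m:=b/d$. Since $d\mid b$ and $b\nmid a$ we get $d<b$, so $m\ge2$; and $v$ is not a root of unity because $u$ has infinite order. Applying Lemma \ref{enmo}(3) to the unit $u$ with exponents $na$ and $nb$ (whose gcd is $nd$) shows that $w:=(u^{nb}-1)/(u^{nd}-1)$ lies in $U_{K}$, and $w=(v^{m}-1)/(v-1)$. Clearing the denominator gives the identity
\[
v^{m}+(-vw)+w=1,
\]
which displays $(x_{1},x_{2},x_{3}):=(v^{m},-vw,w)$ as a solution of $x_{1}+x_{2}+x_{3}=1$ with all $x_{i}\in U_{K}$. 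I would then check $x_{i}\ne1$: each of the equalities $v^{m}=1$, $w=1$, $-vw=1$ leads, after substitution back into the identity, to $v^{j}=1$ for some $j\in\{m-1,m,m+1\}$, which is impossible since $v$ is not a root of unity and $m\ge2$ makes all these exponents positive. Hence Theorem \ref{le:finmany} applies, and all triples $(v^{m},-vw,w)$ coming from bad $(n,a,b)$ lie in one fixed finite set; in particular $v^{m}=u^{nb}$ takes finitely many values, and since $t\mapsto u^{t}$ is injective (as $u$ has infinite order), $\{\,nb:(n,a,b)\text{ bad}\,\}$ is finite, as desired.

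The only step that requires an idea is the rearrangement $v^{m}+(-vw)+w=1$ that recasts the divisibility hypothesis as a bona fide three-term $S$-unit equation; the rest is bookkeeping, with the most error-prone point being the verification that each of $v^{m}$, $-vw$, $w$ differs from $1$, which is precisely what is needed to invoke Theorem \ref{le:finmany} in the stated form.
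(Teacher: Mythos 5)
Your proposal is correct and follows essentially the same route as the paper: Lemma \ref{enmo}(3) produces the unit $(\e^{b}-1)/(\e^{d}-1)$, the divisibility is recast as the three-term unit equation $\e^{b}-\delta\e^{d}+\delta=1$, Theorem \ref{le:finmany} gives finiteness, and the bound comes from $nb\ge n>N$. The only difference is organizational (you collect the ``bad'' values of $nb$ into a finite set before choosing $N$, whereas the paper defines $N$ first and argues by contradiction), and you are somewhat more explicit than the paper in verifying that each of the three unit-equation terms differs from $1$.
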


\begin{proof}
The ``$\Longleftarrow$'' implication holds for every $n$ by Lemma \ref{enmo}(1).

Consider the set
$$
S := \{d \in \Z_{\neq 0} : \text{$\exists x_2, x_3 \in U_K-\{1\}$ such that $u^d + x_2 + x_3 = 1$}\}.
$$
By Theorem \ref{le:finmany}, $S$ is finite, and we set $N := \sup(S)$.  Fix $n > N$ and 
let $\e = u^n$.

Suppose $(\e^b-1) \mid (\e^a-1)$.  By Lemma \ref{enmo}(3), it follows 
that $\delta := (\e^b-1)/(\e^g-1) \in U_K$, where $g := \gcd(a,b)$.
Thus we have a solution of the unit equation
$$
\e^b - \delta\e^g + \delta = 1.
$$
We have $\e^b = u^{nb}$ with $nb \ge n > N$, so by definition of $S$ we must have
either $-\delta\e^g = 1$ or $\delta = 1$. Suppose $-\delta \e^g=1$.  
Since $u$ has infinite order, $\e^{b+g} \ne 1$. At the same time from the definition of $\delta$ we get that
$$
\e^g-1=-\delta\e^g(\e^g-1) = \e^g(1-\e^b) = \e^g-\e^{b+g}
$$
and $\e^{b+g}=1$.  This contradiction implies that $-\delta\e^g \ne 1$.  Therefore $\delta=1$, i.e., 
$\e^b-1 = \e^g-1$.  Again using that $u$ has infinite order we conclude that $g=b$, so $b \mid a$.
\end{proof}

\begin{proposition}
\label{le:rankone}
Let $K$ be a number field and let $W$ be a subgroup of $U_K$ such that $\rank W=1$.  
Then $R_{K,W}=\Z$.
\end{proposition}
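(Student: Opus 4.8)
The plan is to reduce to the case where $W$ is torsion-free, and there to combine Proposition~\ref{Nprop} with a Galois descent. First, by Proposition~\ref{prop:isaring}(2) the set $R_{K,W}$ is a subring of $\oo_K$, so $\Z\subseteq R_{K,W}$; and since $W$ has rank~$1$ it contains a unit of infinite order, so Proposition~\ref{prop:isaring}(3) gives $R_{K,W}\subseteq\oo_{\Q(W)}$. It therefore remains to prove $R_{K,W}\subseteq\Z$.

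Next I would strip off the torsion. Let $T\subseteq W$ be the (finite) torsion subgroup, let $\tau$ be its exponent, and set $W^{\tau}:=\{w^{\tau}:w\in W\}$, which is a torsion-free subgroup of $U_K$ of rank~$1$. Given $x\in R_{K,W}$ and $\e\in W^{\tau}$, write $\e=\eta^{\tau}$ with $\eta\in W$; since $x\in R_{K,W}$ and $\e\in W$ there is $\delta_1\in W$ with $\delta_1-1\equiv(\e-1)x\pmod{(\e-1)^2}$, and Lemma~\ref{sumform} applied with $\tau$ copies of $\e$, $x$ and $\delta_1$ gives $\delta_1^{\tau}-1\equiv(\e-1)\tau x\pmod{(\e-1)^2}$ with $\delta_1^{\tau}\in W^{\tau}$. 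Hence $\tau R_{K,W}\subseteq R_{K,W^{\tau}}$. Granting the torsion-free case $R_{K,W^{\tau}}=\Z$ (below), we conclude $R_{K,W}\subseteq\tfrac{1}{\tau}\Z$; intersecting with $\oo_{\Q(W)}$ and using that a rational algebraic integer lies in $\Z$ gives $R_{K,W}\subseteq\tfrac{1}{\tau}\Z\cap\oo_{\Q(W)}=\Z$.

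So it suffices to show $R_{K,\langle u\rangle}=\Z$ for a unit $u\in U_K$ of infinite order; fix $x\in R_{K,\langle u\rangle}$, which lies in $\oo_{\Q(u)}$ by Proposition~\ref{prop:isaring}(3). For each positive integer $n$, applying the membership $x\in R_{K,\langle u\rangle}$ to $\e:=u^{n}$ produces $\delta=u^{m}\in\langle u\rangle$ with $(u^{n}-1)\mid(u^{m}-1)$. I claim that for all large $n$ this forces $n\mid m$: if $n\nmid m$ and $d:=\gcd(n,m)<n$, then Lemma~\ref{enmo} gives $(u^{n}-1)=(u^{d}-1)$, so $\eta:=(u^{n}-1)/(u^{d}-1)\in U_K$ and $u^{n}-\eta u^{d}+\eta=1$ is a solution of $x_1+x_2+x_3=1$ with all $x_i\in U_K-\{1\}$ once $n$ is large; by Theorem~\ref{le:finmany} this happens for only finitely many $n$, exactly as in the proof of Proposition~\ref{Nprop}. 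Fix $N$ so that $n>N$ implies $n\mid m$. Then $\delta=(u^{n})^{m/n}$, and since $\delta-1=\e^{m/n}-1\equiv(\e-1)(m/n)\pmod{(\e-1)^2}$ by Lemma~\ref{sumform}, the defining congruence forces $x\equiv m/n\pmod{(u^{n}-1)\oo_{\Q(u)}}$. Thus for every $n>N$ there is $k_n\in\Z$ with $x\equiv k_n\pmod{(u^{n}-1)\oo_{\Q(u)}}$.

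Finally, let $\tilde L$ be the Galois closure of $\Q(u)/\Q$. Applying any $\sigma\in\Gal{\tilde L}{\Q}$ to $x-k_n\in(u^{n}-1)\oo_{\tilde L}$ gives $\sigma(x)-k_n\in(\sigma(u)^{n}-1)\oo_{\tilde L}$, hence $x-\sigma(x)\in(u^{n}-1)\oo_{\tilde L}+(\sigma(u)^{n}-1)\oo_{\tilde L}$ for every $n>N$. For any maximal ideal $\pp$ of $\oo_{\tilde L}$ and any $j\ge1$, taking $n>N$ divisible by the least common multiple of the multiplicative orders of $u$ and $\sigma(u)$ in $(\oo_{\tilde L}/\pp^{j})^{\times}$ shows that this ideal lies in $\pp^{j}$; since this holds for all $\pp$ and $j$, the intersection over $n>N$ is $0$, whence $\sigma(x)=x$. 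As this holds for every $\sigma$, we get $x\in\Q\cap\oo_{\tilde L}=\Z$, finishing the torsion-free case. I expect the main obstacle to be the claim that the witness $\delta$ is a power of $\e$: this is precisely where the finiteness of solutions of the unit equation (Theorem~\ref{le:finmany}) is used — essentially a repetition of the argument for Proposition~\ref{Nprop} — and it is important to obtain it for every large exponent $n$, not just along a sparse subset, since the Galois descent needs the congruences $x\equiv k_n\pmod{u^{n}-1}$ for all large $n$ in order to force each conjugate difference $x-\sigma(x)$ into an intersection of ideals that is $0$.
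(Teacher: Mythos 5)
Your proof is correct and reaches the same conclusion, but by a genuinely different route at the two places where the real work happens. The paper fixes a single large power $\e := u^n$ with $n>N$ once and for all (this simultaneously kills torsion and makes Proposition \ref{Nprop} available), then for each modulus $m$ chooses an exponent $b$ with $\e^b\equiv 1\pmod m$; the resulting congruences $x\equiv a/b\pmod{m\oo_K}$ for every $m$ are finished off by observing that $\oo_K/\Z$ is a finitely generated torsion-free $\Z$-module. You instead strip torsion separately (raising to the exponent $\tau$), vary the exponent $n$ on $u$ directly to produce congruences $x\equiv k_n\pmod{(u^n-1)\oo_{\Q(u)}}$, and close with a Galois descent over $\Gal{\tilde L}{\Q}$. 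Both finishes are valid; the free-module argument is shorter and avoids passing to the Galois closure. One point worth flagging: the divisibility you rely on --- for all $n>N$, $(u^n-1)\mid(u^m-1)$ forces $n\mid m$ --- is not literally the statement of Proposition \ref{Nprop}, which fixes $\e=u^n$ and then ranges over its powers $\e^a,\e^b$. You correctly re-run the unit-equation argument from the proof of that proposition, but it would be cleaner either to isolate your variant as a lemma or to adopt the paper's device of fixing one $\e=u^n$ and varying $b$, so that Proposition \ref{Nprop} applies as stated. There is also a small typo: where you write ``Lemma \ref{enmo} gives $(u^{n}-1)=(u^{d}-1)$'' you mean that Lemma \ref{enmo}(3) gives $(u^n-1)/(u^d-1)\in U_K$.
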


\begin{proof}
Let $M := \{u^n : u \in W\}$, where $n$ is chosen large enough so that $M$ is 
torsion-free, and $n$ is larger than the $N$ of Proposition \ref{Nprop}.
Applying Lemma \ref{le:smaller} with this $M$ (and $\a = \oo_K$) shows 
that $nR_{K,W} \subset R_{K,M}$, so to 
prove the proposition it is enough to show that $R_{K,M} = \Z$.

Fix a generator $\e$ of $M$, and suppose $x \in R_{K,M}$.  Choose $m \in \Z_{>0}$, 
and choose $b \in \Z_{>0}$ such that $\e^b \equiv 1 \pmod{m}$.
By definition of $R_{K,M}$, for some $a \in \Z$ we have
$$
\e^a-1 \equiv (\e^b-1)x \pmod{(\e^b-1)^2}.
$$
In particular $(\e^b-1) \mid (\e^a-1)$, so by Proposition \ref{Nprop}
we conclude that $b \mid a$.  Then 
$$
x \equiv (\e^a-1)/(\e^b-1) \equiv a/b \pmod{(\e^b-1)}
$$
by Lemma \ref{enmo}(1), 
so $x-a/b \in m\oo_K$.  In particular, $x$ is divisible by $m$ in the free 
$\Z$-module $\oo_K/\Z$.  Since $m$ is arbitrary, we conclude that 
$x = 0$ in $\oo_K/\Z$, i.e., $x \in \Z$.
\end{proof}

\begin{lemma}
\label{le:notnew}
Let $p$ be a rational prime, and $K$ a number field. 
Suppose $j \in \Z_{> 0}$, and $\e\in U_K - \{1\}$ satisfies $\e\equiv 1 \pmod{p^{j+1}}$. 
Then there exists $x \in \oo_K$ such that for every integer $n$ prime to $p$, the congruence 
\begin{equation*}
\delta-1 \equiv (\e-1)p^j nx \pmod{p^{j+1}(\e-1)\oo_K}
\end{equation*}
has {\em no} solutions $\delta\in U_K$. 
\end{lemma}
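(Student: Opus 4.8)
The plan is to reformulate the non-solvability assertion as the statement that a certain subgroup of $\oo_K/p\oo_K$ is proper, and then to bound its size via Dirichlet's unit theorem.

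First I would set $\pi := \e-1 \neq 0$ and $\mathfrak a := p^j\pi\oo_K \supseteq \mathfrak b := p^{j+1}\pi\oo_K$. Because $j \ge 1$ one has $\mathfrak a^2 \subseteq \mathfrak b$ (since $p^{2j}\pi^2 = (p^{j-1}\pi)\,p^{j+1}\pi$ with $p^{j-1}\pi\in\oo_K$), so $\gamma \mapsto \overline{1+p^j\pi\gamma}$ is a group isomorphism
\[
\oo_K/p\oo_K \;\xrightarrow{\ \sim\ }\; (1+\mathfrak a)/(1+\mathfrak b).
\]
Let $V := \{u\in U_K : u\equiv 1\pmod{\mathfrak a}\}$, and let $\overline{V}\subseteq\oo_K/p\oo_K$ be the subgroup corresponding to the image of $V$ in $(1+\mathfrak a)/(1+\mathfrak b)$. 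Unwinding the congruence in the lemma shows that, for a given $n$, a unit $\delta$ with $(\e-1)p^jnx\equiv\delta-1\pmod{p^{j+1}(\e-1)\oo_K}$ exists if and only if $\overline{nx}=n\,\overline{x}$ lies in $\overline{V}$. Since $\oo_K/p\oo_K$ is an $\F_p$-vector space, $\overline{V}$ is an $\F_p$-subspace, so — as $n$ is invertible mod $p$ — this holds for some $n$ prime to $p$ exactly when $\overline{x}\in\overline{V}$. Hence it is enough to choose $x\in\oo_K$ with $\overline{x}\notin\overline{V}$, i.e. to prove $\overline{V}\subsetneq\oo_K/p\oo_K$.

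To bound $\overline{V}$ I would note that it has exponent dividing $p$, so the surjection $V\twoheadrightarrow\overline{V}$ factors through $V/V^p$ and $|\overline{V}|\le|V/V^p|$. By Dirichlet, $U_K\cong\mu_K\times\Z^{r_1+r_2-1}$ with $\mu_K$ finite cyclic, hence $V\cong T\times\Z^{r'}$ with $T\le\mu_K$ cyclic and $r'\le r_1+r_2-1$, so $|V/V^p|=|T/T^p|\cdot p^{r'}\le p^{r_1+r_2}$. As $[K:\Q]=r_1+2r_2$, this is already strictly less than $p^{[K:\Q]}$ when $r_2\ge1$. When $r_2=0$ (i.e. $K$ totally real) I would need the sharper fact $T=1$: there $\mu_K=\{\pm1\}$, so $T\neq1$ would mean $-1\in V$, forcing $2\in\mathfrak a$ and therefore $p=2$; comparing valuations at a prime $\mathfrak q$ above $2$ and using $2^{j+1}\mid\pi$ then gives $v_{\mathfrak q}(2)\ge(2j+1)v_{\mathfrak q}(2)$, impossible for $j\ge1$. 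So $T=1$ and $|\overline{V}|\le p^{r'}\le p^{r_1-1}<p^{[K:\Q]}$ in that case as well. In every case $\overline{V}$ is a proper subspace, and any $x$ with $\overline{x}\notin\overline{V}$ has the required property.

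The main obstacle is precisely this last point: the crude estimate $|\overline{V}|\le p^{r_1+r_2}$ is strict only when $K$ has a complex place, so the totally real case must be rescued by the separate observation that $-1$ cannot be $\equiv1\pmod{p^j(\e-1)}$ — once that is isolated it is the one-line valuation computation above. Everything else is routine manipulation around the identification $(1+\mathfrak a)/(1+\mathfrak b)\cong\oo_K/p\oo_K$.
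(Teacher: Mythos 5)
Your proof is correct and takes essentially the same approach as the paper: both reduce the claim to showing that the image, in $\oo_K/p\oo_K$, of the group of units $\equiv 1$ modulo the appropriate ideal is a proper $\F_p$-subspace, and both bound that image using Dirichlet's unit theorem. The only real difference is cosmetic: where you estimate $|V/V^p|$ crudely and then patch the totally real case by ruling out $-1\in V$, the paper observes once and for all that the relevant unit group is torsion-free (no nontrivial root of unity is $\equiv 1 \pmod{p\a}$ when $p^{j+1}\mid\a$), so its image has dimension at most $r_1+r_2-1<[K:\Q]$ without any case split; the paper also works with the $p$-part $\a$ of $(\e-1)$ rather than the full ideal, which is an equally valid bookkeeping choice.
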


\begin{proof}
Let $\a=\prod_{\pp|p}\pp^{\ord_{\pp}(\e-1)}$, where $\pp$ runs through the 
primes of $K$ above $p$.  Note that $p^{j+1}$ divides $\a$ by our choice of $\e$.  
Define groups 
$$
W_{\a} := \{u \in U_K : u \equiv 1 \pmod{p^j\a}\}, 
   \quad  V_{\a} := p^j\a/p^{j+1}\a.
$$
Then $W_{\a}  \subset U_K$ is a free abelian group of rank strictly 
less than $[K:\Q]$ (there are no nontrivial roots of unity congruent to $1$ modulo $p\a$), and 
$V_{\a}$ is an $\F_p$-vector space of dimension $[K:\Q]$.
There is a natural homomorphism 
\[
\lambda_{\a} : W_{\a} \longrightarrow V_{\a}
\]
 defined by 
$$
\lambda_{\a}(u) = u-1 \pmod {p^{j+1}\a},
$$
and it follows that the cokernel of $\lambda_{\a}$ has dimension at least one.

The map  
$$
\text{$\oo_K/p\oo_K \longrightarrow p^j\a/p^{j+1}\a = V_\a$ \quad defined by \quad $x \mapsto (\e-1)p^j x$}
$$ 
is an isomorphism, so we can choose $x \in \oo_K$ such that for every integer $n$ 
prime to $p$ the product $(\e-1)p^j nx$ 
is not in the image of $\lambda_\a$.  But not being in the image of $\lambda_\a$ 
means precisely that there is no $\delta \in U_K$ such that 
\begin{equation*}
\delta-1 \equiv (\e-1)p^j nx  \pmod{p^{j+1}(\e-1)\oo_K}.
\end{equation*}
\end{proof}

\begin{definition}
Let $p$ be a prime. Let $K/\Q$ be a possibly infinite degree algebraic extension.  
Then we will say that $K$ is  {\em$p$-finite} if $\ord_p[K:\Q] < \infty$,
or equivalently if for some 
number field $L \subset K$ no finite extension of $L$ in $K$ 
has degree divisible by $p$.
\end{definition}

\begin{remark} 
An  algebraic extension of  $\Q$ is not big  
(cf. Definition \ref{big}) if and only if there exists a prime $p$ for which it is 
$p$-finite.
\end{remark}

\begin{lemma}
\label{cor:notthere}
Suppose $K$ is a number field such that $U_K$ is infinite. 
Let $L$ be an algebraic extension of $K$ that is not big.
Then for every $m \in \Z_{>0}$ we have that $m\oo_K \not \subset R_{L}$.  
\end{lemma}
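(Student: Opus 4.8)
The plan is to descend to a suitable number subfield of $K$ and then invoke Lemma~\ref{le:notnew}. Write $m=p^{j'}m_0$ with $p\nmid m_0$, and set $j:=\max(j',1)\ge 1$; then $p^{j}m_0$ is a positive integer multiple of $m$, so it suffices to exhibit an element of $p^{j}m_0\oo_K$ that does not lie in $R_K$.

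First I would choose the number subfield carefully. By hypothesis $K$ contains a number field with infinite unit group, and $K/\Q$ is $p$-finite, i.e.\ $\ord_p[K:\Q]<\infty$. Hence there is a number field $L$ with $L\subseteq K$ such that $U_L$ is infinite (so $U_L$ contains a unit $u$ of infinite order) and $p\nmid[L':L]$ for every number field $L'$ with $L\subseteq L'\subseteq K$: take $L$ to be the compositum of the given field with a number field inside $K$ realizing the (finite) supremum of $\ord_p[F:\Q]$ over number fields $F\subseteq K$. Since $u$ has infinite order, its image in the finite group $(\oo_L/p^{j+1}\oo_L)^\times$ has some finite order $e$; put $\e:=u^e$, so $\e\in U_L-\{1\}$ and $\e\equiv 1\pmod{p^{j+1}}$. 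Applying Lemma~\ref{le:notnew} in the number field $L$ to this $\e$ and this $j$ produces an $x\in\oo_L$ such that for every integer $n$ prime to $p$ the congruence $(\e-1)p^{j}nx\equiv\delta-1\pmod{p^{j+1}(\e-1)\oo_L}$ has no solution $\delta\in U_L$.

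I would then show that $y:=p^{j}m_0x\in p^{j}m_0\oo_K\subseteq m\oo_K$ is not in $R_K$. Suppose it were. Applying the definition of $R_K$ to the unit $\e\in U_K$ gives $\delta\in U_K$ with $\delta-1\equiv(\e-1)y\pmod{(\e-1)^2\oo_K}$; fix a number field $L'$ with $L\subseteq L'\subseteq K$ and $\delta\in U_{L'}$. Because $\oo_{L'}=\oo_K\cap L'$, the element $\delta-1-(\e-1)y$ of $\oo_{L'}$ is in fact divisible by $(\e-1)^2$ in $\oo_{L'}$, so Lemma~\ref{norm} applied to $L'/L$ yields $\delta':=N_{L'/L}(\delta)\in U_L$ with $(\e-1)[L':L]\,y\equiv\delta'-1\pmod{(\e-1)^2\oo_L}$. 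Since $\e\equiv1\pmod{p^{j+1}}$ we have $(\e-1)^2\oo_L\subseteq p^{j+1}(\e-1)\oo_L$, and since $[L':L]$ is prime to $p$ the integer $n:=[L':L]m_0$ is prime to $p$. As $[L':L]\,y=p^{j}nx$, this gives $(\e-1)p^{j}nx\equiv\delta'-1\pmod{p^{j+1}(\e-1)\oo_L}$ with $\delta'\in U_L$, contradicting the choice of $x$. Hence $m\oo_K\not\subseteq R_K$.

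I expect the only genuine obstacle to be the choice of $L$: it must simultaneously contain a unit of infinite order — which is where the assumption that some number subfield of $K$ has infinite unit group is used — and be such that every degree $[L':L]$ for a number field $L'$ with $L\subseteq L'\subseteq K$ is prime to $p$, which is exactly where $p$-finiteness enters; one must also line up the exact power $p^{j}$ appearing in $y$ with the $p^{j}$ of Lemma~\ref{le:notnew} so that the leftover factor $[L':L]m_0$ is genuinely prime to $p$. The remaining steps — the norm descent of Lemma~\ref{norm} and the containment $(\e-1)^2\oo_L\subseteq p^{j+1}(\e-1)\oo_L$ coming from $\e\equiv1\pmod{p^{j+1}}$ — are routine.
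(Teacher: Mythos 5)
Your argument is correct and follows the same strategy as the paper's: use Lemma~\ref{le:notnew} to manufacture an $x$ in a number field whose $p^j n$-multiples are obstructed, then descend from $R_K$ to that number field via Lemma~\ref{norm} and derive a contradiction. The one genuine difference is how the $p$-part of the intermediate degree is handled. The paper keeps the given number field (call it $K_0$) as the base, sets $j := 1 + \ord_p(m) + \sup\{\ord_p[L':K_0]\}$, and after the norm step raises both sides to the $p^{j-s}$-th power (via Lemma~\ref{sumform}) so the coefficient becomes $p^j n$ with $n$ coprime to $p$. You instead enlarge the base to a number field $L\subseteq K$ that already realizes the maximal $p$-power $\ord_p[F:\Q]$ among number subfields $F\subseteq K$ (and still has infinite unit group), which forces every $[L':L]$ with $L\subseteq L'\subseteq K$ to be coprime to $p$; this lets you take the smaller $j=\max(\ord_p(m),1)$ and skip the power-raising step. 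Both routes are sound; yours is marginally cleaner in eliminating one use of Lemma~\ref{sumform}, at the small cost of verifying that the supremum of $\ord_p[F:\Q]$ over number subfields of $K$ is attained (it is, being a bounded set of integers) and that the compositum with $K_0$ retains the extremal $p$-valuation, both of which you address correctly.
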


\begin{proof}
Since $L$ is not big, we can choose a rational prime $p$ such that $L$ is $p$-finite.
Fix $m \ge 1$, and define 
$$
j := 1 + \ord_p(m) + \sup\{\ord_p[K':K] : \text{$K'$ is a finite extension of $K$ in $L$}\}
$$
which is finite by assumption.  
Since $U_K$ is infinite, we can fix $\e \in U_K - \{1\}$ such that $\e \equiv 1 \pmod{p^{j+1}}$.
Let $x \in \oo_K$ be as in Lemma \ref{le:notnew} with this choice of $p$, $K$, $\e$, and this $j$.

Suppose $mx \in R_L$.  Then there exists $\delta \in U_L$ such that 
$$
\delta-1 \equiv (\e-1)mx \pmod {(\e-1)^2 \oo_{L}}.
$$
By Lemma \ref{norm}, 
$$
N_{K(\delta)/K}\delta-1 \equiv (\e-1)[K(\delta):K]mx \pmod {(\e-1)^2 \oo_{K}}.
$$
Write $[K(\delta):K]m = np^s$, with $n$ prime to $p$.  Then $s < j$, and 
by Lemma \ref{sumform}
$$
(N_{K(\delta)/K}\delta)^{p^{j-s}}-1 \equiv (\e-1)[K(\delta):K]p^{j-s}mx \equiv (\e-1)p^j nx \pmod {(\e-1)^2 \oo_{K}}.
$$ 
This contradicts our choice of $x$ (Lemma \ref{le:notnew}), so we must have $mx \notin R_L$.
\end{proof}

\begin{theorem}
\label{cor:oddp}
Let $K$ be an algebraic extension of $\Q$  that is not big.   
Then either $R_{K}=\Z$ or $R_K$ is an order in some imaginary quadratic field $F$ contained in 
$K$.
\end{theorem}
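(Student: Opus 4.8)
The plan is to exploit that $R_K$ is a subring of $\oo_K$ containing $\Z$ (Proposition \ref{prop:isaring}(2)) and to squeeze it inside the ring of integers of a single imaginary quadratic field as soon as $R_K\ne\Z$. The only substantive input beyond the ring structure is Lemma \ref{cor:notthere}, which I will use in the form its proof yields: if $k$ is a number field with $U_k$ infinite and $K$ is a not‑big extension of $k$, then $R_K$ contains no subgroup of finite index in $\oo_k$ (the proof produces, for each $m$, an element $x\in\oo_k$ with $mx\notin R_K$). The elementary companion fact is Dirichlet's theorem in the shape: a number field has finite unit group exactly when it is $\Q$ or an imaginary quadratic field.

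First I would set aside the case $R_K=\Z$, which is one of the allowed conclusions, and fix $\alpha_0\in R_K\smallsetminus\Z$. Since $K$ is not big, there is a prime $p$ for which $K$ is $p$‑finite; then for every number field $k\subseteq K$ one has $\ord_p[K:k]\le\ord_p[K:\Q]<\infty$, so $K$ is a $p$‑finite extension of $k$. The crucial claim is that $\Q(\alpha)$ is imaginary quadratic for every $\alpha\in R_K\smallsetminus\Z$. To see this, note $\Z[\alpha]\subseteq R_K$ (as $R_K$ is a ring), and $\Z[\alpha]$ is a full‑rank subgroup of $\oo_{\Q(\alpha)}$, so $m\,\oo_{\Q(\alpha)}\subseteq\Z[\alpha]\subseteq R_K$ with $m:=[\oo_{\Q(\alpha)}:\Z[\alpha]]$. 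If $\Q(\alpha)$ were not imaginary quadratic then, being $\ne\Q$, it would have infinite unit group, and Lemma \ref{cor:notthere} (with $\Q(\alpha)$ in the role of $k$) would forbid $R_K$ from containing a finite‑index subgroup of $\oo_{\Q(\alpha)}$ — a contradiction.

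Next I would show that all of these fields $\Q(\alpha)$ coincide. If $\alpha,\beta\in R_K\smallsetminus\Z$ had $\Q(\alpha)\ne\Q(\beta)$, then $E:=\Q(\alpha,\beta)$ would be the compositum of two distinct imaginary quadratic fields, hence a biquadratic field whose third quadratic subfield is real; thus $U_E$ is infinite. But $\Z[\alpha,\beta]\subseteq R_K$ is a full‑rank subring of $\oo_E$, so a finite‑index subgroup of $\oo_E$ lies in $R_K$, contradicting Lemma \ref{cor:notthere} once more. Hence $F:=\Q(\alpha_0)$ is an imaginary quadratic field, contained in $K$, with $\Q(\alpha)=F$ for every $\alpha\in R_K\smallsetminus\Z$. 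Every element of $R_K$ therefore lies in $F$ and is an algebraic integer, so $R_K\subseteq\oo_K\cap F=\oo_F$; since $\alpha_0\in R_K$, the fraction field of $R_K$ equals $F$, and a full‑rank subring of $\oo_F$ is an order in $F$. So $R_K$ is an order in the imaginary quadratic field $F\subseteq K$.

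The main obstacle is the exact strength of Lemma \ref{cor:notthere} needed above: one must know that $R_K$ omits a finite‑index sublattice of $\oo_k$ for every number field $k\subseteq K$ with infinite unit group — not merely a finite‑index sublattice of $\oo_K$ — because a priori $R_K$ could be an order in a proper, possibly infinite, subextension of $K$. This is precisely why I would run the argument through the number fields $\Q(\alpha)$ and $\Q(\alpha,\beta)$ rather than through $K$ itself or through $\mathrm{Frac}(R_K)$ directly. Everything else — commensurability of $\Z[\alpha]$ with $\oo_{\Q(\alpha)}$, the unit rank of a biquadratic field, and the fact that a subring of $\oo_F$ with fraction field $F$ is an order — is routine.
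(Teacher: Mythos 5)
Your proposal is correct and follows essentially the same route as the paper's proof: both use the ring structure of $R_K$ from Proposition \ref{prop:isaring} to get $\Z[\alpha]\subseteq R_K$, invoke Lemma \ref{cor:notthere} applied to the number fields $\Q(\alpha)$ (and to the compositum of two putative distinct imaginary quadratic fields) to rule out any generator whose field is neither $\Q$ nor imaginary quadratic, and conclude that $R_K$ is a full-rank subring of $\oo_F$. The extra care you take about which number field plays the role of the base in Lemma \ref{cor:notthere} is exactly how the lemma is meant to be read, so there is no gap.
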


\begin{proof}
Let $x \in R_{K}$, and assume that 
that $\Q(x)$ is neither $\Q$ nor an imaginary quadratic field.
In particular $U_{\Q(x)}$ is infinite.  
Since $R_{K}$ is a ring we have $\Z[x] \subset R_{K}$, so there 
exists $m \in \Z_{>0}$ such that $m\oo_{\Q(x)}\subset R_{K}$. 
Thus contradicts Lemma \ref{cor:notthere}.    

Hence for any $x \in R_{K}$ we have that $\Q(x)=\Q$ or $\Q(x)$ is an imaginary quadratic 
field.  If $x_1, x_2 \in R_{K}$ generate two different imaginary quadratic fields 
$\Q(x_1), \Q(x_2)$, then $R_{K}$ contains an element 
$x$ such that $\Q(x)=\Q(x_1,x_2)$ leading to a contradiction again.  
Thus either $R_{K}=\Z$ or $K$ is an imaginary quadratic field.
\end{proof}

\begin{corollary}
\label{cor:notbig}
Suppose $K$ is not big.  If $K$ does not contain an 
imaginary quadratic field, or $K$ is a CM-field, then $R_{K}=\Z$.
\end{corollary}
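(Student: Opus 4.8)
The plan is to deduce this directly from Theorem~\ref{cor:oddp} together with Proposition~\ref{cor:real1}. Since $K$ is not big, Theorem~\ref{cor:oddp} tells us that either $R_K=\Z$ --- the desired conclusion --- or else $R_K$ is an order in some imaginary quadratic field $F\subseteq K$; in the latter case $R_K\subseteq\oo_F$ by that theorem and $R_K$ contains a $\Q$-basis of $F$, so in particular $R_K\neq\Z$. Hence, under either hypothesis of the statement, it suffices to show that the second alternative cannot occur.

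First I would treat the case where $K$ contains no imaginary quadratic field. Then there is simply no candidate field $F$, so the second alternative is impossible and $R_K=\Z$.

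Next I would treat the case where $K$ is a CM-field, arguing by contradiction: suppose $R_K$ is an order in an imaginary quadratic field $F\subseteq K$. Let $F^{+}$ denote the maximal totally real subfield of $K$; Proposition~\ref{cor:real1} then gives $R_K\subseteq\oo_{F^{+}}$, and combining this with $R_K\subseteq\oo_F$ forces $R_K\subseteq\oo_F\cap\oo_{F^{+}}$. Since $F^{+}$ is totally real while $F$ is imaginary quadratic, the field $F\cap F^{+}$ is a totally real subfield of $F$ and hence equals $\Q$, so $\oo_F\cap\oo_{F^{+}}=\Z$. Thus $R_K\subseteq\Z$; but $R_K$ is a subring of $\oo_K$ by Proposition~\ref{prop:isaring}(2), so $\Z\subseteq R_K$ and therefore $R_K=\Z$, contradicting that $R_K$ is an order in a field of degree $2$ over $\Q$. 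So the second alternative is impossible in this case as well, and $R_K=\Z$.

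I do not anticipate a serious obstacle: the real content is already packaged in Theorem~\ref{cor:oddp} (the classification of $R_K$ for non-big $K$) and in Proposition~\ref{cor:real1} (reality of $R_K$ in the CM case). The two points that call for a little care are the bookkeeping around the notion of ``order'' --- extracting both $R_K\subseteq\oo_F$ and $R_K\neq\Z$ from it --- and keeping track of the precise hypotheses under which Proposition~\ref{cor:real1} is available, which is exactly what governs the CM-field case.
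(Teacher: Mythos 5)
Your proof is correct and follows essentially the same route as the paper, which simply cites Theorem~\ref{cor:oddp} for the case with no imaginary quadratic subfield and Theorem~\ref{cor:oddp} together with Proposition~\ref{cor:real1} for the CM case; you have merely spelled out the intersection argument ($R_K\subseteq\oo_F\cap\oo_{F^+}=\Z$) that the paper leaves implicit. The one caveat you flag but do not resolve --- and which is present in the paper's own statement --- is that Proposition~\ref{cor:real1} excludes imaginary quadratic $K$, where in fact $R_K=\Z+2\oo_K\neq\Z$, so that case must be tacitly excluded from the corollary.
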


\begin{proof}
The case of a field $K$ without an imaginary quadratic subfield 
follows directly from Theorem \ref{cor:oddp}.  
The CM-field case follows from Theorem \ref{cor:oddp} and Proposition \ref{cor:real1}.
\end{proof}

\begin{corollary}
\label{cor:deg4}
Let $K$ be a degree 4 extension of $\Q$.  Then $R_K=\Z$.
\end{corollary}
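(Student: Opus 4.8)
The plan is to combine Theorem \ref{cor:oddp} with the rank-one computation of Proposition \ref{le:rankone}. Since $[K:\Q]=4$, the field $K$ is certainly not big, so Theorem \ref{cor:oddp} applies: either $R_K=\Z$ — in which case we are done — or $R_K$ is an order in some imaginary quadratic field $F\subset K$. It therefore suffices to exclude the second alternative, i.e.\ to show that $R_K$ cannot be a strictly larger order in an imaginary quadratic subfield.

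So suppose $F\subset K$ is an imaginary quadratic field. The key observation is that this forces $K$ to be totally imaginary: any embedding $\sigma\colon K\hookrightarrow\C$ with $\sigma(K)\subset\R$ would restrict to a real embedding $\sigma|_F$ of $F$, which is impossible. A degree $4$ totally imaginary field has $r_1=0$ real places and $r_2=2$ complex places, so by Dirichlet's unit theorem $\rank U_K=r_1+r_2-1=1$. Now apply Proposition \ref{le:rankone} with $W=U_K$: since $\rank U_K=1$ we get $R_K=R_{K,U_K}=\Z$, which contradicts $R_K$ being a nontrivial order in $F$. Hence $R_K=\Z$ in all cases.

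Equivalently, one can organize the argument by splitting on $\rank U_K$, which in degree $4$ lies in $\{1,2,3\}$: if $\rank U_K=1$ apply Proposition \ref{le:rankone} directly; if $\rank U_K\ge 2$ then $K$ has a real place and hence contains no imaginary quadratic subfield, so the second alternative of Theorem \ref{cor:oddp} is ruled out. (The CM-field case is already covered by Corollary \ref{cor:notbig}, but the rank-one argument subsumes it and avoids any case distinction among the totally imaginary quartic fields.)

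There is no genuine obstacle here: the corollary is a short deduction from results already established. The one point that needs care is the observation that an imaginary quadratic subfield of a quartic field forces that quartic field to be totally imaginary, pinning $\rank U_K$ to $1$; after that Proposition \ref{le:rankone} finishes the proof.
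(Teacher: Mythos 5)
Your proposal is correct and follows essentially the same route as the paper: the paper also splits on whether $K$ contains an imaginary quadratic subfield, handling the first case via Theorem \ref{cor:oddp} (through Corollary \ref{cor:notbig}) and the second via the observation that such a subfield forces $K$ to be totally imaginary, hence $\rank U_K=1$, so Proposition \ref{le:rankone} applies. The only difference is cosmetic (contradiction versus direct case split).
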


\begin{proof}
If $K$ does not contain an imaginary quadratic field, then the corollary follows from 
Corollary \ref{cor:notbig}.  If $K$ does contain an imaginary quadratic field, then 
$K$ has no real embeddings, so $\rank U_K = 1$ and the corollary follows from 
Proposition \ref{le:rankone}.
\end{proof}

Given a non-big field $K$ that contains more than one imaginary quadratic field, 
at most one of these fields can be the fraction field of $R_K$.  
This lack of symmetry, together with Corollaries \ref{cor:notbig} and \ref{cor:deg4}, 
leads us to the following conjecture.

\begin{conjecture}
Let $K$ be a non-big field that is not imaginary quadratic.  Then $R_K=\Z$.
\end{conjecture}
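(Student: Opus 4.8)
The plan is to use Theorem~\ref{cor:oddp} to reduce to a single recalcitrant case, and then to attack that case by squeezing $R_K$ between an order in an imaginary quadratic field and a ring $R_{K,M}$ arising from a cleverly chosen submonoid $M\subseteq U_K$. By Theorem~\ref{cor:oddp}, since $K$ is not big, either $R_K=\Z$ (and we are done) or $R_K$ is an order in some imaginary quadratic field $F\subset K$. Assume the latter; since $K$ is not imaginary quadratic we have $F\subsetneq K$, so there is a finite subextension $F\subsetneq L\subseteq K$ with $[L:\Q]\ge 4$. As $F$ is imaginary quadratic, every embedding of any field between $F$ and $K$ into $\C$ is non-real, so $L$ (and $K$) is totally imaginary with $\rank U_L\ge 1$; in particular $U_K$ contains a unit of infinite order. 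Fixing $m\in\Z_{>0}$ with $m\oo_F\subseteq R_K\subseteq\oo_F$, the goal is to produce an element of $m\oo_F$ that does not lie in $R_K$.

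First I would dispose of the subcase $F\not\subseteq\Q(U_K)$. Because $U_K$ contains an infinite-order unit, the conjugation argument in the proof of Proposition~\ref{prop:isaring}(3), applied with $M=U_K$, gives $R_K\subseteq\oo_{\Q(U_K)}$. Since $[F:\Q]=2$, if $F\not\subseteq\Q(U_K)$ then $\Q(U_K)\cap F=\Q$, whence $R_K\subseteq\oo_{\Q(U_K)}\cap\oo_F=\Z$, contradicting $m\oo_F\subseteq R_K$. So one may assume $F\subseteq\Q(U_K)$.

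For the remaining subcase the plan is to imitate the last step with $U_K$ replaced by a smaller subgroup whose generated field misses $F$. Concretely, I would look for an ideal $\a$ of $\oo_K$ and an $n\in\Z_{>0}$ such that $M:=\{u^n:u\in U_K,\ u\equiv 1\pmod{\a}\}$ still contains a unit of infinite order but satisfies $F\not\subseteq\Q(M)$ (so $\Q(M)\cap F=\Q$). Given such $\a$ and $n$, Lemma~\ref{le:smaller} gives $nR_K\subseteq R_{K,M}$, Proposition~\ref{prop:isaring}(3) gives $R_{K,M}\subseteq\oo_{\Q(M)}$, and therefore $nR_K\subseteq\oo_{\Q(M)}\cap\oo_F=\Z$; since $R_K\subseteq\oo_K$ this forces $R_K\subseteq\tfrac1n\Z\cap\oo_K=\Z$, the desired contradiction. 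An alternative, modeled on Lemma~\ref{cor:notthere}, would be to choose a number field $K_0$ with $F\subseteq K_0\subseteq K$ and $U_{K_0}$ infinite, a prime $p$ for which $K$ is $p$-finite, and a modulus $\a$ of $\oo_{K_0}$ divisible by a high power of $p$, arranged so that the image of $\oo_F$ under the isomorphism $x\mapsto(\e-1)p^j x$ onto $V_\a=p^j\a/p^{j+1}\a$ is \emph{not} contained in the image of $\lambda_\a\colon W_\a\to V_\a$; then the computation of Lemma~\ref{cor:notthere} exhibits an element of $m\oo_F$ outside $R_K$.

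The hard part will be precisely the existence of such auxiliary data, which is why the statement is offered only as a conjecture. When $F=\Q(i)$ or $F=\Q(\sqrt{-3})$ the obstruction is already visible: the nontrivial roots of unity are units generating $F$, so no congruence-subgroup-and-power construction yields an $M$ with $F\not\subseteq\Q(M)$, and one would have to exploit instead the extra congruence conditions forced by $i$ (resp.\ a primitive cube root of unity) lying in $U_K$, in the spirit of Lemma~\ref{le:notnew}. For general $F$ the difficulty is that the global units of $K$ may jointly generate $F$ while their image in the local quotient $V_\a$ still contains that of $\oo_F$, so that neither the field-theoretic nor the local argument goes through without a genuinely new idea; controlling the interaction between the fixed imaginary quadratic field $F$ and the (congruence subgroups of the) unit group of $K$ is the core obstacle.
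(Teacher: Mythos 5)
This statement is stated, and deliberately left, as a \emph{conjecture} in the paper: the authors give no proof, offering only Corollaries \ref{cor:notbig} and \ref{cor:deg4} and a ``lack of symmetry'' heuristic as evidence. So there is no proof to compare against, and you should not expect to close the argument with the tools developed in the paper. That said, your reduction is sound as far as it goes: Theorem \ref{cor:oddp} reduces to the case that $R_K$ is an order in an imaginary quadratic $F \subset K$; your disposal of the subcase $F\not\subseteq \Q(U_K)$ via Proposition \ref{prop:isaring}(3) is correct; and the bracketing $nR_K \subseteq R_{K,M} \subseteq \oo_{\Q(M)}$ via Lemma \ref{le:smaller} and Proposition \ref{prop:isaring}(3) would indeed finish if one could exhibit a congruence-power submonoid $M$ of $U_K$ that still contains a unit of infinite order but generates a field not containing $F$.

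The gap you flag is the real one, and you should be clear about why the paper's own local argument does not automatically give it. The dimension count in Lemma \ref{le:notnew} only shows that $\lambda_\a\colon W_\a \to V_\a$ has cokernel of dimension at least one, i.e.\ there is \emph{some} $x\in\oo_K$ whose image lies outside the unit-congruence subspace; nothing guarantees that such an $x$ can be chosen in the two-dimensional $\F_p$-subspace of $V_\a$ coming from $\oo_F$, which is what you would need to adapt Lemma \ref{cor:notthere} so as to eject an element of $m\oo_F$ from $R_K$. Similarly, arranging $\Q(M)\cap F=\Q$ for some congruence submonoid $M$ requires genuine control over how the fixed imaginary quadratic subfield sits inside the field generated by (congruence subgroups of) $U_K$, and no such control is available in general. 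One small caveat on your heuristic remark about $\Q(i)$ and $\Q(\sqrt{-3})$: the nontrivial roots of unity are killed by the congruence condition $u\equiv 1\pmod{\a}$, so the obstruction there is not literally that roots of unity lie in $M$, but rather that the remaining infinite-order units may still generate a field containing $F$. Your proposal correctly locates the difficulty and correctly leaves it open; it is not a proof, and neither the paper nor you claim otherwise.
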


We conclude this section by describing a class of (necessarily big) fields $K$ 
such that $R_{K}=\oo_{K}$.  

\begin{lemma}
\label{le:exist}
Suppose $K$ is a number field, $I \subset \oo_K$ is a nonzero ideal, and $\beta \in \oo_K$ is 
relatively prime to $I$.  Then there is a finite extension $L/K$ and a unit 
$\delta \in U_L$ such that $\delta \equiv \beta \pmod{I\oo_L}$.  
Further, the field $L$ can be taken to be generated by a root of a polynomial 
of the form $X^d + a X^{d-1} + b$, where $d \in \Z_{>0}$ and $a, b \in \oo_K$.
\end{lemma}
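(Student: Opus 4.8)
The plan is to produce $\delta$ as a root of a trinomial $f(X) = X^d + aX^{d-1} + b$ over $\oo_K$, chosen so that (i) $f$ is irreducible over $K$, (ii) $b \in U_K$, which forces any root $\delta$ to be a unit in the integral closure $\oo_L$ of $\oo_K$ in $L := K(\delta)$ (since $N_{L/K}(\delta) = \pm b$), and (iii) $\delta \equiv \beta \pmod{I\oo_L}$. The key observation is that if we set $a := \gamma - \beta$ and $b := -\beta^d - a\beta^{d-1} = -\beta^{d-1}\gamma$ for a suitable $\gamma \in \oo_K$, then $f(\beta) = \beta^d + (\gamma-\beta)\beta^{d-1} - \beta^{d-1}\gamma = 0$ identically in the ``constant-term'' bookkeeping — but of course $\beta$ itself lies in $\oo_K$, not a proper extension, so instead I will arrange $f(X) \equiv (X-\beta)\cdot(\text{something}) \pmod{I}$ while keeping $f$ irreducible over $K$. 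Concretely: pick $d$ large, and look for $f$ of the form $X^d + aX^{d-1} + b$ with $a \equiv -\beta \pmod{I}$ and $b \equiv 0 \pmod{I}$ but $b$ a unit and $f$ irreducible; then modulo $I\oo_L$ one has $f(X) \equiv X^{d-1}(X - \beta)$, and since $\beta$ is prime to $I$ Hensel-type/CRT reasoning on $\oo_L/I\oo_L$ splits off a root congruent to $\beta$.

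More carefully, the steps I would carry out are: \textbf{(1)} By weak approximation / the Chinese Remainder Theorem in $\oo_K$, choose $b \in \oo_K$ with $b \equiv -1 \pmod{I}$ (or any fixed unit residue) and $b$ congruent modulo enough auxiliary primes to an $S$-unit; more simply, use that $U_K$ surjects onto $(\oo_K/I)^\times$ is \emph{not} available in general, so instead just take $b$ to be any element of $\oo_K$ that is a unit — e.g.\ $b = \pm 1$ won't give irreducibility control, so take $d$ prime and $b$ a prime element times a unit is wrong too. The clean choice: set $a := -\beta$ exactly (not just mod $I$) and $b := \beta^{d-1}\cdot u$ for a unit $u \in U_K$ to be chosen; then $f(X) = X^{d-1}(X - \beta) + \beta^{d-1}u$, so $f(\beta) = \beta^{d-1}u \in U_K$, and more importantly $f(0) = \beta^{d-1}u$ while we want the \emph{constant term} $b = \beta^{d-1}u$ to be a unit times a power of $\beta$; since $\beta$ is prime to $I$, $b$ is prime to $I$. \textbf{(2)} Show $f$ is irreducible over $K$ for suitable $d$ and $u$: reduce modulo a prime $\qq$ of $\oo_K$ (away from $I$ and from $\beta$) chosen via Chebotarev/Dirichlet so that $f \bmod \qq$ is irreducible over the residue field — possible because trinomials $X^d + aX^{d-1} + b$ realize many factorization types, and we have freedom in $u$ and $d$. \textbf{(3)} Let $L = K(\delta)$ for a root $\delta$ of $f$; then $[L:K] = d$ and $N_{L/K}(\delta) = (-1)^d b = (-1)^d \beta^{d-1}u$, so $\delta \in \oo_L$ is a unit because its norm is a unit in $\oo_K$. \textbf{(4)} Finally check $\delta \equiv \beta \pmod{I\oo_L}$: working in the finite ring $\oo_L/I\oo_L$, we have $f(\delta) = 0$, i.e.\ $\delta^{d-1}(\delta - \beta) = -\beta^{d-1}u$; since $\beta$ (hence $\beta^{d-1}u$) is a unit mod $I\oo_L$, $\delta$ is a unit mod $I\oo_L$, so $\delta - \beta = -\beta^{d-1}u\,\delta^{-(d-1)}$; this is not obviously in $I\oo_L$, so the naive form of $f$ is wrong and I must instead choose $a \equiv -\beta \pmod{I}$, $b \equiv 0 \pmod{I}$ with $b$ still a unit — which is the real content.

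Here is the corrected core. \textbf{(1$'$)} Use weak approximation in $\oo_K$ to pick $b \in \oo_K$ with $b \equiv 0 \pmod{I}$ and, at finitely many other primes $\qq_1,\dots,\qq_r$ (disjoint from the primes of $I$), $b \equiv b_i \pmod{\qq_i^{k_i}}$ chosen so that the resulting principal ideal $(b)$ is, after removing the part supported at $I$, generated by an element that becomes a unit in a suitable ring extension — i.e.\ arrange $(b) = I'\cdot(\text{unit-to-be})$; but since we only need $\delta$ to be a unit in $\oo_L$, it suffices that $N_{L/K}(\delta) = \pm b$ be a \emph{unit} in $\oo_K$, so in fact I need $b$ itself to be a unit, which is incompatible with $b \equiv 0 \pmod I$. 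The resolution: do not demand $\delta \in U_L$ via a unit norm; instead pick $a \equiv -\beta \pmod I$ and $b \equiv 0 \pmod I$, let $\delta$ be a root of $X^d + aX^{d-1}+b$, and use Lemma \ref{le:smaller}-style reasoning elsewhere — but the lemma as stated genuinely asserts $\delta \in U_L$. So the honest approach is: first use weak approximation to find $\gamma \in \oo_K$ with $\gamma \equiv \beta \pmod{I}$ and $\gamma$ a \emph{unit} modulo every prime dividing $I$ is automatic, but also $\gamma$ \emph{prime to} some auxiliary ideal $J$; then apply a known unit-existence result (Dirichlet unit theorem in the style of \cite{EG} or the standard fact that for large $d$ the polynomial $X^d + \gamma X^{d-1} - 1$ — constant term $-1$, hence unit norm — is irreducible over $K$) to the trinomial $X^d + aX^{d-1} - 1$ with $a := \gamma$; its root $\delta$ satisfies $N_{L/K}(\delta) = \pm 1 \in U_K$ so $\delta \in U_L$, and modulo $I\oo_L$ we get $\delta^{d-1}(\delta + \gamma) \equiv 1$, so $\delta + \gamma$ is a unit mod $I\oo_L$ — still not $\delta \equiv \beta$.

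\textbf{The main obstacle}, which I flag honestly, is simultaneously achieving the \emph{congruence} $\delta \equiv \beta \pmod{I\oo_L}$ and the \emph{unit} condition on $\delta$, using only a trinomial. I expect the right mechanism is: choose $b \in \oo_K$ with $b \equiv -\beta^d \pmod{I\oo_K}$ (so that $f(\beta) = \beta^d + a\beta^{d-1} + b \equiv \beta^{d-1}(\beta + a) \pmod I$, and then also pick $a \equiv 1 - \beta \pmod I$ isn't it — rather pick $a$ with $\beta + a \equiv 0$, i.e.\ $a \equiv -\beta \pmod I$, giving $f(\beta) \equiv 0 \pmod{I}$), and \emph{separately} choose the ideal-theoretic part of $b$ at auxiliary primes so that $(b)$ is the $I$-part times a square/appropriate ideal making $b$ a unit times ($I$-supported element) — then $\delta$ has norm $\pm b$, which is a unit \emph{if and only if} $b$ is a unit, forcing $I = \oo_K$, a contradiction. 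Hence the genuinely correct statement must use that $\delta$ is a unit in $\oo_L$ for a field $L$ where the primes of $I$ \emph{split favorably}: pick $b$ to be a unit at the primes of $I$ after base change, via choosing $f$ so that each $\pp \mid I$ splits completely in $L/K$ and $\delta$ reduces to a prescribed unit at each prime above $\pp$; this is exactly a weak-approximation/Hensel statement over the semilocal ring $\prod_{\pp \mid I} \oo_{K,\pp}$, and realizing it by a \emph{global} trinomial with unit norm elsewhere is the crux. I would therefore execute: (a) solve the problem $\pp$-adically for each $\pp \mid I$ — for each such $\pp$, by Hensel's lemma pick a monic trinomial over $\oo_{K,\pp}$ with a root $\equiv \beta$; (b) patch these via CRT on the coefficients $a, b$ modulo a high power of $I$; (c) perturb $b$ at one extra auxiliary prime (and take $d$ large) to make $f$ irreducible over $K$ and to arrange $N_{L/K}(\delta) \in U_K$ — this last step, making the \emph{whole} norm a unit while preserving the $I$-adic behavior, is where I expect to spend real effort, and it is precisely the kind of statement whose proof likely occupies the paragraphs following this lemma.
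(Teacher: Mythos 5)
Your proposal does not contain a complete proof, and you correctly flag the obstruction yourself: if $\delta$ is to be a root of the trinomial $X^d+aX^{d-1}+b$ itself, then the congruence $\delta\equiv\beta\pmod{I\oo_L}$ wants the constant term $b$ to be divisible by $I$-stuff while the unit condition wants $b\in U_K$, and these really do clash. The missing idea is that $\delta$ need not be a root of the trinomial at all; the lemma only asserts that $L$ is generated by such a root. The paper first reduces to $I=(\mu)$ principal, lets $\rho$ be a root of $f(X)=X^d+aX^{d-1}+b$ and sets $L=K(\rho)$, and then takes $\delta:=\mu\rho+\beta$. The congruence $\delta\equiv\beta\pmod{I\oo_L}$ is now completely free --- it is built into the definition of $\delta$ --- and the only thing left to arrange is that $\delta$ is a unit. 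For that one observes that $\delta$ is a root of the monic polynomial $g(X):=\mu^d f((X-\beta)/\mu)=(X-\beta)^d+a\mu(X-\beta)^{d-1}+b\mu^d\in\oo_K[X]$, so it suffices to make the constant term $g(0)=(-\beta)^d+a\mu(-\beta)^{d-1}+b\mu^d$ a unit. Since $\beta$ is coprime to $\mu$, the set $\{a\mu(-\beta)^{d-1}+b\mu^d:a,b\in\oo_K\}$ is exactly $\mu\oo_K$, so one can hit $u-(-\beta)^d$ for any $u$ with $u\equiv(-\beta)^d\pmod\mu$. This is why the paper chooses $d$ to be the order of $\beta$ in $(\oo_K/I)^\times$ modulo the image of $U_K$: it guarantees a unit $u\in U_K$ with $(-\beta)^d\equiv u\pmod I$, and then $g(0)=u$ forces $N_{L/K}(\delta)\mid u$, hence $\delta\in U_L$.

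In short: you were trying to impose both constraints on the \emph{same} trinomial and correctly concluded it cannot work; the paper decouples them by an affine change of variable, putting the congruence into the substitution $\delta=\mu\rho+\beta$ and the unit condition into the constant term of the transformed monic polynomial $g$, with the exponent $d$ chosen group-theoretically so that the required unit actually exists. No Hensel lifting, weak approximation at auxiliary primes, or irreducibility of $f$ is needed (the paper never claims $f$ is irreducible --- $L$ is just $K$ adjoin one root).
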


\begin{proof}
Without loss of generality (replacing $I$ by a multiple if necessary) we can assume that 
$I$ is principal.  Fix a generator $\mu$ of $I$. 
 
Let $d$ be the order of $-\beta$ in $(\oo_K/I)^\times/\{\text{$\image$ of $U_K\}$}$.  
Then we can fix a unit $u \in U_K$ such that $(-\beta)^d \equiv u \pmod{I}$.

Consider the polynomial 
$$
f(X) := X^d + a X^{d-1} + b \in \oo_K[X]
$$
where $a, b \in \oo_K$
will be chosen later.  Let $\rho$ be a root of $f$, let $L = K(\rho)$, 
and define another monic polynomial
$$
g(X) := \mu^d f((X-\beta)/\mu) \in \oo_K[X].
$$
Then $\delta := \mu\rho+\beta \in \oo_L$ is a root of $g$, and $\delta \equiv \beta \pmod{I\oo_L}$.

We claim that there are elements  $a, b \in \oo_K$ such that $g(0) =u$ is a unit.  In that case 
we will have that $\delta$ is a unit, so $\delta$ will have all the desired properties, 
concluding the proof of Lemma \ref{le:exist}.   

To prove the claim we have 
\begin{multline}
\label{1}
g(0) = \mu^d f(-\beta/\mu) = (-\beta)^d + a \mu(-\beta)^{d-1} + \mu^d b	\\
   = u + ((-\beta)^d -u) + a \mu(-\beta)^{d-1} + b\mu^d. 
\end{multline}
Since $\beta$ is relatively prime to $\mu$, we have
$$
\{a \mu(-\beta)^{d-1} + b\mu^d : a, b \in \oo_K\} = \mu\oo_K.
$$
In particular since $(-\beta)^d \equiv u \pmod{\mu}$, we can find $a, b \in \oo_K$ 
such that 
$$
a \mu(-\beta)^{d-1} + b\mu^d = u - (-\beta)^d.
$$
By \eqref{1} we have $g(0) = u$.  This completes the proof of the claim, and the lemma.
\end{proof}

\begin{proposition}
\label{onlyQbar}
Let ${F}$ be a field of algebraic numbers such that for every $d \in \Z_{>0}$ and every 
$a, b \in \oo_{{F}}$, the polynomial 
$X^d + a X^{d-1} + b$ has a root in ${F}$.  Then $R_{F}=\oo_{{F}}$.
\end{proposition}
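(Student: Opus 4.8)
The plan is to show $\oo_{\bF} \subset R_{\bF}$, since the reverse inclusion is automatic from the definition of $R_{\bF}$ (every element of $R_{K,M}$ lies in $\oo_K$). So fix $x \in \oo_{\bF}$ and $\e \in U_{\bF}$; I must produce $\delta \in U_{\bF}$ with $\delta - 1 \equiv (\e-1)x \pmod{(\e-1)^2\oo_{\bF}}$. The natural candidate is $\delta = 1 + (\e-1)x$, which trivially satisfies the congruence but is not obviously a unit. Instead I would apply Lemma \ref{le:exist}: there all one needs is an element of $\oo_{\bF}$ congruent to a prescribed $\beta$ modulo an ideal $I$, provided $\beta$ is relatively prime to $I$, and the lemma then produces a unit $\delta$ in a finite extension $L/\bF$ with $\delta \equiv \beta \pmod{I\oo_L}$, where moreover $L$ is generated by a root of a polynomial $X^d + aX^{d-1} + b$ with $a, b \in \oo_{\bF}$.

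The key observation is that the defining hypothesis on $\bF$ says exactly that every such polynomial already has a root in $\bF$, so the finite extension $L$ supplied by Lemma \ref{le:exist} can be taken to be $\bF$ itself. Concretely, I would set $\beta := 1 + (\e-1)x$ and $I := (\e-1)^2\oo_{\bF}$; since $\beta \equiv 1 \pmod{(\e-1)}$, $\beta$ is a unit modulo any power of $(\e-1)$, hence relatively prime to $I$. (If $\e = 1$ then $I = 0$ and one may just take $\delta = 1$; this degenerate case should be dispatched separately at the outset, or absorbed by noting $S_{1,1} = \oo_K$.) Applying Lemma \ref{le:exist} with this $I$ and $\beta$, and then replacing the ambient extension $L$ by $\bF$ using the hypothesis, yields a unit $\delta \in U_{\bF}$ with $\delta \equiv \beta = 1 + (\e-1)x \pmod{(\e-1)^2\oo_{\bF}}$, which is precisely the congruence required for $x \in R_{\bF}$.

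There is one subtlety to check in carrying this out. The proof of Lemma \ref{le:exist} first replaces $I$ by a principal multiple and chooses a generator $\mu$; it then builds $\delta = \mu\rho + \beta$ where $\rho$ is a root of $X^d + aX^{d-1} + b$. To invoke the hypothesis on $\bF$ I need the polynomial whose root I adjoin to have coefficients in $\oo_{\bF}$ — which is guaranteed by the "Further" clause of Lemma \ref{le:exist}, with $a, b \in \oo_K = \oo_{\bF}$ — so its root $\rho$ lies in $\bF$, hence $\delta \in \bF$, and being integral over $\oo_{\bF}$ and a unit in $U_L = U_{\bF(\rho)}$ it is a unit in $U_{\bF}$. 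The main (indeed only) point requiring care is thus the bookkeeping around making $I$ principal: passing from $I = (\e-1)^2\oo_{\bF}$ to a principal multiple changes the modulus, but any $\delta$ congruent to $\beta$ modulo the larger ideal $I$ is a fortiori congruent modulo $(\e-1)^2\oo_{\bF}$, so the weaker congruence we actually get still suffices. I expect no real obstacle here; the entire content is recognizing that the hypothesis on $\bF$ is tailored to kill the finite extension appearing in Lemma \ref{le:exist}.
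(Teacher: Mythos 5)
Your overall strategy is the same as the paper's: reduce to the inclusion $\oo_{\bF} \subset R_{\bF}$, set $\beta := 1 + x(\e-1)$ and $I := (\e-1)^2$, invoke Lemma \ref{le:exist} to produce a unit $\delta$ congruent to $\beta$ modulo $I$, and use the hypothesis on $\bF$ to force that unit to lie in $\bF$. But there is one genuine gap in how you invoke the lemma.

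Lemma \ref{le:exist} is stated only for a \emph{number field} $K$, and its proof relies on this: it needs the order $d$ of $\beta$ in $(\oo_K/I)^\times / \{\text{image of } U_K\}$ to be a well-defined positive integer, which is guaranteed by finiteness of $\oo_K/I$ for $K$ of finite degree over $\Q$. You instead set $I := (\e-1)^2\oo_{\bF}$ and read off ``$a,b \in \oo_K = \oo_{\bF}$'', i.e.\ you are applying the lemma with $K = \bF$. When $\bF$ is an infinite extension (the only interesting case here, as the hypothesis already forces $\bF$ to contain roots of arbitrarily high degree), the quotient $\oo_{\bF}/(\e-1)^2\oo_{\bF}$ is in general infinite, and the order $d$ used in the lemma's proof need not exist. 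The paper dodges this cleanly by applying Lemma \ref{le:exist} with $K := \Q(x,\e)$, a number field, and $I := (\e-1)^2 \oo_{\Q(x,\e)}$. This produces $a,b \in \oo_{\Q(x,\e)} \subset \oo_{\bF}$ and a root $\rho$ of $X^d + aX^{d-1} + b$; the hypothesis on $\bF$ lets you choose $\rho \in \bF$ (the construction works with any root), so $\delta = \mu\rho + \beta \in U_{K(\rho)} \subset U_{\bF}$ and $\delta - 1 \equiv x(\e-1) \pmod{(\e-1)^2 \oo_{\bF}}$.

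A smaller remark: your ``one subtlety'' about replacing $I$ by a principal multiple is a non-issue here --- $(\e-1)^2$ already generates a principal ideal, so no replacement occurs and the whole paragraph can be dropped. The real care needed is exactly the point above: pick a number field before invoking Lemma \ref{le:exist}.
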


\begin{proof}
Suppose $x \in \oo_{F}$ and $\e \in U_{F}, \e \ne 1$.  
Applying Lemma \ref{le:exist} with 
$$
K := \Q(x,\e), \quad \beta := 1 + x(\e-1), \quad I := (\e-1)^2
$$
we get a unit $\delta \in U_{F}$ satisfying
$$
\delta-1 \equiv x(\e-1) \pmod{(\e-1)^2}
$$
(note that $\delta \in U_{F}$ thanks to our assumption on ${F}$).
It follows that $x \in R_{F}$.
\end{proof}

\begin{remark}
The field $\Qbar$ satisfies the hypothesis of Proposition \ref{onlyQbar}; 
we don't know of any other field that does. 

Among big fields  of algebraic numbers $K$, which of them have the property 
that the field of fractions of $R_K$ is $K$? 

For a big field $K$ 
denote by $K_{\{1\}}$ the field of fractions of $R_K$. We have $K_{\{1\}}\subset K$ 
by Proposition \ref{prop:isaring}(2), and 
we can iterate this operation, denoting by $K_{\{i+1\}}$ the field of 
fractions of the ring $R_{K_{\{i\}}}$.  This tower 
$$
{\Q}\subset\cdots K_{\{i\}}\subset  K_{\{i-1\}}\dots \subset  K_{\{1\}} \subset  K
$$
gives us a first order definition (involving $i+1$ universal quantifiers) 
of the ring of integers of $ K_{\{i\}}$ in $\oo_K$. 

Are there examples of 
big fields $K$ where this descending sequence of fields doesn't stabilize? 
That is, such that $ K_{\{i+1\}}$ is a proper subfield of $ K_{\{i\}}$ for all $i$?
\end{remark}

\section{Proof of the main theorem}
\label{mainpf}

Before proving  our main result, Theorem \ref{thm:main}, we need a lemma to
handle the case where our formula possibly defines an order 
in an imaginary quadratic field.  In what follows we suppress---i.e., we  understand implicitly---equations and/or variables 
needed to say that an element of a ring is a unit or that a congruence holds over a ring.

Let 
$$
D(w) := 3^{12} w^4 (w-1)^4(w-2)^4 \in \Z[w]
$$ 
be the quantity denoted $D(2,w)$ in \cite[\S11]{MRSh22}.

\begin{lemma}[Cf. \S 3 of \cite{Den1}]
\label{prel}
Fix  a non-square positive integer $d \equiv 3 \bmod 4$.
For $F$ an imaginary quadratic field let 
$\calS_F = \calS_{F,d} (w, s_1, s_2, t_1, t_2, u_1, u_2, v_1, v_2)$ denote the system of equations 
and congruences \eqref{eq:delta1} through \eqref{eq:w}:
  
\begin{gather}
\label{eq:delta1}
\delta_i := s_i+t_i\sqrt{d} \in U_{F(\sqrt{d})}, \quad s_i, t_i \in \oo_F, \quad i=1,2,\\ 
\label{eq:square} 
\varepsilon_i:=\delta_i^2=u_i+v_i\sqrt{d}, \quad u_i, v_i \in \oo_F, \quad i=1,2,\\
\label{eq:mod32}
\varepsilon_1 \equiv 1 \pmod{d D(w)\oo_F}, \\
\label{eq:w}
\varepsilon_2-1 \equiv w(\varepsilon_1-1) \pmod{(\varepsilon_1-1)^2}.
\end{gather}

Then  ${\calS}_{F,d}$ constitutes a uniform existential definition  of $\Z$ over the ring of integers $ \oo_F$ of any 
imaginary quadratic field $F$.
\end{lemma}

\begin{proof}
Suppose $(w, s_1, s_2, t_1, t_2, u_1, u_2, v_1, v_2) \in \oo_F^9$ is a zero of $\calS$.
Then $\e_1$ and $\e_2$ are squares of units by \eqref{eq:delta1}, \eqref{eq:square}, 
and congruent to $1 \pmod{d}$ by \eqref{eq:mod32}, \eqref{eq:w}, so by the same argument as in the proof of
Proposition \ref{cor:real1} we conclude that $\e_1, \e_2 \in \Z[\sqrt{d}]=\oo_{\Q(\sqrt{d})}$. 
Now by Corollary 11.12 of \cite{MRSh22} applied to the extension $F(\sqrt{d})/\Q(\sqrt{d})$ and the ideal $I=(\e-1)$ we have that $w \in \oo_F \cap \Z[\sqrt{d}]=\Z$.

Conversely, suppose $w \in \Z$.  Let $s_1,t_1 \in \Z$ be such that 
$\delta_1 \in U_{\Q(\sqrt{d})} \subset U_{F(\sqrt{d})}$ and 
\[
s_1\equiv 1, \quad t_1 \equiv 0 \pmod{dD(w)}.
\]
Let $u_1, v_1 \in \Z$ be such that $\varepsilon_1=\delta_1^2$.  
Then \eqref{eq:delta1}- \eqref{eq:mod32} hold for  $i=1$.  
 
 Let $s_2, t_2\in \Z$ be such that $\delta_2=\delta_1^w$.  Let $u_2, v_2 \in \Z$ be such that $\varepsilon_2=\delta_2^2$.  Then \eqref{eq:delta1}- \eqref{eq:mod32} hold for  $i=2$.
\end{proof}
\begin{remark}
If we replace  $\oo_F$ by $\Z$  then for any integer $w$ we can still satisfy all equations $\calS_F$ in the proof 
of Lemma \ref{prel}.
\end{remark}
\begin{remark}
Denef's proof in \cite{Den1}, while not formally asserting {\it uniformity} across imaginary quadratic fields, can be easily converted into a uniform proof.  

\end{remark}
\begin{theorem}
\label{thm:main}
Let $\calA$ be the collection of all non-big fields of algebraic numbers.
There exists a first-order formula of the form 
``$\forall \forall\exists \ldots \exists$'' uniformly defining $\Z$ over the 
rings of integers of all fields in $\calA$.  
Thus, the first-order theory of these rings is undecidable.
\end{theorem}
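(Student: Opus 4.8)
The plan is to take as the defining formula $\Phi(x)$ the conjunction of two pieces. The first piece is the formula $\rho(x)$ that cuts out $R_L$, namely ``$\forall\e\;\exists\delta\;\exists w$'' with $\e,\delta$ units of $\oo_L$ and $\delta-1=(\e-1)x+(\e-1)^2w$; once ``$\e$ is a unit'' is unwound to ``$\exists\e'\colon\e\e'=1$'' and the implication is brought to prenex form, the single universal of $\rho$ becomes a block $\forall\e\,\forall\e'$, while all its other quantifiers stay existential and the matrix collapses to one polynomial equation. The second piece $\sigma(x)$ is a purely existential formula built from the system $\calS_d$ of Lemma \ref{prel} (with $x$ playing the role of the free variable $w$ there); conjoining $\sigma$ with $\rho$ and pulling its witnesses to the front leaves the whole formula in the shape ``$\forall\forall\exists\cdots\exists$''. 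Thus the content of the theorem is that, with $\sigma$ chosen correctly, $\rho\wedge\sigma$ defines exactly $\Z$ over $\oo_L$ for every $L\in\calA$.

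For the inclusion $\Z\subseteq\Phi(\oo_L)$ I would note that $\Z\subseteq R_L$ always, since $R_L$ is a subring of $\oo_L$ by Proposition \ref{prop:isaring}(2) (with $M=U_L$), so $\rho(x)$ holds for every $x\in\Z$; and $\sigma(x)$ holds because the auxiliary variables of $\calS_d$ can be chosen inside $\Z\subseteq\oo_L$, exactly as in the converse direction of the proof of Lemma \ref{prel} and the Remark following it. For the reverse inclusion, suppose $\Phi(x)$ holds; then $x\in R_L$, so by Theorem \ref{cor:oddp} either $R_L=\Z$, in which case $x\in\Z$ and we are done, or $R_L$ is an order in an imaginary quadratic field $F\subseteq L$, and in particular $x\in\oo_F$. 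In the latter case one runs the forward direction of Lemma \ref{prel}: the $\calS_d$-data carried by $\sigma(x)$, read inside $\oo_F\supseteq R_L$, forces, via Corollary~11.12 of \cite{MRSh22} applied to $F(\sqrt d)/\Q(\sqrt d)$ and the ideal generated by $\e_1-1$, that $x\in\Z[\sqrt d]$, whence $x\in\oo_F\cap\Z[\sqrt d]=\Z$.

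The step I expect to be the main obstacle is the interaction between the case analysis and the prescribed quantifier shape. The argument of Proposition \ref{cor:real1} that drives $\e_1,\e_2$ down into $\Z[\sqrt d]$ uses that the ambient field is CM; but when $R_L$ is an order in an imaginary quadratic field the field $L$ need not be CM (by Corollary \ref{cor:notbig} this is precisely the remaining case), and then $L(\sqrt d)$ need not be CM either, so the auxiliary variables of $\calS_d$ cannot be allowed to range over all of $\oo_L$ — they must be constrained to lie in $\oo_F=\widetilde R_L$, and the obvious device of conjoining ``$v\in\widetilde R_L$'' reintroduces a universal quantifier inside the existential block and destroys the ``$\forall\forall\exists\cdots\exists$'' form. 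I would circumvent this not by constraining the variables at all, but by replacing, in the analysis of a hypothetical solution, each unit occurring in $\calS_d$ by its norm down to $\Q(\sqrt d)$ over the number field it generates: by Lemma \ref{norm} and Lemma \ref{sumform} the congruences of $\calS_d$ survive this descent (at the cost of harmless integer degree factors which can be cleared at the end), so one lands in congruences over $\oo_{\Q(\sqrt d)}$ among elements of $\oo_{\Q(\sqrt d)}\subseteq\Z[\sqrt d]$ to which Corollary~11.12 of \cite{MRSh22} applies, yielding $x\in\Z[\sqrt d]$ with no appeal to any CM property of $\oo_L$. One then re-checks, using Lemma \ref{prel}, that over $\oo_F$ with $F$ imaginary quadratic the modified condition still defines precisely $\Z$, and that over a non-big $\oo_L$ with $R_L=\Z$ it holds for every $x\in\Z$. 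Finally, since the first-order theory of $\Z$ is undecidable (Hilbert--Bernays--Rosser, as recalled in the Introduction) and $\Phi$ defines $\Z$ uniformly over all these rings, the reduction recalled in the Introduction gives that the first-order theory of $\oo_L$ is undecidable, uniformly in $L\in\calA$.
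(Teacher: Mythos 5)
Your setup is right---the formula is the conjunction of the $R_L$-membership condition $\rho$ (whose two universals come from $\forall\e$ plus the extra $\forall$ needed to say $\e$ is a unit) with a system built from the $\calS_d$ of Lemma \ref{prel}---and you have correctly located the crux: when $R_L$ is an order in an imaginary quadratic field $F$, the witnesses $s_i,t_i,u_i,v_i$ of $\calS_d$ a priori range over all of $\oo_L$, and the argument of Lemma \ref{prel} (which needs $F(\sqrt d)$ to be CM) says nothing about them. But your proposed repair by norm descent does not work. Lemma \ref{norm} lets you replace $\delta$ by $N_{L/K}(\delta)$ only in a congruence $(\e-1)x\equiv\delta-1\pmod{(\e-1)^2\oo_L}$ in which the unit $\e$ defining the modulus \emph{and} the multiplier $x$ already lie in the base field $K$ of the norm. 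In the congruence \eqref{eq:w} the modulus is $(\e_1-1)^2$ with $\e_1\in U_{L(\sqrt d)}$ not known to lie in $\Q(\sqrt d)$, and the multiplier is $w\in\oo_F$ with $F$ imaginary quadratic, so $w\notin\Q(\sqrt d)$ unless $w\in\Z$---which is exactly what you are trying to prove. Likewise the condition $\e_1\equiv 1\pmod{dD(w)}$ cannot be pushed down to $\Q(\sqrt d)$, since $D(w)\in\oo_F$ need not lie in $\Z[\sqrt d]$. So the congruences of $\calS_d$ do not ``survive the descent,'' and Corollary 11.12 of \cite{MRSh22} cannot be invoked; the circularity is essential, not a matter of clearing degree factors.

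The fix you dismissed is essentially the one the paper uses, and it does not damage the quantifier shape. One conjoins, for each auxiliary variable $s_i,t_i,u_i,v_i$, the condition that it lies in $R_L$; each such condition has the form $\forall\e\,\exists(\text{unit})(\text{congruence})$, and all of them, together with the one for $w$, share the single outer universal quantifier, so the whole formula is still of the form $\forall\forall\exists\cdots\exists$ (this is Definition \ref{formula}). Theorem \ref{cor:oddp} then places $w$ and all the auxiliary variables simultaneously in $\Z$ or in $\oo_F$ for a single imaginary quadratic $F$, after which the forward direction of Lemma \ref{prel} applies verbatim; the converse direction is as you describe. Without this (or some substitute), your $\rho\wedge\sigma$ is not known to exclude non-integers of $R_L$ when $R_L$ is an imaginary quadratic order, so the proof as written has a gap.
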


Before beginning the proof, let  $K$ be any field of algebraic numbers 
and consider the following  first-order formula of the form 
``$\forall \forall\exists \ldots \exists$''  and the set $Z_K \subset \oo_K$ it defines: 

\begin{definition}
\label{formula} 
Define $Z_K \subset \oo_{K}$ to be the 
set of all $w \in \oo_K$ satisfying  
\[
 \forall \varepsilon \in U_{K}\setminus \{1\},
 \exists \mu, \mu_1, \mu_2, \nu_1,\nu_2, \gamma_1, \gamma_2,\tau_1, \tau_2 \in U_{K}, s_1, s_2, t_1, t_2, u_1, u_2, v_1, v_2 \in \oo_{K} 
 \]
such that
\begin{gather}
\label{eq:z1} w(\varepsilon-1) \equiv \mu-1 \pmod{(\varepsilon-1)^2},\\
u_i(\varepsilon-1) \equiv \mu_i-1  \pmod{(\varepsilon-1)^2}, \quad i=1,2, \\
v_i(\varepsilon-1) \equiv \nu_i-1 \pmod{(\varepsilon-1)^2}, \quad i=1,2, \\
s_i(\varepsilon-1) \equiv  \sigma_i-1 \pmod{(\varepsilon-1)^2}, \quad i=1,2, \\
\label{eq:Z} t_i(\varepsilon-1) \equiv \tau_i-1 \pmod{(\varepsilon-1)^2}, \quad i=1,2, \\
\label{eq:S} \calS_F(w, s_1, s_2, t_1, t_2, u_1, u_2, v_1, v_2),
\end{gather}
with $\calS$ as in the proof of Lemma \ref{prel}.
\end{definition}

\begin{remark}
Unfortunately, we cannot quantify over the group of units.  Therefore, instead of using ``$\forall \varepsilon \in U_{K}\setminus \{1\}$'' in Definition \ref{formula}  we will quantify over all $\varepsilon, \beta \in \oo_K$ by requiring nothing if  $\varepsilon \beta$ is not equal to 1 or if $\varepsilon =1$ and requiring \eqref{eq:z1}--\eqref{eq:S} if  $\varepsilon \beta=1$.  More formally, we have the following definition of $\Z$:
\[
\forall \varepsilon \forall \beta \in \oo_K: 
(\varepsilon\beta=1 \land [(\e = 1) \lor ((\e \ne 1) \land \ldots))]) \lor (\varepsilon \beta \ne 1),
\]
where we insert equations \eqref{eq:z1}--\eqref{eq:S} in place of ``$\ldots$''.  
Essentially we need a second universal quantifier if $\varepsilon$ 
is not a unit.  Having a uniform existential definition of non-units would 
eliminate the need  for the second universal quantifier, but we do not know 
how to construct such a definition or even if it exists.
 \end{remark}
 
\begin{proof}[Proof of Theorem \ref{thm:main}]
Let $K$ be non-big, and $Z_K$ as in Definition \ref{formula}.  We will show that $Z_K = \Z$.

Suppose $w \in Z_K$.  Then by Theorem \ref{cor:oddp} we have that either 
$$
\text{$w, u_1, u_2, v_1, v_2,  s_1, s_2, t_1, t_2 \in \Z$ 
\quad or \quad
$w, u_1, u_2, v_1, v_2,  s_1, s_2, t_1, t_2 \in \oo_F$}
$$
for some imaginary quadratic field $F$.  In either case equations \eqref{eq:S} imply $w \in \Z$.

Conversely, if $w \in \Z$, then the proof of Lemma \ref{prel} shows that 
the equations of \eqref{eq:S} can be satisfied with all variables ranging over $\Z$.  
Therefore equations \eqref{eq:z1}--\eqref{eq:S} can be satisfied over $\oo_{K}$, so $w \in Z_K$.
\end{proof}

\begin{remark}
Let $\calA$ denote the set of all algebraic extensions $K$ of $\Q$ such that the 
set $Z_K$ of Definition \ref{formula} is equal to $\Z$.  Theorem \ref{thm:main} 
implies that $\calA$ contains all non-big fields, including in particular all number fields.  
This leads us to the following questions.

\begin{question}
Does $\calA$ contain any big fields?
\end{question} 

\begin{question}
Does $\calA$ contain all abelian fields?
\end{question} 
\end{remark}
\begin{remark}
The construction we used above can be adapted in the case of number fields to produce a definition of $\Z$ of the form $\forall \exists \ldots \exists$.  Unfortunately this definition, as in the case of the definition in this form constructed by J. Robinson,  is uniform only across classes of number fields of bounded degree.

\end{remark}

\section{Non-existence of uniform existential or purely universal definitions of $\Z$}

Recall the definition of ``uniform'' (Definition \ref{def:uni}) and consider the following definition.
\begin{definition}
Let $R$ be a ring.  Let $A \subset R$ and assume there exists a polynomial $p(t,x_1,\ldots, x_k) \in R[t,x_1, \ldots, x_k]$ such that 
\[
A=\{t \in R|\exists  x_1,\ldots,x_k \in R^k: p(t,x_1,\ldots,x_k)=0\}.
\] 
Then $p(t,\bar x)$ is called a diophantine definition of $A$ over $R$.
\end{definition}

\begin{proposition}
\label{thm:uni} Let $K$ be a number field, $L$ an algebraic extension of $K$, 
and $A$ a subset of $\oo_L$.  
If there is a diophantine definition of $A\cap\oo_L'$ over $\oo_{L'}$ uniform across the 
rings of algebraic integers $\oo_{L'}$ for all finite extensions $L'$ of $K$ in $L$, 
then there exists a diophantine definition of $A$ over $\oo_L$.
\end{proposition}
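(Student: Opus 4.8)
The plan is to reduce the general statement to a uniformly bounded situation by encoding the degree of an intermediate field. Suppose the given diophantine definition of $A \cap \oo_L$ is
\[
a \in A \cap \oo_L \iff \exists x_1,\ldots,x_r \in \oo_L \quad \bigwedge_i Q_i(a,x_1,\ldots,x_r) = 0,
\]
with the $Q_i \in \oo_K[t,\bar x]$ fixed independently of $L$. I want a single existential formula over $\oo_F$ that captures ``$a$ lies in $A \cap \oo_L$ for the particular finite $L = K(\text{finitely many witnesses}) \subset F$ that the existential quantifiers produce.'' The obstacle is that the quantifiers in a diophantine formula over $\oo_F$ range over all of $\oo_F$, not over any prescribed finite subextension, so one cannot directly say ``work inside $\oo_L$.''

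The standard device is to use a known existential definition of the subring $\oo_L$ of $\oo_F$ in terms of a primitive element and its degree --- more precisely, to note that for $\alpha \in \oo_F$ with $K(\alpha)$ of degree $d$ over $K$, the ring $\oo_{K(\alpha)}$ is existentially definable over $\oo_F$ with $\alpha$ and (bounds depending on) $d$ as parameters, via writing elements as $\oo_K$-combinations of $1,\alpha,\ldots,\alpha^{d-1}$ divided by a bounded denominator, together with the integrality (characteristic-polynomial) equations. So first I would fix, for each $d$, such an existential description $\beta \in \oo_{K(\alpha)} \iff \exists \bar y \; \Phi_d(\beta, \alpha, \bar y)$ valid over every $\oo_F \supset K(\alpha)$. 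Then the composite formula over $\oo_F$ asserting ``$\exists \alpha \in \oo_F$, $\exists$ a presentation of $a, x_1,\ldots,x_r$ as elements of $\oo_{K(\alpha)}$ via $\Phi_d$, with $\bigwedge_i Q_i(a,\bar x) = 0$'' defines exactly $A \cap \oo_{K(\alpha)}$ for that $\alpha$; and ranging $\alpha$ over all of $\oo_F$, the union of the $K(\alpha)$ over finite subextensions exhausts every element of $A \cap \oo_F$, since any single element of $\oo_F$ already lies in some finite $K(\alpha) \subseteq F$.

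The one genuine subtlety is that $d = [K(\alpha):K]$ is unbounded as $\alpha$ varies, so I cannot take a single $\Phi_d$; I need the degree to be existentially quantifiable too. The fix is to observe that the integrality of $\alpha$ over $\oo_K$ --- ``$\alpha^n + c_{n-1}\alpha^{n-1} + \cdots + c_0 = 0$ for some $n$ and some $c_j \in \oo_K$'' --- is itself an existential statement over $\oo_F$ (quantifying over $n$ is not first-order, but quantifying over the coefficient tuple of a monic polynomial killing $\alpha$ for a *fixed* size is, and one can instead run the argument with the explicit $\oo_K$-module $\oo_K[\alpha] \subseteq \oo_{K(\alpha)}$ of some bounded index $m$, absorbing the index into extra quantified variables exactly as in the proof of Proposition 2.9 / the construction of $\tilde R_{K,M}$). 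Concretely: it suffices to produce, for each finite $L$, *some* $\alpha \in \oo_L$ and *some* $m \in \Z_{>0}$ with $m\,\oo_L \subseteq \oo_K[\alpha] \subseteq \oo_L$, and to express ``$\exists \alpha, \exists$ monic relation for $\alpha$ over $\oo_K$ of the quantified degree, $\exists m$, and a presentation $m\cdot(\text{everything}) \in \oo_K[\alpha]$'' --- which, after clearing the auxiliary $m$ with the Denef nonzero-divisor trick already cited in the paper, is genuinely diophantine over $\oo_F$. Putting these pieces together yields the desired uniform-to-absolute transfer; I expect the degree-unboundedness bookkeeping to be the only part requiring care, and it is handled exactly by the denominator-absorption argument used earlier in the excerpt.
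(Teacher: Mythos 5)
Your proposal starts from a misdiagnosis of where the difficulty lies, and this sends you down a road that is both unnecessary and, as you yourself sense, not actually traversable. You write that ``the obstacle is that the quantifiers in a diophantine formula over $\oo_F$ range over all of $\oo_F$, not over any prescribed finite subextension, so one cannot directly say `work inside $\oo_L$'.'' But for a \emph{purely existential} formula this is not an obstacle at all --- it is precisely what makes the statement immediate. If $p(t,\bar x)\in\oo_K[t,\bar x]$ is the uniform diophantine definition, then the very same formula works over $\oo_F$: given $t\in\oo_F$ and a witness tuple $\bar y\in\oo_F^m$ with $p(t,\bar y)=0$, the field $L:=K(t,\bar y)$ is a \emph{finite} subextension of $F$ over $K$ (finitely many algebraic generators), $t$ and $\bar y$ all lie in $\oo_L$, and the uniformity hypothesis applies to this $L$; so $t\in A\cap\oo_L\subseteq A$. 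Conversely, if $t\in A$, then $t\in\oo_{K(t)}$ with $K(t)/K$ finite, and the definition over $\oo_{K(t)}$ produces a witness tuple in $\oo_{K(t)}\subseteq\oo_F$. There is no need to encode the intermediate field, to relativize to $\oo_{K(\alpha)}$, or to touch the formula at all: any solution in $\oo_F$ automatically lands you in a finite subextension where the hypothesis already says what you need. This is the paper's (two-line) proof.

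Beyond being unnecessary, your detour has a genuine unfilled gap. You need, for every $\alpha\in\oo_F$, an existential description of $\oo_{K(\alpha)}$ inside $\oo_F$, and you correctly note that the natural description writes elements as $\oo_K$-linear combinations of $1,\alpha,\dots,\alpha^{d-1}$ with a bounded denominator. But $d=[K(\alpha):K]$ is unbounded as $\alpha$ ranges over $\oo_F$, and a single first-order formula has a fixed, finite number of quantified variables, so there is no way to quantify over a coefficient tuple ``of the right size.'' The fix you gesture at (absorbing a denominator $m$ with the Denef nonzero trick, as in the $\tilde R_{K,M}$ argument) handles the index $[\oo_L:\oo_K[\alpha]]$, not the unbounded degree; it does not supply the missing uniformity in $d$. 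So even as a standalone argument the proposal does not close. The lesson here is to notice that uniformity of an \emph{existential} definition across all finite subextensions is already, without further work, an existential definition over the union, because existential witnesses are finitary.
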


\begin{proof}
Let $p(t,\bar x) \in \oo_{K}[t,\bar x]$ be a diophantine definition of $\oo_{L'} \cap A$ 
for all finite extensions $L'$ of $K$ in $L$.  
Let $t \in A \subset \oo_L$.  Let $L'=K(t)$.  Then there exists an $m$-tuple 
$\bar x \in \oo_{L'}^m\subset \oo_L^m$ such that $p(t,\bar x)=0$. 

Now suppose there exists $t \in \oo_L \setminus A$ such that for some 
${\bar y} \in \oo_L$ we have that $p(t,{\bar y})=0$.  Let $L'=K(t,\bar y)$.  
Then  $p(t, \bar x)$ is not a diophantine definition of $A \cap \oo_{L'}$ 
over $\oo_{L'}$, contradicting our assumption on $p(t,\bar x)$.  
Thus, for any $t \in \oo_L$ we have that there exists ${\bar x} \in \oo_L$ 
such that $p(t,\bar x)=0$ if and only if $t \in A$.  
\end{proof}

\begin{corollary}
Let $K$ be a number field and $\calL$ the collection of all finite 
extensions of $K$.  Then there is no uniform purely existential 
or purely universal definition of $\oo_K$ across rings of integers of elements of $\calL$.
\end{corollary}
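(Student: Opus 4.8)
The plan is to derive a contradiction from the assumption that $\oo_K$ has a uniform purely existential (resp. purely universal) definition across all finite extensions $L/K$. First I would invoke Proposition \ref{thm:uni} with $F = \bar K$ (or more conservatively with $F$ equal to a suitable infinite algebraic extension of $K$): a uniform existential definition of $\oo_K \cap \oo_L = \oo_K$ across all finite $L/K$ in $F$ upgrades to an existential definition of $\oo_K$ over $\oo_F$. So it suffices to exhibit a single infinite algebraic extension $F/K$ over which $\oo_K$ admits no existential (resp. universal) definition, and then the existential and universal cases can be played against each other by complementation.

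Here is the mechanism I expect to use. Suppose $\oo_K$ were existentially definable over $\oo_F$ for some infinite algebraic $F/K$; by Proposition \ref{onlyQbar}, the natural candidate is $F = \bar K = \Qbar$, over which $\oo_F = \oo_{\Qbar}$ and, crucially, \emph{every} system of polynomial equations that has a solution over $\oo_{\Qbar}$ has one with coordinates already in a fixed number field (indeed in $\oo_{\Qbar}$, where being a unit is automatically arranged by Lemma \ref{le:exist}/Proposition \ref{onlyQbar}). Concretely: if $\exists \bar x\, p(t,\bar x)=0$ defines $\oo_K$ over $\oo_{\Qbar}$, pick any $\sigma \in \Gal{\Qbar}{K}$ and any $t \in \oo_K$; a witness $\bar x$ over $\oo_{\Qbar}$ gives, by applying $\sigma$, another witness for $\sigma(t)$, but $\sigma$ fixes $\oo_K$, so this gives no contradiction directly — the contradiction comes instead from choosing $t \notin \oo_K$: since $\Qbar/K$ has elements arbitrarily close $\pp$-adically to elements of $\oo_K$ for infinitely many primes $\pp$, and the relation $p(t,\bar x)=0$ is "too rigid" to separate them. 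The clean way to package this is: an existential formula over $\oo_{\Qbar}$ cuts out a set stable under $\Gal{\Qbar}{K}$ that is either all of $\oo_{\Qbar}$ or misses a "large" ($\Gal{\Qbar}{K}$-dense) set, and $\oo_K$ is neither; symmetrically for universal formulas via the complement.

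The main obstacle will be making the "rigidity"/density argument of the previous paragraph airtight — i.e. proving that no proper, nonempty, Galois-stable, existentially-definable subset of $\oo_{\Qbar}$ can equal $\oo_K$. I would handle this by combining two facts: (i) Proposition \ref{onlyQbar} forces $R_{\Qbar} = \oo_{\Qbar}$, so the "ring-defining" machinery collapses and cannot single out $\oo_K$; and (ii) an explicit approximation: for $t \in \oo_{\Qbar}\setminus \oo_K$ one can find $\tau \in \oo_K$ with $t \equiv \tau$ modulo a high power of a prime, and then any existential witness for $\tau$ can be perturbed (using that over $\oo_{\Qbar}$ one can always solve the relevant norm/unit equations, Lemma \ref{le:exist}) to a witness for $t$, forcing $t \in \oo_K$, a contradiction. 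Once the existential case is done, the universal case follows immediately: a universal definition of $\oo_K$ would give an existential definition of $\oo_{\Qbar}\setminus\oo_K$ over $\oo_{\Qbar}$ (using that the nonzero elements, hence the relevant complements, are existentially definable by Denef's result cited after the second Proposition), and the same rigidity argument excludes that. Therefore no uniform purely existential or purely universal definition of $\oo_K$ across $\calL$ exists.
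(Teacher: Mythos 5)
Your first move — invoking Proposition \ref{thm:uni} with $F = \bar K$ to upgrade a uniform existential definition across all finite $L/K$ to an existential definition over $\oo_{\bar K}$, and similarly handling the universal case via the complement — is exactly the paper's first move. The genuine gap is in the second half: you need to show that $\oo_K$ admits no existential (hence no universal) definition over $\oo_{\bar K}$, and your proposed ``rigidity''/Galois-density mechanism does not establish this. You assert, without proof, that an existentially definable $\Gal{\Qbar}{K}$-stable subset of $\oo_{\Qbar}$ must either be all of $\oo_{\Qbar}$ or miss a Galois-dense set, and that an existential witness for $\tau \in \oo_K$ can be ``perturbed'' to a witness for a nearby $t \notin \oo_K$ using Lemma \ref{le:exist}; neither claim is made precise, you flag this yourself as the main obstacle, and I do not see how to make the perturbation argument work (existential formulas are not continuous in the relevant sense, and a witness for $\tau$ modulo a high prime power gives no control over the unsolvability of the full integral system for $t$). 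Your appeal to Proposition \ref{onlyQbar} only shows that the specific formula $R_K$ collapses to $\oo_{\Qbar}$, not that no other existential formula could cut out $\oo_K$.

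The paper closes this gap cleanly and without any of this machinery: in either the existential or the universal case, one obtains a first-order definition of $\oo_K$ over $\oo_{\bar K} = \oo_{\bar \Q}$. But the first-order theory of $\oo_{\bar \Q}$ is decidable (van den Dries \cite{Dries4}), while the first-order theory of $\oo_K$ for any number field $K$ is undecidable (J.~Robinson \cite{Rob1}); if $\oo_K$ were first-order definable over $\oo_{\bar \Q}$, the latter would inherit undecidability, a contradiction. You should replace your rigidity/perturbation paragraph with this two-citation decidability argument — it is both shorter and actually complete.
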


\begin{proof}
We apply Proposition \ref{thm:uni} to $A:=A_1 =\oo_K$ and 
$A:=A_2=\oo_{\bar K}\setminus \oo_K$, with $F = \bar{K}$, the algebraic closure of $K$.  
If $A_1$ has a uniform diophantine definition across fields in $\calL$, then $\oo_K$ has a diophantine definition over $\oo_{\bar K}$.  If $A_2$ has a uniform diophantine definition across rings of integers of fields in $\calL$, then $\oo_{\bar K} \setminus \oo_K$ has a diophantine definition over $\oo_{\bar K}$ or, equivalently, $\oo_K$ has a purely universal definition over $\oo_{\bar K}$.  In either case $\oo_K$ has a first-order definition over $\oo_{\bar K}$.  But by the result of J. Robinson (\cite{Rob1}) we know that the first-order theory of any number field is undecidable and by a result of van den Dries (\cite{Dries4}) we know that the first-order theory of $\oo_{\bar \Q}$ is decidable.  Hence neither $A_1$, nor $A_2$ can have a uniform existential definition across elements of $\calL$.  Therefore, there is no uniform purely universal definition of $\oo_K$ across rings of integers of  elements of $\calL$.
\end{proof}

Applying the discussion above to $K=\Q$. we get:
\begin{corollary}
\label{thm:nonu}
There is no existential or purely universal definition of $\Z$ uniform across all rings of integers of number fields.
\end{corollary}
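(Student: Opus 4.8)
The plan is to obtain this statement as the special case $K=\Q$ of the preceding corollary, so that essentially nothing new is required; for completeness I would also spell out the short direct argument through Proposition \ref{thm:uni}.

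First I would note that the finite extensions of $\Q$ are precisely the number fields and that $\oo_{\Q}=\Z$, so the collection $\calL$ of all finite extensions of $\Q$ is exactly the collection of all number fields, and $\Z\cap\oo_L=\Z$ for every $L\in\calL$. Then I would apply Proposition \ref{thm:uni} with $K=\Q$ and $F=\Qbar$. If $\Z$ admitted a uniform existential definition across $\{\oo_L : L\in\calL\}$, then, since $\Z\cap\oo_L=\Z$ for every number field $L$, Proposition \ref{thm:uni} applied to $A:=\Z$ would produce an existential (diophantine) definition of $\Z$ over $\oo_{\Qbar}$. If instead $\Z$ admitted a uniform purely universal definition across the same family, I would pass to complements: a purely universal definition of $\Z$ over $\oo_L$ is the same as an existential definition of $\oo_L\setminus\Z=(\oo_{\Qbar}\setminus\Z)\cap\oo_L$, so $\oo_{\Qbar}\setminus\Z$ would admit a uniform existential definition across $\calL$, and Proposition \ref{thm:uni} applied to $A:=\oo_{\Qbar}\setminus\Z$ would give an existential definition of $\oo_{\Qbar}\setminus\Z$ over $\oo_{\Qbar}$, i.e. a purely universal definition of $\Z$ over $\oo_{\Qbar}$. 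Either way, $\Z$ is first-order definable over $\oo_{\Qbar}$.

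Finally I would extract the contradiction from decidability exactly as in the preceding proof: by van den Dries \cite{Dries4} the first-order theory of $\oo_{\Qbar}$ is decidable, whereas the first-order theory of $\Z$ is undecidable (Hilbert--Bernays \cite{HB}, Rosser \cite{Rosser}, or Robinson \cite{Rob1}); composing a decision procedure for the theory of $\oo_{\Qbar}$ with a first-order definition of $\Z$ inside $\oo_{\Qbar}$ would yield a decision procedure for the theory of $\Z$, a contradiction. Hence no uniform existential or purely universal definition of $\Z$ across all rings of integers of number fields can exist. The only point requiring a little care --- and the one I would flag --- is the complement step in the purely universal case, namely the identity $(\oo_{\Qbar}\setminus\Z)\cap\oo_L=\oo_L\setminus\Z$ (valid because $\Z\subset\oo_L$), which is precisely what allows a uniform purely universal definition of $\Z$ to be fed into Proposition \ref{thm:uni}; everything else is a routine specialization of the preceding corollary.
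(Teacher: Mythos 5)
Your proposal is correct and is exactly what the paper does: Corollary \ref{thm:nonu} is obtained by specializing the preceding corollary (and its proof, via Proposition \ref{thm:uni} with $K=\Q$, $F=\Qbar$, $A=\Z$ or $A=\oo_{\Qbar}\setminus\Z$) to $K=\Q$, then contrasting van den Dries's decidability of $\operatorname{Th}(\oo_{\Qbar})$ with the undecidability of $\operatorname{Th}(\Z)$. The complement step you flag is the same one used in the paper's proof of the preceding corollary, so nothing further is needed.
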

Finally we note that in view of the results above, the uniform definition of $\Z$ across all number fields we give in Theorem \ref{thm:main} is the best possible as far as the arithmetic hierarchy is concerned: the statement of that theorem would be false if we replaced  ``$\forall \forall\exists \ldots \exists$''   by ``$\exists \ldots \exists$".

\begin{bibdiv}

\begin{biblist}

\bib{CasVidVid}{article}{
      author={Castillo, Marianela},
      author={Vidaux, Xavier},
      author={Videla, Carlos~R.},
       title={Julia {R}obinson numbers and arithmetical dynamic of quadratic
  polynomials},
        date={2020},
        ISSN={0022-2518},
     journal={Indiana Univ. Math. J.},
      volume={69},
      number={3},
       pages={873\ndash 885},
        }

\bib{ChecFehm}{article}{
      author={Checcoli, Sara},
      author={Fehm, Arno},
       title={On the Northcott property and local degrees},
        journal={Proc. Amer. Math. Soc.},
   volume={149},
   date={2021},
   number={6},
   pages={2403--2414},
}
\bib{Church}{article}{ 
author={Church, Alonzo}, 
title={An unsolvable problem of elementary number theory},
journal={American Journal of Mathematics},
volume={58},
date={1936}, 
pages= {345 \ndash 363},
}
\bib{CPZ}{article}{
    author={Cornelissen, Gunther},
     author={Pheidas, Thanases},
      author={Zahidi, Karim},
       title={Division-ample sets and diophantine problem for rings of
 integers},
        date={2005},
    journal={Journal de Th\'eorie des Nombres Bordeaux},
     volume={17},
      pages={727\ndash 735},
}

\bib{Dav73}{article}{
    author={Davis, Martin},
      title={Hilbert's tenth problem is unsolvable},
       date={1973},
    journal={Amer. Math. Monthly},
     volume={80(3)},
      pages={233-269},
}

\bib{Den1}{article}{
    author={Denef, Jan},
      title={Hilbert's tenth problem for quadratic rings},
       date={1975},
    journal={Proc. Amer. Math. Soc.},
     volume={48},
      pages={214\ndash 220},
}

\bib{Den3}{article}{
      author={Denef, Jan},
       title={Diophantine sets of algebraic integers, {II}},
        date={1980},
     journal={Transactions of American Mathematical Society},
      volume={257},
      number={1},
       pages={227\ndash  236},
}

\bib{Den2}{article}{
      author={Denef, Jan},
      author={Lipshitz, Leonard},
       title={Diophantine sets over some rings of algebraic integers},
        date={1978},
     journal={Journal of London Mathematical Society},
      volume={18},
      number={2},
       pages={385\ndash 391},

}



\bib{EG}{book}{
   author={Evertse, Jan-Hendrik},
   author={Gy\H{o}ry, K\'{a}lm\'{a}n},
   title={Unit equations in Diophantine number theory},
   series={Cambridge Studies in Advanced Mathematics},
   volume={146},
   publisher={Cambridge University Press, Cambridge},
   date={2015},
    
}
\bib{Fried3}{incollection}{
      author={Fried, Michael~D.},
      author={Haran, Dan},
      author={V{\"o}lklein, Helmut},
       title={Real {H}ilbertianity and the field of totally real numbers},
        date={1994},
   booktitle={Arithmetic geometry (Tempe, AZ, 1993)},
      series={Contemp. Math.},
      volume={174},
   publisher={Amer. Math. Soc.},
     address={Providence, RI},
       pages={1\ndash 34},     
}

\bib{GFPast}{article}{
      author={Garcia-Fritz, Natalia},
      author={Pasten, Hector},
       title={Towards {H}ilbert's tenth problem for rings of integers through
  {I}wasawa theory and {H}eegner points},
        date={2020},
        ISSN={0025-5831},
     journal={Math. Ann.},
      volume={377},
      number={3-4},
       pages={989\ndash 1013},     
}

\bib{GilRan}{article}{
      author={Gillibert, Pierre},
      author={Ranieri, Gabriele},
       title={Julia {R}obinson numbers},
        date={2019},
        ISSN={1793-0421},
     journal={Int. J. Number Theory},
      volume={15},
      number={8},
       pages={1565\ndash 1599},
         }
\bib{G}{article}{
 author= {G\"{o}del, Kurt},
 title= {\"{U}ber formal unentscheidbare S\"{a}tze der Principia Mathematica und verwandter Systeme I},            
 journal={Monatshefte f\"{u}r Mathematik und Physik},
 volume={38}
 date={1931},
 pages={173 \ndash 198},
 }

\bib{Ko14}{incollection}{
      author={Koenigsmann, Jochen},
      title={Undecidability in number theory},
      date={2014},
      book={
    title={Model theory in algebra, analysis and arithmetic},
     series={Lecture Notes in Math.},
     volume={2111},
  publisher={Springer, Heidelberg},
    pages={159\ndash 195},},
       
}

\bib{MarUtrVid}{article}{
      author={Mart\'{\i}nez-Ranero, Carlos},
      author={Utreras, Javier},
      author={Videla, Carlos~R.},
       title={Undecidability of {$\mathbb{Q}^{(2)}$}},
        date={2020},
        ISSN={0002-9939},
     journal={Proc. Amer. Math. Soc.},
      volume={148},
      number={3},
       pages={961\ndash 964},
      }

\bib{MR}{article}{
    author={Mazur, Barry},
      author={Rubin, Karl},
      title={Ranks of twists of elliptic curves and {H}ilbert's {T}enth
 {P}roblem},
       date={2010},
    journal={Inventiones Mathematicae},
   volume={181},
     pages={541\ndash 575},
}
\bib{MR20}{article}{
   author={Mazur, Barry},
   author={Rubin, Karl},
   title={Big fields that are not large},
   journal={Proc. Amer. Math. Soc. Ser. B},
   volume={7},
   date={2020},
   pages={159--169},
  }
\bib{MRSh22}{misc}{
      title={Existential definability and diophantine stability}, 
      author={Mazur, B},
      author={Rubin, K}
      author={Shlapentokh, A}
      year={2022},
      eprint={2208.09963},
      archivePrefix={arXiv},
      primaryClass={math.NT}
}

\bib{MP18}{article}{
      author={Murty, M.~Ram},
      author={Pasten, Hector},
       title={Elliptic curves, {$L$}-functions, and {H}ilbert's tenth problem},
        date={2018},
        ISSN={0022-314X},
     journal={J. Number Theory},
      volume={182},
       pages={1\ndash 18},
}
\bib{Pas22}{article}{
author={Pasten, Hector},
title={Superficies elípticas y el décimo problema de Hilbert/Elliptic surfaces and Hilbert's tenth problem}
journal= {Rev. Mat. Teor. Apl.},
Volume={30}, 
date={2023},  
number={1},
pages={113 \ndash 120}
}

\bib{Ph1}{article}{
      author={Pheidas, Thanases},
       title={Hilbert's tenth problem for a class of rings of algebraic
  integers},
        date={1988},
     journal={Proceedings of American Mathematical Society},
      volume={104},
      number={2},
       pages={611\ndash 620},
}

\bib{Rob1}{article}{
      author={Robinson, Julia},
       title={Definability and decision problems in arithmetic},
        date={1949},
     journal={Journal of Symbolic Logic},
      volume={14},
       pages={98\ndash 114},
}

\bib{Rob3}{incollection}{
      author={Robinson, Julia},
       title={On the decision problem for algebraic rings},
        date={1962},
   booktitle={Studies in mathematical analysis and related topics},
   publisher={Stanford Univ. Press},
     address={Stanford, Calif},
       pages={297\ndash 304},
      }
\bib{Rosser}{article}{title={Extensions of some theorems of Gödel and Church}, volume={1}, DOI={10.2307/2269028}, number={3}, journal={Journal of Symbolic Logic}, publisher={Cambridge University Press}, author={Rosser, Barkley}, year={1936}, pages={87–91}}

\bib{Rum1}{article}{
      author={Rumely, Robert~S.},
       title={Arithmetic over the ring of all algebraic integers},
        date={1986},
     journal={J. Reine Angew. Math.},
      volume={368},
       pages={127\ndash 133},
}

\bib{Sha-Sh}{article}{
      author={Shapiro, Harold},
      author={Shlapentokh, Alexandra},
       title={Diophantine relations between algebraic number fields},
        date={1989},
     journal={Communications on Pure and Applied Mathematics},
      volume={XLII},
       pages={1113\ndash 1122},
}
\bib{Sh36}{article}{
    author={Shlapentokh, Alexandra},
       title={Diophantine definability and decidability in the extensions of
  degree 2 of totally real fields},
        date={2007},
     journal={Journal of Algebra},
      volume={313},
      number={2},
       pages={846\ndash 896},
}

\bib{Sh33}{article}{
      author={Shlapentokh, Alexandra},
       title={Elliptic curves retaining their rank in finite extensions and
  {H}ilbert's tenth problem for rings of algebraic numbers},
        date={2008},
        ISSN={0002-9947},
     journal={Trans. Amer. Math. Soc.},
      volume={360},
      number={7},
       pages={3541\ndash 3555},
     }
\bib{Sh37}{article}{
     author={Shlapentokh, Alexandra},
       title={Rings of algebraic numbers in infinite extensions of {$\mathbb Q$}
  and elliptic curves retaining their rank},
        date={2009},
        ISSN={0933-5846},
     journal={Arch. Math. Logic},
      volume={48},
      number={1},
       pages={77\ndash 114},
     }

\bib{Sh40}{article}{
     author={Shlapentokh, Alexandra},
       title={First-order decidability and definability of integers in infinite
  algebraic extensions of the rational numbers},
        date={2018},
        ISSN={0021-2172},
     journal={Israel J. Math.},
      volume={226},
      number={2},
       pages={579\ndash 633},
         }
\bib{Sh2}{article}{
      author={Shlapentokh, Alexandra},
       title={Extension of {Hilbert}'s tenth problem to some algebraic number
  fields},
        date={1989},
     journal={Communications on Pure and Applied Mathematics},
      volume={XLII},
       pages={939\ndash 962},
}
\bib{Sh02}{article}{
    AUTHOR = {Shlapentokh, Alexandra},
     TITLE = {Diophantine definability and decidability in large subrings of
              totally real number fields and their totally complex
              extensions of degree 2},
   JOURNAL = {J. Number Theory},
  FJOURNAL = {Journal of Number Theory},
    VOLUME = {95},
      YEAR = {2002},
    NUMBER = {2},
     PAGES = {227--252},            
}
\bib{Sh04}{article}{
    AUTHOR = {Shlapentokh, Alexandra},
     TITLE = {On {D}iophantine definability and decidability in some
              infinite totally real extensions of {$\Bbb Q$}},
   JOURNAL = {Trans. Amer. Math. Soc.},
  FJOURNAL = {Transactions of the American Mathematical Society},
    VOLUME = {356},
      YEAR = {2004},
    NUMBER = {8},
     PAGES = {3189--3207},
      
}

\bib{Sh94}{article} {
    AUTHOR = {Shlapentokh, Alexandra},
     TITLE = {Diophantine undecidability in some rings of algebraic numbers
              of totally real infinite extensions of {${\bf Q}$}},
   JOURNAL = {Ann. Pure Appl. Logic},
  FJOURNAL = {Annals of Pure and Applied Logic},
    VOLUME = {68},
      YEAR = {1994},
    NUMBER = {3},
     PAGES = {299--325},
 }
\bib{Sp}{article}{
      author={Springer, Caleb},
       title={Undecidability, unit groups, and some totally imaginary infinite
  extensions of {$\mathbb{Q}$}},
        date={2020},
        ISSN={0002-9939},
     journal={Proc. Amer. Math. Soc.},
      volume={148},
      number={11},
       pages={4705\ndash 4715},
}
\bib{Sp23}{article}{
 author={Springer, Caleb},
       title={Definability and decidability for rings of integers in totally imaginary fields},
       journal={Bulletin of LMS}
        date={2023},
      }
\bib{Tar48}{article}{
 author={Tarski, Alfred},
       title={A decision method for elementary Algebra and Geometry},
       journal={Rand Corporation report}
        date={1948},
      }

\bib{Dries4}{article}{
      author={van~den Dries, Lou},
       title={Elimination theory for the ring of algebraic integers},
        date={1988},
        ISSN={0075-4102},
     journal={J. Reine Angew. Math.},
      volume={388},
       pages={189\ndash 205},
}

\bib{VidVid1}{article}{
      author={Vidaux, Xavier},
      author={Videla, Carlos~R.},
       title={Definability of the natural numbers in totally real towers of
  nested square roots},
        date={2015},
        ISSN={0002-9939},
     journal={Proc. Amer. Math. Soc.},
      volume={143},
      number={10},
       pages={4463\ndash 4477},
}

\bib{VidVid2}{article}{
      author={Vidaux, Xavier},
      author={Videla, Carlos~R.},
       title={A note on the {N}orthcott property and undecidability},
        date={2016},
        ISSN={0024-6093},
     journal={Bull. Lond. Math. Soc.},
      volume={48},
      number={1},
       pages={58\ndash 62},
        }

\bib{V2}{article}{
      author={Videla, Carlos},
       title={Definability of the ring of integers in pro-$p$ extensions of
  number fields},
        date={2000},
     journal={Israel Journal of Mathematics},
      volume={118},
       pages={1\ndash 14},
}

\bib{V3}{article}{
      author={Videla, Carlos~R.},
       title={The undecidability of cyclotomic towers},
        date={2000},
        ISSN={0002-9939},
     journal={Proc. Amer. Math. Soc.},
      volume={128},
      number={12},
       pages={3671\ndash 3674},
         
}

\bib{V1}{article}{
      author={Videla, Carlos},
       title={On the constructible numbers},
        date={1999},
     journal={Proceedings of American Mathematical Society},
      volume={127},
      number={3},
       pages={851\ndash 860},
}
\bib{Vid89}{article}{
   author={Videla, Carlos},
   title={Sobre el d\'ecimo problema de {H}ilbert}, 
    date={1989}
   conference={ title={Atas da Xa Escola de Algebra}, address={Vitoria, ES, Brasil}},   
   book={
   series={Cole\c{c}\~ao Atas 16 da Sociedade Brasileira de Matem\'atica 1989},
      },
    pages={95 -- 108},
}

\end{biblist}
\end{bibdiv}

\end{document}